\documentclass{article}
\pdfoutput=1

\usepackage{graphicx,url,multirow}

\usepackage[utf8]{inputenc}
\usepackage{amsmath}
\usepackage{amssymb}
\usepackage{amsthm}
\usepackage[boxed]{algorithm}
\makeatletter
\renewcommand\fs@boxed{\def\@fs@cfont{\bfseries}\let\@fs@capt\floatc@plain
  \def\@fs@pre{\setbox\@currbox\vbox{\hbadness10000
    \vbox{\hrule\hbox to\hsize{\vrule\kern1pt\vbox{\kern3pt\box\@currbox\kern2pt}\kern-2pt\vrule}\hrule}}}
  \def\@fs@mid{\kern2pt}
  \def\@fs@post{}\let\@fs@iftopcapt\iffalse}
\makeatother
\usepackage[noend]{algorithmic}
\usepackage[all]{xy}
\usepackage{rotating}
\usepackage{enumerate}

\usepackage{color}
\usepackage{multicol}

\usepackage{natbib,url}

\usepackage{rotating}

\theoremstyle{definition}
\newtheorem{theorem}{Theorem}
\newtheorem{definition}[theorem]{Definition}
\newtheorem{example}[theorem]{Example}
\newtheorem{lemma}[theorem]{Lemma}
\newtheorem{corollary}[theorem]{Corollary}
\newtheorem{proposition}[theorem]{Proposition}

\newcommand{\N}{\mathbb{N}}
\newcommand{\Q}{\mathbb{Q}}
\newcommand{\UPTO}{\textbf{to~}}

 \newcommand{\greg}[1]{{#1}}
 \newcommand{\gregtwo}[1]{{#1}}
 \newcommand{\ADDITIONCOLOR}{}
 \newcommand{\jgd}[1]{{#1}}
 \newcommand{\jgdhol}[1]{{#1}}
 \newcommand{\jgdeig}[1]{{#1}}

\newcommand{\eqdef}{\stackrel{\text{def}}{=}}

\newcommand{\spatt}{{\sc SPatt}}
\newcommand{\linbox}{{\sc LinBox}}
\newcommand{\givaro}{{\sc Givaro}}
\newcommand{\mpfr}{{\sc Mpfr}}

\newcommand{\mytitle}{Sparse approaches for the exact distribution of patterns
  in long state sequences generated by a Markov source}
\newcommand{\nuelmail}{\href{mailto:Gregory.Nuel@ParisDescartes.fr}{Gregory.Nuel@ParisDescartes.fr}}
\newcommand{\dumasmail}{\href{mailto:Jean-Guillaume.Dumas@imag.fr}{Jean-Guillaume.Dumas@imag.fr}}
\title{\mytitle}
\author{Gregory \textsc{Nuel}\footnote{MAP5, UMR CNRS 8145,
    Department of Applied Mathematics, Paris Descartes University,
    France. \nuelmail}
\and Jean-Guillaume \textsc{Dumas}\footnote{ Laboratoire
  Jean Kuntzmann, UMR CNRS 5224, Universit\'e de Grenoble,
  Grenoble, France. \dumasmail}
}
\date{}

\usepackage{color,svgcolor}
 \makeatletter
 \usepackage{hyperref}
\hypersetup{
pdftitle={\mytitle},
pdfauthor={Gregory Nuel and Jean-Guillaume Dumas},
breaklinks=true,
colorlinks=true,
linkcolor=darkgreen,
citecolor=darkred,
urlcolor=darkblue,
}
 \makeatother

\begin{document}

\maketitle

 \begin{abstract}
   We present two novel approaches for the computation of the exact
   distribution of a pattern in a long sequence. \greg{Both approaches take
   into account the sparse structure of the problem and are two-part
   algorithms.}
 \jgd{The first approach relies on a partial recursion after a fast
   computation of the second largest
   eigenvalue of the transition matrix of a Markov chain
   embedding. The second approach uses fast Taylor expansions of an
   exact bivariate rational reconstruction of the distribution. }

   We illustrate the interest of both approaches on a simple
   toy-example and two biological applications: the transcription
   factors of the Human Chromosome 10 and the PROSITE signatures of
   functional motifs in proteins. On these example our methods
   demonstrate their complementarity and their ability to extend the
   domain of feasibility for exact computations in pattern problems to
   a new level.
 \end{abstract}


\section{Introduction}

The distribution of patterns in state random sequences
generated by a Markov source has many applications in a wide range
of fields including (but not limited to) reliability, insurance,
communication systems, pattern matching, or bioinformatics. In the
latter field in particular, the detection of statistically exceptional
DNA or protein patterns (PROSITE signatures, CHI motifs, regulation
patterns, binding sites, etc.) have been very successful allowing both
to confirm known biological signals as well as new discoveries. Here
follows a short selection of such work:
\citet{KBC92,Hel98,BJVU98,KBSG99,BFWCG00,Spouge02,HKB02,Spouge04}.

From the technical point of view, obtaining the distribution of a
pattern count in a state random sequence is a difficult problem
which has drawn a considerable interest from the probabilistic,
combinatorics and computer science community over the last fifty
years. Many concurrent approaches have been suggested, all of them
having their own strengths and weaknesses, and we give here only a short
selection of the corresponding references 
\citep[see][for more comprehensive reviews]{reignier,Lot05,nuel}.

Exact methods are based on a wide range of techniques like Markov
chain embedding, \greg{probability generating functions}, combinatorial methods,
or exponential families
\cite{Fu96,StP97,Ant01,Cha05,boeva,Nue06,StS07,boeva2}. There is also
a wide range of asymptotic approximations, the most popular among them
being: Gaussian approximations \cite{PBM89,Cow91,KlB97,Pru95}, Poisson
approximations \cite{God91,GGSS95,ReS99,Erh00} and Large deviation
approximations \cite{DRV01,Nue04}.

More recently, several authors
\citep{nicodeme,stefanov,lladser,Nue08,ribeca} pointed out the strong
connection of the problem to the theory of pattern matching by
providing the optimal Markov chain embedding of any pattern problem
through minimal Deterministic Finite state Automata (DFA). However,
this approach remains difficult to apply in practice when considering
high complexity patterns and/or long sequences.

In this paper, we want to address this problem by suggesting two
efficient ways to obtain the distribution of any pattern of interest
in a (possibly long) homogeneous state Markov sequence.

The paper is organized as follow. In the first part, we recall (in
Section~\ref{sec:dfa}) the
principles of optimal Markov chain embedding through DFA, as well as
the associated \greg{probability generating function (pgf)} formulas. 
We then (in Section~\ref{sec:partial}) present a new algorithm using
partial recursion formulas. 
The convergence of these partial recursion formulas depends on a (fast)
precomputation of the second largest eigenvalue of the transition
matrix of a Markov chain embedding. 
The next part (Section~\ref{sec:highorder}) takes advantage of
state-of-the-art results in exact computation 
to suggest a very efficient way to obtain the bivariate \greg{pgf} of the
problem through rational reconstruction.
\jgd{Once this involving precomputation has been performed,
fast Taylor expansions, using the high-order liftings of
\cite{Storjohann:2003:jsc},
can very quickly reveal the distribution of any pattern count.}
We then (in Section~\ref{sec:appli}) apply our new algorithms
successively to a simple toy-example, a selection of DNA
(Transcription Factors) patterns, and to protein motifs (PROSITE
signature). In all cases, the relative performance of the two
algorithms are presented and discussed, highlighting their strengths
and weaknesses. We conclude (in Section~\ref{sec:conclusion}) with
some perspectives of this work, including a table that summarizes
memory and time complexities.

\section{DFA and optimal Markov chain embedding}\label{sec:dfa}

\subsection{Automata and languages}\label{sec:automata_languages}

In this part we recall some classical definitions and results
of the well known theory of languages and automata \citep{HMU01}.

We consider $\mathcal{A}$ a \emph{finite alphabet}
whose elements are called \emph{letters}.  A \emph{word} (or
\emph{sequence}) over $\mathcal{A}$ is a sequence of letters and a
\emph{language} over $\mathcal{A}$ is a set of words (finite or
not). We denote by $\varepsilon$ the \emph{empty word}.  For example
${\tt ABBABA}$ is a word over the binary alphabet $\mathcal{A}=\{{\tt
  A},{\tt B}\}$ and $\mathcal{L}=\{{\tt AB},{\tt ABBABA},{\tt
  BBBBB}\}$ is a (finite) language over $\mathcal{A}$.

The \emph{product} $\mathcal L_1 \cdot \mathcal L_2$ (the dot could be
omitted) of two languages is the language $\{w_1w_2, w_1 \in \mathcal
L_1, w_2 \in \mathcal L_2\}$ where $w_1w_2$ is the concatenation -- or
product -- of $w_1$ and $w_2$.  If $\mathcal L$ is a language,
$\mathcal L^n=\{w_1\ldots w_n \text{ with }$ $w_1,\ldots w_n \in
\mathcal L\}$ and the \emph{star closure} of $\mathcal L$ is defined
by $\mathcal L^* = \cup_{n \geqslant 0} \mathcal L^n$.  The language
$\mathcal{A}^*$ is hence the set of all possible words over
$\mathcal{A}$.  For example we have $\{{\tt AB}\} \cdot \{{\tt
  ABBABA},{\tt BBBBB}\}= \{{\tt ABABBABA},{\tt ABBBBBB}\}$; $\{{\tt
  AB}\} ^3 =\{{\tt ABABAB}\}$ and $\{{\tt AB}\}^*=\{\varepsilon,{\tt
  AB},{\tt ABAB},{\tt ABABAB},{\tt ABABABAB}\ldots\}$.

A \emph{regular language} is either the empty word, or a single
letter, or obtained by a finite number of \emph{regular operations}
(namely: union, product and star closure). A finite sequence of
regular operations describing a regular language is called a
\emph{regular expression} whose size is defined as its number of
operations. $\mathcal{A}^*$ is regular.  Any finite language is
regular.

\begin{definition}
  If $\mathcal{A}$ is a {finite alphabet}, $\mathcal{Q}$ is a {finite set of
    states}, $\sigma \in \mathcal Q$ is a {starting state}, $\mathcal F
  \subset \mathcal Q$ is a {subset of final states} and $\delta: \mathcal
  Q \times \mathcal A \rightarrow \mathcal Q$ is a {transition function},
  then $(\mathcal{A},\mathcal{Q},\sigma,\mathcal{F},\delta)$ is a
  \emph{Deterministic Finite Automaton} (DFA).  For all
  $a_1^d=a_1\ldots a_{d-1}a_d \in \mathcal{A}^d$ ($d \geqslant 2$) and
  $q\in \mathcal{Q}$ we recursively define $\delta(q,a_1^d)=\delta(
  \delta(q,a_1^{d-1}) ,a_d)$.  A word $w \in \mathcal{A}^h$
  is \emph{accepted} (or \emph{recognized}) by the DFA if
  $\delta(\sigma,w) \in \mathcal{F}$. The set of all words accepted by a
  DFA is called its language. See in Figure~\ref{fig:dfa_w1} a
  graphical representation of a DFA.
\end{definition}

\begin{figure}
  \begin{center}
    \includegraphics[width=1.0\textwidth,trim=50 0 0 0]{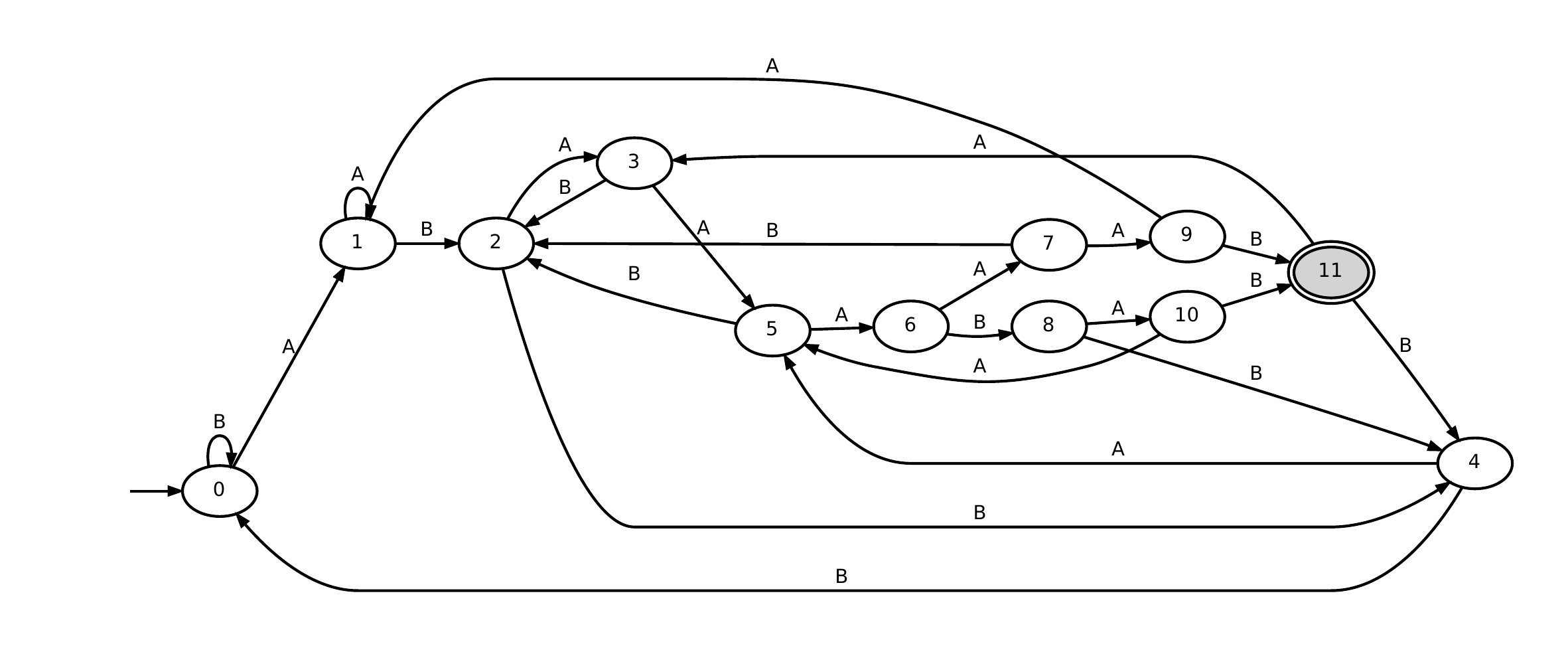}
  \end{center}
  \caption{Graphical representation of the DFA
    $(\mathcal{A},\mathcal{Q},\sigma,\mathcal{F},\delta)$ with
    $\mathcal{A}=\{{\tt A},{\tt B}\}$,
    $\mathcal{Q}=\{0,1,2,\ldots,10,11\}$, $\sigma=0$, $\mathcal{F}=\{11\}$
    and $\delta(0,{\tt A})=1$, $\delta(0,{\tt B})=0$,
    $\delta(1,{\tt A})=1$, $\delta(1,{\tt B})=2$,
    $\delta(2,{\tt A})=3$, $\delta(2,{\tt B})=4$,
    $\delta(3,{\tt A})=5$, $\delta(3,{\tt B})=1$,
    $\delta(4,{\tt A})=5$, $\delta(4,{\tt B})=0$,
    $\delta(5,{\tt A})=6$, $\delta(5,{\tt B})=2$,
    $\delta(6,{\tt A})=7$, $\delta(6,{\tt B})=8$,
    $\delta(7,{\tt A})=9$, $\delta(7,{\tt B})=2$,
    $\delta(8,{\tt A})=10$, $\delta(8,{\tt B})=4$,
    $\delta(9,{\tt A})=1$, $\delta(9,{\tt B})=11$,
    $\delta(10,{\tt A})=5$, $\delta(10,{\tt B})=11$,
    $\delta(11,{\tt A})=3$ and $\delta(11,{\tt B})=4$.  This DFA is
    the smallest one that recognizes the language
    $\mathcal{L}=\mathcal{A}^*\mathcal{W}_1$ with
    $\mathcal{A}=\{{\tt A},{\tt B}\}$, $\mathcal{W}_1 =
    {\tt AB}\mathcal A^1 {\tt AA} \mathcal A^1 {\tt AB}$ and hence
    $|\mathcal{W}_1|=4$.  }\label{fig:dfa_w1}
\end{figure}

We can now give the most important result of this part which is a
simple application of the classical Kleene and Rabin \& Scott theorems
\citep{HMU01}:
\begin{theorem}\label{thm:smallest_DFA}
  For any regular language $\mathcal{L}$ there exists a unique (up to
  a unique isomorphism) smallest DFA whose language is
  $\mathcal{L}$. If we denote by $E$ the size of the regular
  expression corresponding to $\mathcal{L}$, then the size $R$ of the
  smallest DFA is bounded by $2^E$.
\end{theorem}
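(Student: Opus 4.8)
The plan is to treat the statement as two essentially independent claims: existence and uniqueness of a smallest DFA, and the exponential size bound $R \leqslant 2^{E}$. For the first claim I would run the Myhill--Nerode argument. Define on $\mathcal{A}^*$ the right congruence $u \equiv_{\mathcal{L}} v$ iff $uw \in \mathcal{L} \Leftrightarrow vw \in \mathcal{L}$ for every $w \in \mathcal{A}^*$. Since Kleene's theorem together with the subset construction of Rabin \& Scott (the classical results the statement refers to) guarantee that $\mathcal{L}$ is the language of \emph{some} DFA $(\mathcal{A},\mathcal{Q},\sigma,\mathcal{F},\delta)$, the map $u \mapsto \delta(\sigma,u)$ factors through $\equiv_{\mathcal{L}}$, so $\equiv_{\mathcal{L}}$ has at most $|\mathcal{Q}|$ classes, in particular finite index. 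Then I would build the canonical automaton $\mathcal{D}_{\mathcal{L}}$ whose states are the classes $[u]$, with start state $[\varepsilon]$, transition $\delta([u],a)=[ua]$, and final states $\{[u] : u \in \mathcal{L}\}$; these are well defined precisely because $\equiv_{\mathcal{L}}$ is a right congruence refining membership in $\mathcal{L}$, and an immediate induction on word length shows $\mathcal{D}_{\mathcal{L}}$ recognizes $\mathcal{L}$.

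For minimality and uniqueness I would show $\mathcal{D}_{\mathcal{L}}$ is a quotient of every DFA for $\mathcal{L}$. Given an arbitrary such DFA, first discard the states unreachable from $\sigma$ (this changes neither the language nor anything relevant), then collapse any two states accepting the same set of suffixes. The resulting trimmed, reduced automaton maps onto $\mathcal{D}_{\mathcal{L}}$ by sending the state reached on input $u$ to $[u]$: this map is well defined by construction, surjective because $[u]$ is hit by reading $u$, and injective because two reachable states with the same suffix behaviour have already been identified — so it is an isomorphism of DFAs, carrying $\sigma$ to $[\varepsilon]$ and final states to final states. Hence any two trimmed reduced DFAs for $\mathcal{L}$ are isomorphic, the isomorphism being forced (uniqueness up to a unique isomorphism), and since neither trimming nor collapsing ever increases the number of states, $\mathcal{D}_{\mathcal{L}}$ is the smallest.

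For the size bound I would pass through a nondeterministic intermediate. By induction on the structure of the regular expression for $\mathcal{L}$, build a finite automaton $\mathcal{N}$ accepting $\mathcal{L}$ with at most $E$ states: the base cases (the empty word, a single letter) cost one state, and each of union, product and star closure can be realised by adding at most one new state to the automata already built for the sub-expressions (a position/Glushkov-style construction, or a carefully pruned Thompson construction, avoids the proliferation of $\varepsilon$-transitions that would otherwise cost a constant factor). Applying the Rabin \& Scott subset construction to $\mathcal{N}$ yields a DFA whose states are subsets of the state set of $\mathcal{N}$, hence at most $2^{E}$ of them, recognizing $\mathcal{L}$; combined with the first part this gives $R \leqslant 2^{E}$.

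The routine parts are the well-definedness checks in the Myhill--Nerode construction and the three inductive steps producing $\mathcal{N}$. The genuinely delicate point — the main obstacle — is pinning the number of states of $\mathcal{N}$ down to $E$ rather than merely $O(E)$: this forces one to fix a precise meaning for ``size of the regular expression'' and to pick an NFA construction tight enough to match it, since the naive Thompson construction gives $2E+1$ states and hence only the weaker bound $R \leqslant 2^{2E+1}$. If one is content with a bound of the form $2^{O(E)}$ this subtlety disappears and the whole argument is bookkeeping.
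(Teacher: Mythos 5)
Your proposal is correct and follows exactly the route the paper itself takes: the paper gives no proof of this theorem but presents it as "a simple application of the classical Kleene and Rabin \& Scott theorems" of \citet{HMU01}, and your argument (Myhill--Nerode for existence and uniqueness of the minimal DFA, a regular-expression-to-NFA construction followed by the Rabin--Scott subset construction for the $2^E$ bound) is precisely that classical package written out in full. Your closing caveat about pinning the NFA down to $E$ rather than $O(E)$ states is well taken --- the paper's definition of $E$ as the "number of operations" makes the stated bound $2^E$ somewhat informal, and the paper only ever uses it as a worst-case order of magnitude that is "seldom reached in practice".
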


\greg{
For certain specific patterns (e.g. $\mathcal{A}^*w$ where $w$ is a
simple word), a minimal DFA can be built directly using ad hoc
construction or well-known algorithms \citep[e.g. Aho-Corasick
algorithm,][]{aho1975efficient}. In general however, building a
minimal DFA from a regular expression usually requires three steps:
\begin{enumerate}[1)]
\item turning the regular expression into a
Nondeterministic Finite Automaton -- NFA -- \citep[Thompson's or Glushkov's algorithm,][]{allauzen2006unified};
\item producing a DFA from the NFA \citep[determinization; Subset construction, see][Section 2.3.5]{HMU01};
\item minimizing the DFA
by removing unnecessary states \citep[for minimization; Hopcroft's algorithm, see][Section 4.4.3]{hopcroft1971n,HMU01}.
\end{enumerate}
}
For instance, The
\spatt\footnote{\url{http://www.mi.parisdescartes.fr/~nuel/spatt}}
software allows to compute these DFA from regular expressions.

Now, as stated in Theorem~\ref{thm:smallest_DFA}, the smallest DFA may have
a total of $2^E$ states in the worse case. However, this upper bound
is seldom reached in practice. This may be observed in Table
\ref{tab:Wk} where the practical value of $R$ is far below the upper
bound.

\begin{table}
  {\footnotesize \setlength\arraycolsep{2pt}
  $$
  \begin{array}{|c|cccccccccc|}
    \hline
    k & 1 & 2 & 3 & 4 & 5 & 6 & 7 & 8 & 9 & 10 \\
    \hline
    |\mathcal{W}_k| & 4 & 16 & 64 & 256 & 1\,024  & 4\,096 & 16\,384 & 65\,536 & 262\,144 & 1\,048\,576 \\

    2^E & 512 &    2\,048 &     8\,192 &    32\,768&    1.3 \times 10^5&    5.2\times 10^5 &  2.1 \times 10^6
   & 8.4 \times 10^6 & 3.4 \times 10^7 & 1.3 \times 10^8 \\ 
    R & 12 & 27 & 57 & 122 & 262 & 562 &  1\,207 & 2\,592 & 5\,567 & 11\,957 \\
    F & 1 & 3 & 6 & 13 & 28 & 60 & 129 & 277 & 595 & 1\,278 \\
    \hline
  \end{array}
  $$
}
  \caption{Characteristics of the smallest DFA that recognizes the language $\mathcal{L}=\mathcal{A}^* \mathcal{W}_k$ with $\mathcal{A}=\{{\tt A},{\tt B}\}$ and $\mathcal{W}_k = {\tt AB}\mathcal A^k  {\tt AA} \mathcal A^k {\tt AB}$. The pattern cardinality is $|\mathcal{W}_k|=2^k \times 2^k = 4^k$, $R$ is the total number of states, $F$ the number of final states, and $2^E=2^{7+2k}$ is the theoretical upper bound of $R$.}\label{tab:Wk}
\end{table}

\greg{One should note that the complexity $R$ of real-life patterns is
  quite different from one problem to another. For example in
  \citet{gregory2010exact}, the authors consider a total of  $1,276$
  protein signatures from the PROSITE database, for which complexities
  range from $R=22$ ({\tt RGD} motif) to $R=837,507$ (APPLE motif,
  PS00495), with a mode around $R=100$.}

\subsection{Connection with patterns}\label{sec:connection_pattern}

We call \emph{pattern} (or \emph{motif}) over the finite alphabet
$\mathcal{A}$ any regular language (finite or not) over the same
alphabet.  For any pattern $\mathcal{W}$ any DFA that recognizes the
regular language $\mathcal{A}^* \mathcal{W}$ is said to be
\emph{associated} with $\mathcal{W}$. According to Theorem
\ref{thm:smallest_DFA}, there exists a unique (up to unique isomorphism)
smallest DFA associated with a given pattern. For example, if we work
with the binary alphabet $\mathcal{A}=\{{\tt A},{\tt B}\}$, then the
smallest DFA associated with Pattern $\mathcal{W}_1={\tt
  AB}\mathcal A^1 {\tt AA} \mathcal A^1 {\tt AB}$ has $R=12$ states
and $F=1$ final state (see Figure~\ref{fig:dfa_w1}), and the smallest
DFA associated  with Pattern $\mathcal{W}_2={\tt
  AB}\mathcal A^2 {\tt AA} \mathcal A^2 {\tt AB}$ has $R=27$ states
and $F=3$ final states (see Figure~\ref{fig:dfa_w2}).

\begin{figure}
  \begin{center}
    \includegraphics[width=\textwidth,trim=50 0 0 0]{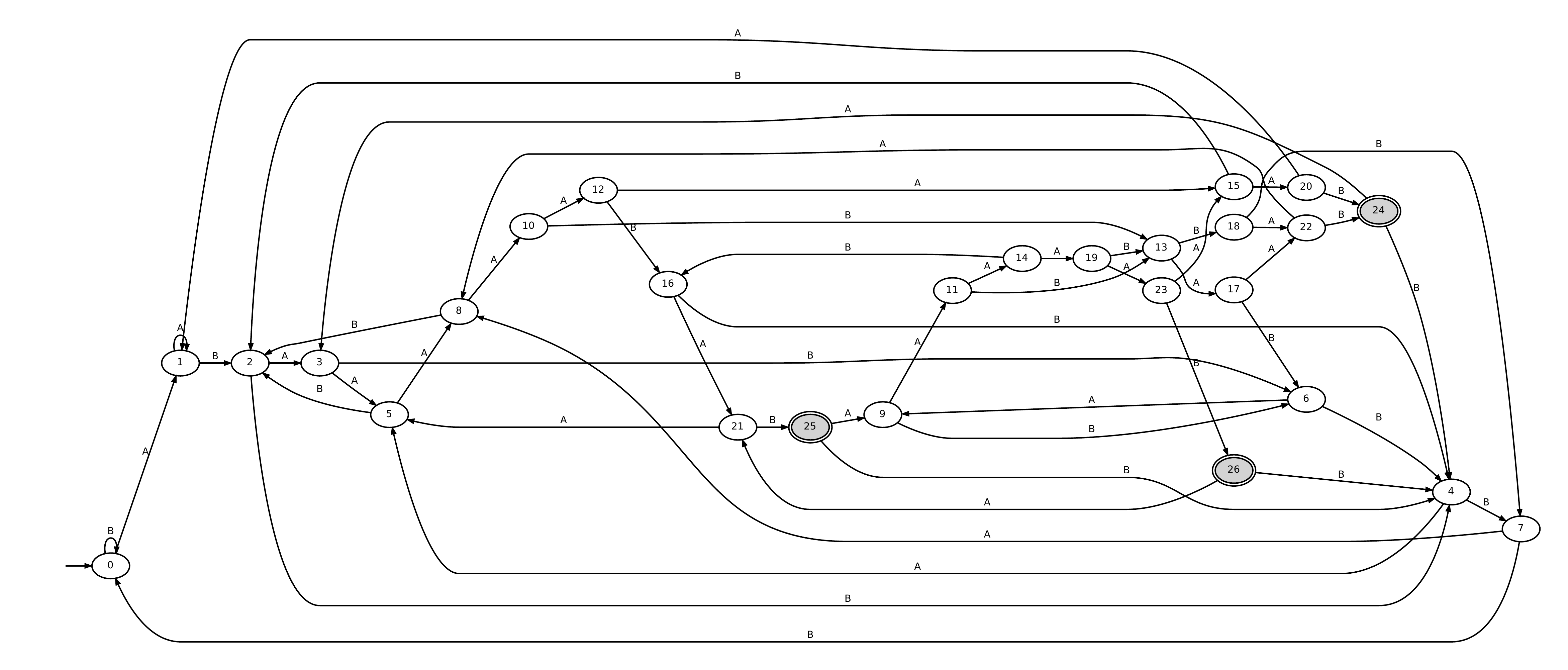}
  \end{center}
  \caption{ Graphical representation of the smallest DFA associated
    with $\mathcal{W}_2={\tt AB}\mathcal{A}^2{\tt AA}\mathcal{A}^2{\tt
      AB}$ with $\mathcal{A}=\{{\tt A},{\tt B}\}$.  This DFA has
    $R=27$ states including $F=3$ final states.}\label{fig:dfa_w2}
\end{figure}

It is well known from the pattern matching theory \citep{CLR90,CH97}
that such a DFA provides a simple way to find all occurrences of the
corresponding pattern in a sequence.

\begin{proposition}\label{prop:dfa_count}
  Let $\mathcal{W}$ be a pattern over the finite alphabet
  $\mathcal{A}$ and
  $(\mathcal{A}$, $\mathcal{Q}$, $\sigma$, $\mathcal{F}$, $\delta)$ be a DFA that is
  associated to $\mathcal{W}$. For each deterministic sequence
  $x_1^\ell=x_1 x_2 \ldots x_\ell$ over $\mathcal{A}$, we recursively
  define the sequence $y_0^\ell=y_0 y_1 \ldots y_\ell$ over
  $\mathcal{Q}$ by $y_0=\sigma$ and $y_i=\delta(y_{i-1},x_i)$. For all
  $1 \leqslant i \leqslant \ell$ we then have the following
  property\footnote{$x_1^i \in \mathcal{A}^* \mathcal{W}$ means that
    an occurrence of $\mathcal{W}$ ends in position $i$ in
    $x_1^\ell$.}: $x_1^i \in \mathcal{A}^* \mathcal{W} \iff y_i \in
  \mathcal{F}$.
\end{proposition}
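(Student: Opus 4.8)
The plan is to reduce the statement to the defining property of acceptance by a DFA; the only real work is to identify the iterated sequence $y_0^\ell$ with the extended transition function $\delta$ applied to the prefixes of $x_1^\ell$.

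First I would prove by induction on $i$ that $y_i = \delta(\sigma, x_1^i)$ for every $1 \leqslant i \leqslant \ell$. For the base case $i=1$ we have $y_1 = \delta(y_0, x_1) = \delta(\sigma, x_1)$, which is exactly $\delta(\sigma, x_1^1)$ since on a one-letter word the extended transition function coincides with $\delta$ itself. For the inductive step, assume $y_{i-1} = \delta(\sigma, x_1^{i-1})$ for some $i \geqslant 2$; then using the recursive definition of $y_i$ together with the recursive definition of $\delta$ on words (applied with $d = i \geqslant 2$),
\[
y_i = \delta(y_{i-1}, x_i) = \delta\bigl(\delta(\sigma, x_1^{i-1}), x_i\bigr) = \delta(\sigma, x_1^{i-1} x_i) = \delta(\sigma, x_1^i),
\]
which closes the induction.

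Next I would invoke the definition of the language of a DFA: a word $w$ is accepted iff $\delta(\sigma, w) \in \mathcal{F}$. Applying this with $w = x_1^i$ and substituting the identity just established, $x_1^i$ is accepted by the DFA iff $y_i \in \mathcal{F}$. Finally, since the DFA is by hypothesis associated with $\mathcal{W}$, its language is $\mathcal{A}^* \mathcal{W}$, so ``accepted by the DFA'' is the same condition as ``$x_1^i \in \mathcal{A}^* \mathcal{W}$'', which yields the claimed equivalence. The footnote's reading of $x_1^i \in \mathcal{A}^* \mathcal{W}$ as ``an occurrence of $\mathcal{W}$ ends at position $i$'' is then just the observation that a prefix $x_1^i$ factors as a word of $\mathcal{A}^*$ followed by a word of $\mathcal{W}$ precisely when such a suffix occurrence is present.

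There is essentially no hard step here: the argument is a bookkeeping induction together with two unfoldings of definitions (acceptance, and ``associated DFA''). The only point requiring a little care is the base case of the induction, since the recursion for $\delta$ on words in the Definition is stated only for $d \geqslant 2$, so the single-letter prefix $x_1^1$ must be handled separately — which is immediate.
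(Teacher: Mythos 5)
Your proposal is correct and follows exactly the route of the paper's own proof: an induction showing $y_i=\delta(\sigma,x_1^i)$, followed by unfolding the definitions of acceptance and of a DFA associated to $\mathcal{W}$. You merely spell out the induction (including the $d\geqslant 2$ base-case caveat) that the paper leaves as ``easily shown''.
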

\begin{proof}
  Since the DFA is associated to $\mathcal{W}$, $x_1^i \in
  \mathcal{A}^* \mathcal{W}$ is equivalent to $\delta(\sigma,x_1^i)
  \in \mathcal{F}$. One can then easily show by induction that
  $\delta(\sigma,x_1^i)=y_i$ and the proof is achieved.
\end{proof}

\begin{example}\label{ex:1}
  Let us consider the pattern $\mathcal{W}_1 = {\tt AB}\mathcal A^1
  {\tt AA} \mathcal A^1 {\tt AB}$ over the binary alphabet
  $\mathcal{A}=\{{\tt A},{\tt B}\}$. Its smallest associated DFA is
  represented in Figure~\ref{fig:dfa_w1}. If $x_1^{20}={\tt
    ABAAABBAAAABBAABABAB}$ is a binary sequence, we build the sequence
  $y_0^{20}$ according to Proposition~\ref{prop:dfa_count} and we get:
{\setlength{\arraycolsep}{4.5pt}
$$
  \begin{array}{*{22}c}
    x_1^{20} = & - & {\tt A} & {\tt B} & {\tt A} & {\tt A} & {\tt A} 
    & {\tt B} & {\tt B} & {\tt A} & {\tt A} & {\tt A} 
    & {\tt A} & {\tt B} & {\tt B} & {\tt A} & {\tt A}
    & {\tt B} & {\tt A} & {\tt B} & {\tt A} & {\tt B}\\
    \hline
    y_0^{20} = & 0 &1&2&3&5&6&8&4&5&6&7&9&{\bf 11}&4&5&6&8&10&{\bf 11}&3&2
  \end{array}
  $$}
  where final states are in bold.  We hence see two occurrences of
  $\mathcal{W}_1$: one ending in position 12 (${\tt ABBAAAAB}$) and
  one in position 18 (${\tt ABBAABAB}$, which overlaps the previous
  occurrence).
\end{example}

If this approach may be useful to localize pattern occurrences in
deterministic sequences, one should note that NFA (Nondeterministic
Finite Automata) are usually preferred over DFA for such a task since
they are far more memory efficient and can achieve similar speed
thanks to lazy determinization \citep{le2011regular}. The DFA-based approach
however has a great advantage when we work with random sequences.

\subsection{Markov chain embedding}

\greg{Let $X_1^\ell$ be a homogeneous\footnote{\greg{Please note that Theorem~\ref{thm:dfa} can be easily generalized to heterogeneous Markov chains, but we focus here on the simpler case since our computational approaches are only valid for homogeneous Markov chains.}} order $m \geqslant 1$ Markov chain over $\mathcal{A}$ whose starting distribution and transition matrix are given for all $(a,b) \in \mathcal{A}^m \times \mathcal{A}$  by $\mu(a)\eqdef\mathbb{P}(X_1^m=a)$ and $\pi(a,b)\eqdef\mathbb{P}(X_i=b | X_{i-m}^{i-1}=a)$. Let now $\mathcal{W}$ be a regular expression over $\mathcal{A}$, our aim being to obtain the distribution of the random number of occurrences of $\mathcal{W}$ in $X_1^\ell$ defined\footnote{\greg{For simplification, we deliberately ignore possible occurrences of $\mathcal{W}$ in $X_1^ m$.}} by:
\begin{equation}
N_\ell\eqdef\sum_{i=m+1}^{\ell} \mathbf{1}_{\{X_1^i \in \mathcal{A}^*\mathcal{W}\}}
\end{equation}
where $\mathbf{1}_\mathcal{E}$ is the indicatrix function of event $\mathcal{E}$ (the function takes the value $1$ is $\mathcal{E}$ is true, $0$ else).}

Let $(\mathcal{A},\mathcal{Q},\sigma,\mathcal{F},\delta)$ be a
\emph{minimal} DFA associated to $\mathcal{W}$. We additionally assume
that this automaton is non $m$-ambiguous \citep[a DFA having this property
is also called an $m$-th order DFA in][]{lladser} which means that
for all $q \in \mathcal{Q}$,
$\delta^{-m}(p)\stackrel{\text{def}}{=}\left\{a_1^m \in
  \mathcal{A}_1^m, \exists p \in \mathcal{Q},
  \delta\left(p,a_1^m\right)=q\right\}$ is either a singleton, or the
empty set. When the notation is not ambiguous, $\delta^{-m}(p)$ may
also denote its unique element (singleton case). We then have the
following result:
\begin{theorem}\label{thm:dfa}
	  We consider the random sequence over $\mathcal{Q}$ defined by
  $Y_0\stackrel{\text{def}}{=}\sigma$ and
  $Y_i\stackrel{\text{def}}{=}\delta(Y_{i-1},X_i)$,
  $\forall i,1 \leqslant i \leqslant \ell$. Then $(Y_i)_{i
    \geqslant m}$ is a homogeneous order 1 Markov chain over
  $\mathcal{Q}'\stackrel{\text{def}}{=}\delta(\greg{ \sigma},\mathcal{A}^m\mathcal{A}^*)$
   such that, for all
  $p,q \in \mathcal{Q}'$ and $1 \leqslant i\leqslant \ell-m$, the
  starting vector $\greg{\mathbf{u}_p}\stackrel{\text{def}}{=}\mathbb{P}\left(
    Y_m=p \right)$ and the transition matrix
  $\greg{\mathbf{T}_{p,q}}\stackrel{\text{def}}{=}\mathbb{P}\left(
    Y_{i+m}=q | Y_{i+m-1}=p \right)$ are given
  by:
\begin{equation}
\greg{\mathbf{u}_p}=\left\{
\begin{array}{ll}
  \mu\left(\delta^{-m}(p) \right) & \text{if $\delta^{-m}(p)\neq\emptyset$}\\
  0 & \text{otherwise}\\
\end{array}
\right.;
\end{equation}
\begin{equation}
\greg{\mathbf{T}_{p,q}}=\left\{
\begin{array}{ll}
  \pi\left(\delta^{-m}(p),b\right)
  & \text{if $\exists b \in \mathcal{A},\delta(p,b)=q$}\\
  0 & \text{otherwise}
\end{array}
\right.
\end{equation}
and we have the following
  property:
\begin{equation}
X_1^i \in \mathcal{A}^* \mathcal{W} \iff Y_i \in
  \mathcal{F}.
\end{equation}
\end{theorem}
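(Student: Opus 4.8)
The plan is to isolate one structural fact about non-$m$-ambiguous DFA and then combine it mechanically with the order-$m$ Markov property of $X_1^\ell$. First I would record the elementary induction $Y_i=\delta(\sigma,X_1^i)$ for $1\leqslant i\leqslant\ell$ (exactly the induction used in the proof of Proposition~\ref{prop:dfa_count}); this holds for every realization, so the equivalence $X_1^i\in\mathcal{A}^*\mathcal{W}\iff Y_i\in\mathcal{F}$ is just Proposition~\ref{prop:dfa_count} read pathwise. Moreover, for $i\geqslant m$ we have $X_1^i\in\mathcal{A}^m\mathcal{A}^*$, hence $Y_i=\delta(\sigma,X_1^i)\in\delta(\sigma,\mathcal{A}^m\mathcal{A}^*)=\mathcal{Q}'$, so $(Y_i)_{i\geqslant m}$ indeed lives on $\mathcal{Q}'$. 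What remains is the Markov property, homogeneity, and the formulas for $\mathbf{u}$ and $\mathbf{T}$.

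The conceptual heart is the following claim: for $i\geqslant m$ and any state $p$, on the event $\{Y_i=p\}$ the last $m$ letters are forced, namely $X_{i-m+1}^{i}=\delta^{-m}(p)$ (in particular $\delta^{-m}(p)\neq\emptyset$, so by non-$m$-ambiguity it is the promised singleton). Indeed, if $x_1^i$ is any word with $\delta(\sigma,x_1^i)=p$ and $i\geqslant m$, then setting $p'=\delta(\sigma,x_1^{i-m})$ gives $\delta(p',x_{i-m+1}^{i})=p$, so $x_{i-m+1}^{i}\in\delta^{-m}(p)$; since that set is non-empty it is a singleton and $x_{i-m+1}^{i}$ is its unique element. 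This is short, but it is exactly the step that uses the non-$m$-ambiguity hypothesis, and getting the ``singleton versus empty set'' bookkeeping right here is the one place to be careful.

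With the claim in hand, fix $j$ with $m\leqslant j\leqslant\ell-1$ and a tuple $(p_m,\ldots,p_j)$ of positive probability, and abbreviate $p=p_j$. On the conditioning event $\{Y_m=p_m,\ldots,Y_j=p_j\}$ one has $Y_{j+1}=\delta(Y_j,X_{j+1})=\delta(p,X_{j+1})$, so
\[
\mathbb{P}\!\left(Y_{j+1}=q\mid Y_m=p_m,\ldots,Y_j=p_j\right)=\sum_{b\in\mathcal{A}:\,\delta(p,b)=q}\mathbb{P}\!\left(X_{j+1}=b\mid Y_m=p_m,\ldots,Y_j=p_j\right).
\]
The conditioning event is a union of trajectories $x_1^j$ with $\delta(\sigma,x_1^i)=p_i$ for $m\leqslant i\leqslant j$; on each of them the order-$m$ Markov property gives $\mathbb{P}(X_{j+1}=b\mid X_1^j=x_1^j)=\pi(x_{j-m+1}^j,b)$, and by the claim $x_{j-m+1}^j=\delta^{-m}(p)$ is the \emph{same} word for all of them. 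Hence each summand collapses to $\pi(\delta^{-m}(p),b)$ and the whole conditional probability depends only on $(p,q)$ — which at once yields the order-$1$ Markov property, homogeneity (no dependence on $j$), and the stated $\mathbf{T}$, provided one also checks there is at most one $b$ with $\delta(p,b)=q$: since $q$ is then reached in $\geqslant m+1$ steps, $\delta^{-m}(q)$ is a singleton, and appending $b$ resp. $b'$ to any length-$\geqslant m$ word reaching $p$ gives two length-$\geqslant m$ words reaching $q$ whose length-$m$ suffixes both equal $\delta^{-m}(q)$, forcing $b=b'$. The anticipated main obstacle is precisely this conditioning-and-factoring step, i.e. handling the conditional expectations rigorously rather than any deep combinatorics.

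Finally, the starting vector is the same computation run once more with $i=m$: $\mathbb{P}(Y_m=p)=\sum_{a\in\mathcal{A}^m:\,\delta(\sigma,a)=p}\mu(a)$, and by the claim this sum has at most the single term $a=\delta^{-m}(p)$, so $\mathbf{u}_p=\mu(\delta^{-m}(p))$ when $p$ is actually reached at time $m$ and $\mathbf{u}_p=0$ otherwise, matching the displayed formula. Assembling the three pieces — the occurrence equivalence, the transition/Markov/homogeneity statement, and the starting vector — completes the proof.
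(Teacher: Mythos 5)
Your proof is correct and is essentially the fully written-out version of the argument the paper itself compresses into ``immediate from the DFA properties and Proposition~\ref{prop:dfa_count}'' with a pointer to \cite{lladser} and \cite{Nue08}: the pathwise identity $Y_i=\delta(\sigma,X_1^i)$, the suffix-forcing consequence of non-$m$-ambiguity, and the conditioning-over-trajectories collapse are exactly the intended ingredients, and your extra check that the letter $b$ with $\delta(p,b)=q$ is unique is a point the paper glosses over but that is needed for the displayed formula for $\mathbf{T}_{p,q}$ to be well posed. The only place where your (correct) derivation and the displayed statement do not literally coincide is the starting vector: you obtain $\mathbf{u}_p=\mu(\delta^{-m}(p))$ precisely when $p=\delta(\sigma,\delta^{-m}(p))$, i.e.\ when $p$ is reachable from $\sigma$ in exactly $m$ steps, which is in principle a stronger condition than the theorem's ``$\delta^{-m}(p)\neq\emptyset$''; your phrasing is the right one, and this is a (harmless) imprecision of the theorem statement rather than a gap in your proof.
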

\begin{proof}
  The result is immediate considering the properties of the DFA and Proposition~\ref{prop:dfa_count}. See
  \cite{lladser} or \cite{Nue08} for more details.
\end{proof}

\greg{From now on, we consider the decomposition $\mathbf{T}=\mathbf{P}+\mathbf{Q}$ where for all $p,q \in \mathcal{Q}'$ we have:
\begin{equation}
\mathbf{P}_{p,q}=\left\{
\begin{array}{ll}
\mathbf{T}_{p,q} & \text{if $q \notin \mathcal{F}$} \\
0 & \text{if $q \in \mathcal{F}$} \\
\end{array}
\right.
\quad\text{and}\quad
\mathbf{Q}_{p,q}=\left\{
\begin{array}{ll}
0 & \text{if $q \notin \mathcal{F}$} \\
\mathbf{T}_{p,q} & \text{if $q \in \mathcal{F}$} \\
\end{array}
\right..
\end{equation}
We finally introduce the dimension $R\eqdef|\mathcal{Q}'|$ and the
column vector $\mathbf{v}\eqdef (1, \ldots, 1)^\top$ of size $R$, where $^\top$ denotes the transpose symbol.}

\begin{corollary}\label{cor:s}
  With the same hypothesis and notations as in Theorem~\ref{thm:dfa}, the \greg{probability generating function (pgf)} of $N_\ell$ is
  then explicitly given by:
  \begin{equation}\label{eq:mgfGl}
    G_\ell(y)\eqdef\sum_{n \geqslant 0} \mathbb{P}(N_\ell = n) y^n = \mathbf{u} (\mathbf{P} +y\mathbf{Q})^{\ell-m} \mathbf{v}
  \end{equation}
and we also have:
   \begin{equation}\label{eq:mgfG}
    G(y,z)\eqdef\sum_{\ell \geqslant m} \sum_{n \geqslant 0} \mathbb{P}(N_\ell=n) y^n z^\ell=\sum_{\ell \geqslant m} G_\ell(y) z^{\ell} = \mathbf{u} (\mathbf{I} - z\mathbf{P} -yz\mathbf{Q})^{-1} \mathbf{v} z^m
  \end{equation}
  \greg{where $\mathbf{I}$ denotes the identity matrix.}
\end{corollary}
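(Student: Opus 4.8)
The plan is to establish \eqref{eq:mgfGl} first, by a direct induction on $\ell$ that tracks the joint law of the embedding chain and the partial count, and then to deduce \eqref{eq:mgfG} by summing a matrix geometric series.

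First I would introduce, for each $\ell \geqslant m$, the dimension-$R$ row vector $\mathbf{a}_\ell(y)$ indexed by $\mathcal{Q}'$ with entries $(\mathbf{a}_\ell(y))_q \eqdef \mathbb{E}\!\left[y^{N_\ell}\mathbf{1}_{\{Y_\ell=q\}}\right] = \sum_{n\geqslant 0}\mathbb{P}(Y_\ell=q,\,N_\ell=n)\,y^n$. By Theorem~\ref{thm:dfa} an occurrence of $\mathcal{W}$ ends at position $i$ exactly when $Y_i\in\mathcal{F}$, so $N_\ell=\sum_{i=m+1}^\ell\mathbf{1}_{\{Y_i\in\mathcal{F}\}}$ and hence $N_{\ell+1}=N_\ell+\mathbf{1}_{\{Y_{\ell+1}\in\mathcal{F}\}}$. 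Since $N_m=0$, the base case is $\mathbf{a}_m(y)=\mathbf{u}$.

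For the inductive step I would condition on the value of $Y_\ell$ and invoke the fact, from Theorem~\ref{thm:dfa}, that $(Y_i)_{i\geqslant m}$ is a homogeneous order-$1$ Markov chain with transition matrix $\mathbf{T}=\mathbf{P}+\mathbf{Q}$. If $q\notin\mathcal{F}$, moving into $q$ leaves the count unchanged and $\mathbf{T}_{p,q}=\mathbf{P}_{p,q}$; if $q\in\mathcal{F}$, the count increases by one and $\mathbf{T}_{p,q}=\mathbf{Q}_{p,q}$. Splitting the sum over $n$ according to these two cases and factoring out a $y$ in the final-state case gives, in matrix form, $\mathbf{a}_{\ell+1}(y)=\mathbf{a}_\ell(y)(\mathbf{P}+y\mathbf{Q})$, so that $\mathbf{a}_\ell(y)=\mathbf{u}(\mathbf{P}+y\mathbf{Q})^{\ell-m}$ by induction. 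Right-multiplying by $\mathbf{v}=(1,\ldots,1)^\top$ sums the entries of $\mathbf{a}_\ell(y)$, which collapses the constraint $Y_\ell=q$ and yields $G_\ell(y)=\sum_{n\geqslant 0}\mathbb{P}(N_\ell=n)y^n=\mathbf{u}(\mathbf{P}+y\mathbf{Q})^{\ell-m}\mathbf{v}$, i.e. \eqref{eq:mgfGl}.

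Finally, \eqref{eq:mgfG} follows by substituting this expression into $G(y,z)=\sum_{\ell\geqslant m}G_\ell(y)z^\ell$, pulling out the factor $z^m$, re-indexing with $k=\ell-m$, and recognising the Neumann series $\sum_{k\geqslant 0}z^k(\mathbf{P}+y\mathbf{Q})^k=(\mathbf{I}-z\mathbf{P}-yz\mathbf{Q})^{-1}$. This induction is essentially routine bookkeeping; the one point that deserves a word of care — and the closest thing to an obstacle — is the justification of this last identity, which holds as an identity of formal power series in $z$ (matching the coefficient of $z^k$ on both sides) and analytically whenever $|z|$ is small enough that $\mathbf{I}-z\mathbf{P}-yz\mathbf{Q}$ is invertible, e.g. $|z|\,\rho(\mathbf{P}+y\mathbf{Q})<1$.
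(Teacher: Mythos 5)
Your proposal is correct and follows essentially the same route as the paper: the row vector $\mathbf{a}_\ell(y)$ you introduce is exactly the joint generating function of $(Y_\ell,N_\ell)$ that the paper's proof invokes when it says ``$\mathbf{u}(\mathbf{P}+y\mathbf{Q})^{\ell-m}$ gives the joint distribution of $(Y_\ell,N_\ell)$,'' and both arguments finish by marginalizing with $\mathbf{v}$ and summing the geometric series $\sum_{k\geqslant 0}\mathbf{M}^k=(\mathbf{I}-\mathbf{M})^{-1}$ with $\mathbf{M}=z(\mathbf{P}+y\mathbf{Q})$. Your version simply spells out the induction and the formal-power-series justification that the paper leaves implicit.
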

\begin{proof}
  It is clear that $\mathbf{u}\mathbf{T}^\ell$ gives the marginal
  distribution of $Y_\ell$. If we now introduce the dummy variable $y$
  to keep track of the number of pattern occurrences, then $\mathbf{u}
  (\mathbf{P} +y\mathbf{Q})^\ell$ gives the joint distribution of
  $(Y_\ell,N_\ell)$. Marginalizing over $Y_\ell$ through the product
  with $\mathbf{v}$ hence achieves the proof of
  Equation~(\ref{eq:mgfGl}). Equation~(\ref{eq:mgfG}) is then obtained
  simply by exploiting the relation $\sum_{k\geqslant 0}
  {\mathbf{M}^k}=(\mathbf{I}-\mathbf{M})^{-1}$ with
  $\mathbf{M}=z(\mathbf{P}+y\mathbf{Q})$.
\end{proof}

\begin{example}
  Considering the same pattern and associated DFA as in
  Example~\ref{ex:1}, one can directly apply Theorem~\ref{thm:dfa} to
  get the expression of $\mathbf{T}$, the transition matrix of
  $Y_0^\ell$ over $\mathcal{Q}=\{0,1,2,3,4,5,6,7,8,9,10,11\}$: $$
  \mathbf{T}=
\left(
    \begin{array}{*{12}c}
     
      \pi_{{\tt B},{\tt B}}&\pi_{{\tt B},{\tt A}}&0&0&0&0&0&0&0&0&0&0\\
     
      0&\pi_{{\tt A},{\tt A}}&\pi_{{\tt A},{\tt B}}&0&0&0&0&0&0&0&0&0\\
     
      0&0&0&\pi_{{\tt B},{\tt A}}&\pi_{{\tt B},{\tt B}}&0&0&0&0&0&0&0\\
     
      0&0&\pi_{{\tt A},{\tt B}}&0&\pi_{{\tt A},{\tt A}}&0&0&0&0&0&0&0\\
     
      \pi_{{\tt B},{\tt B}}&0&0&0&0&\pi_{{\tt B},{\tt A}}&0&0&0&0&0&0\\
     
      0&0&\pi_{{\tt A},{\tt B}}&0&0&0&\pi_{{\tt A},{\tt A}}&0&0&0&0&0\\
     
      0&0&0&0&0&0&0&\pi_{{\tt A},{\tt A}}&\pi_{{\tt A},{\tt B}}&0&0&0\\
     
      0&0&\pi_{{\tt A},{\tt B}}&0&0&0&0&0&0&\pi_{{\tt A},{\tt A}}&0&0\\
     
      0&0&0&0&\pi_{{\tt A},{\tt B}}&0&0&0&0&0&\pi_{{\tt A},{\tt A}}&0\\
     
      0&\pi_{{\tt A},{\tt A}}&0&0&0&0&0&0&0&0&0&\pi_{{\tt A},{\tt B}}\\
     
      0&0&0&0&0&\pi_{{\tt A},{\tt A}}&0&0&0&0&0&\pi_{{\tt A},{\tt B}}\\
     
      0&0&0&\pi_{{\tt B},{\tt A}}&\pi_{{\tt B},{\tt B}}&0&0&0&0&0&0&0\\
    \end{array}
  \right)
$$
where $\pi_{a,b}= \mathbb{P} (X_2=b | X_1 = a)$ for all $a,b \in \{{\tt A},{\tt B}\}$.
\end{example}

\section{Partial recursion}\label{sec:partial}

We want here to focus directly on the expression of $G_\ell(y)$ in
Equation~(\ref{eq:mgfGl}) rather than exploiting the bivariate
expression $G(y,z)$ of Equation~(\ref{eq:mgfG}). A straightforward approach
consists in computing recursively $\mathbf{u} (\mathbf P + y \mathbf
Q)^i$ (or conversely $(\mathbf P + y \mathbf Q)^i \mathbf v$) up to
$i=\ell-m$ taking full advantage of the sparse structure of matrices
$\mathbf P$ and $\mathbf Q$ at each step. This is typically what is
suggested in \citet{Nue06}. The resulting complexity to compute
$\mathbb{P}(N_\ell = n)$ is then $O(\Omega \times n \times \ell)$ in
time and $O(\Omega \times n)$ in space, where $\Omega=R \times
|\mathcal{A}|$ is the number of nonzero elements in
$\mathbf{P}+\mathbf{Q}$. This straightforward (but effective) approach
is easy to implement and is from now on referred as the ``full recursion''
Algorithm.

Another appealing idea is to compute directly the matrix $(\mathbf P
+ y \mathbf Q)^{\ell-m}$ using a classical dyadic decomposition of
$\ell-m$. This is typically the approach suggested by
\cite{ribeca}. The resulting complexity to obtain $\mathbb{P}(N_\ell =
n)$ is $O(R^3 \times n^2 \times \log \ell )$ in time and $O(R^2 \times
n \times \log \ell )$ in space. The algorithm can be further refined
by using FFT-polynomial products (and a very careful implementation)
in order to replace the quadratic complexity in $n$ by $O(n \log
n)$. The resulting algorithm however suffers from numerical instabilities
when considering the tail distribution events and is therefore not
suitable for computing extreme $p$-values. If this approach might offer
interesting performance for relatively small values of $R$ and $n$,
its main drawback is that it totally ignores the sparse structure of
the matrices and therefore fails to deal with highly complex patterns
(large $R$).

Here we want to introduce another approach that fully takes advantage
of the sparsity of the problem to display a linear complexity with $R$
(as with full recursion) but also dramatically reduces the
complexity in terms of the sequence length $\ell$ in order to be suitable for
large scale problems.

From now on, let us assume that $\mathbf{P}$ is an irreducible and
aperiodic \greg{positive} matrix and we denote by $\lambda$ its largest eigenvalue (we
know that $\lambda>0$ thanks to Perron-Frob\'enius). Let us define
$\widetilde{\mathbf{P}}\stackrel{\text{def}}{=}\mathbf{P}/\lambda$ and
$\widetilde{\mathbf{Q}}\stackrel{\text{def}}{=}\mathbf{Q}/\lambda$.

For all $i\geqslant 0$ and all $k \geqslant 0$ we consider the \greg{dimension $R$ column-vector}
$\mathbf{F}_k(i)\stackrel{\text{def}}{=}[y^k](\widetilde{\mathbf{P}}+y\widetilde{\mathbf{Q}})^i\mathbf{v}$.
By convention, $\mathbf{F}_k(i)=\mathbf{0}$ if $i<0$. It is then
possible to derive from Equation~(\ref{eq:mgfGl}) that
$\mathbb{P}(N_\ell=k)=[y^k]F(y)=\lambda^{\ell-m}\mathbf{u}
\mathbf{F}_k(\ell-m)$. Additionally, we recursively define the
\greg{dimension $R$ column-vector} $\mathbf{D}_j^k(i)$ for all $k,i,j \geqslant 0$ by
$\mathbf{D}_k^0(i)\stackrel{\text{def}}{=}\mathbf{F}_k(i)$ and, if $i
\geqslant 1$ and $j \geqslant 1$,
$\mathbf{D}_k^{j}(i)\stackrel{\text{def}}{=}\mathbf{D}_k^{j-1}(i)-\mathbf{D}_k^{j-1}(i-1)$
so that $\mathbf{D}_k^j(i)=\sum_{\delta=0}^{j} (-1)^\delta {j \choose
  \delta} \mathbf{F}_k(i-\delta)$.

\begin{lemma}\label{lemma:1}
  For all $j \geqslant 0$, $k \geqslant 1$, and $i\geqslant j$ we have:
  \begin{equation}
    \mathbf{D}_0^j(i+1)=\widetilde{\mathbf{P}}\mathbf{D}_0^j(i)
    \quad\text{and}\quad
    \mathbf{D}_k^j(i+1)=\widetilde{\mathbf{P}}\mathbf{D}_k^j(i)
    +\widetilde{\mathbf{Q}}\mathbf{D}_{k-1}^j(i).
  \end{equation}
\end{lemma}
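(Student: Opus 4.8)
The plan is to prove both identities by induction on $i$, after first establishing the base recursion for $j=0$ directly from the definition of $\mathbf{F}_k$.

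First I would record the $j=0$ case. By definition $(\widetilde{\mathbf{P}}+y\widetilde{\mathbf{Q}})^{i+1}\mathbf{v} = (\widetilde{\mathbf{P}}+y\widetilde{\mathbf{Q}})(\widetilde{\mathbf{P}}+y\widetilde{\mathbf{Q}})^{i}\mathbf{v}$, so extracting the coefficient of $y^k$ on both sides and using that multiplication by $\widetilde{\mathbf{P}}$ does not shift the $y$-degree while multiplication by $y\widetilde{\mathbf{Q}}$ shifts it by one, we get
\begin{equation}
\mathbf{F}_0(i+1)=\widetilde{\mathbf{P}}\mathbf{F}_0(i)
\quad\text{and}\quad
\mathbf{F}_k(i+1)=\widetilde{\mathbf{P}}\mathbf{F}_k(i)+\widetilde{\mathbf{Q}}\mathbf{F}_{k-1}(i)
\end{equation}
for all $i\geqslant 0$ and $k\geqslant 1$ (with $\mathbf{F}_{-1}\equiv\mathbf{0}$). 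Since $\mathbf{D}_k^0(i)=\mathbf{F}_k(i)$ by definition, this is exactly the claim for $j=0$, valid for all $i\geqslant 0 = j$.

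Next I would induct on $j$. Assume the two recursions hold at level $j-1$ for all $i\geqslant j-1$. Fix $i\geqslant j$. Using the defining relation $\mathbf{D}_k^{j}(i)=\mathbf{D}_k^{j-1}(i)-\mathbf{D}_k^{j-1}(i-1)$, write
\begin{equation}
\mathbf{D}_k^{j}(i+1)=\mathbf{D}_k^{j-1}(i+1)-\mathbf{D}_k^{j-1}(i).
\end{equation}
Since $i+1\geqslant j\geqslant j-1$ and $i\geqslant j\geqslant j-1$, the induction hypothesis applies to both terms on the right. For $k\geqslant 1$ this gives
\begin{equation}
\mathbf{D}_k^{j}(i+1)=\bigl(\widetilde{\mathbf{P}}\mathbf{D}_k^{j-1}(i)+\widetilde{\mathbf{Q}}\mathbf{D}_{k-1}^{j-1}(i)\bigr)-\bigl(\widetilde{\mathbf{P}}\mathbf{D}_k^{j-1}(i-1)+\widetilde{\mathbf{Q}}\mathbf{D}_{k-1}^{j-1}(i-1)\bigr),
\end{equation}
and regrouping by linearity, $\widetilde{\mathbf{P}}\bigl(\mathbf{D}_k^{j-1}(i)-\mathbf{D}_k^{j-1}(i-1)\bigr)+\widetilde{\mathbf{Q}}\bigl(\mathbf{D}_{k-1}^{j-1}(i)-\mathbf{D}_{k-1}^{j-1}(i-1)\bigr)=\widetilde{\mathbf{P}}\mathbf{D}_k^{j}(i)+\widetilde{\mathbf{Q}}\mathbf{D}_{k-1}^{j}(i)$, which is the desired formula. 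The $k=0$ case is identical with the $\widetilde{\mathbf{Q}}$ term absent.

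The one place to be careful — and what I expect to be the only real subtlety — is the index bookkeeping: the recursion at level $j-1$ is only asserted for $i\geqslant j-1$, so I must check that when proving level $j$ for $i\geqslant j$, both arguments $i+1$ and $i$ that I feed into the hypothesis are $\geqslant j-1$, which they are. One should also note that the endpoint case $i=j$ uses $\mathbf{D}_k^{j-1}(j-1)$, which is covered, and that for $i<j$ the telescoped differences may involve $\mathbf{F}_k$ at negative arguments where the convention $\mathbf{F}_k(i)=\mathbf{0}$ makes the stated recursion fail — hence the hypothesis $i\geqslant j$ in the lemma is genuinely needed. No other obstacle arises; everything else is linearity of $\widetilde{\mathbf{P}}$ and $\widetilde{\mathbf{Q}}$ and the definition of the finite-difference operator.
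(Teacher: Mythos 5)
Your proof is correct and follows essentially the same route as the paper's (very terse) argument: the $j=0$ case is read off directly from the definition $\mathbf{D}_k^0(i)=\mathbf{F}_k(i)$ by extracting the coefficient of $y^k$ in $(\widetilde{\mathbf{P}}+y\widetilde{\mathbf{Q}})^{i+1}\mathbf{v}$, and the general case follows by induction on $j$ using the finite-difference definition and linearity. Your explicit index bookkeeping for $i\geqslant j$ is a welcome addition that the paper leaves implicit.
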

\begin{proof}
  The results for $j=0$ come directly from the definition of
  $\mathbf{D}_k^0(i)=\mathbf{F}_k(i)$, the rest of the proof is achieved
  by induction.
\end{proof}

From now on, all asymptotic developments in the current section are
supposed to be valid for $i \rightarrow +\infty$.

\begin{lemma}\label{lemma:2}
  For all $k \geqslant 0$, \greg{there exists a dimension $R$
    column-vector $\mathbf{C}_k$ such that}:
  \begin{equation}\label{eq:Ck}
    \mathbf{D}_k^k(i)=\mathbf{C}_k +O(\nu^{i/(k+1)})
  \end{equation}
where  $\nu$ denotes  the magnitude of the second eigenvalue of
$\widetilde{\mathbf{P}}$ when we order the eigenvalues by decreasing magnitude.
\end{lemma}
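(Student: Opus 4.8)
The plan is to prove the asymptotic expansion \eqref{eq:Ck} by induction on $k$, using the recursions of Lemma~\ref{lemma:1} together with a spectral analysis of $\widetilde{\mathbf{P}}$. The key point is that, since $\lambda$ is the Perron eigenvalue of $\mathbf{P}$, the matrix $\widetilde{\mathbf{P}}=\mathbf{P}/\lambda$ has spectral radius $1$ with a simple dominant eigenvalue equal to $1$; writing $\Pi$ for the associated spectral projector (rank one, built from the left/right Perron eigenvectors), we have $\widetilde{\mathbf{P}}^i = \Pi + O(\nu^i)$ as $i\to+\infty$, where $\nu<1$ is the magnitude of the second largest eigenvalue. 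Hence $\widetilde{\mathbf{P}}^i\mathbf{w}$ converges geometrically (rate $\nu$) to $\Pi\mathbf{w}$ for any fixed vector $\mathbf{w}$.

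\textbf{Base case $k=0$.} Here $\mathbf{D}_0^0(i)=\mathbf{F}_0(i)=\widetilde{\mathbf{P}}^i\mathbf{v}$ by definition, so $\mathbf{D}_0^0(i)=\Pi\mathbf{v}+O(\nu^i)$; set $\mathbf{C}_0\eqdef\Pi\mathbf{v}$. This is even stronger than claimed (rate $\nu$ rather than $\nu^{i/1}$... which is the same, so it matches exactly).

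\textbf{Inductive step.} Assume $\mathbf{D}_{k-1}^{k-1}(i)=\mathbf{C}_{k-1}+O(\nu^{i/k})$. First I would control $\mathbf{D}_{k}^{k-1}(i)$: by Lemma~\ref{lemma:1}, $\mathbf{D}_k^{k-1}(i+1)=\widetilde{\mathbf{P}}\mathbf{D}_k^{k-1}(i)+\widetilde{\mathbf{Q}}\mathbf{D}_{k-1}^{k-1}(i)$, an affine recursion driven by the source term $\widetilde{\mathbf{Q}}\mathbf{D}_{k-1}^{k-1}(i)=\widetilde{\mathbf{Q}}\mathbf{C}_{k-1}+O(\nu^{i/k})$. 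Unrolling gives $\mathbf{D}_k^{k-1}(i)=\sum_{t} \widetilde{\mathbf{P}}^{\,t}\widetilde{\mathbf{Q}}\mathbf{D}_{k-1}^{k-1}(i-1-t)$ (plus a term $\widetilde{\mathbf{P}}^{i-k+1}\mathbf{D}_k^{k-1}(k-1)$ that is $O(\nu^i)$); because $\widetilde{\mathbf{P}}$ has spectral radius exactly $1$, this sum grows \emph{linearly} in $i$ — precisely, $\mathbf{D}_k^{k-1}(i)=i\,\Pi\widetilde{\mathbf{Q}}\mathbf{C}_{k-1}+(\text{bounded})+O(\text{geometric})$, i.e. $\mathbf{D}_k^{k-1}(i)=a_k\, i + b_k(i)$ where $a_k\eqdef\Pi\widetilde{\mathbf{Q}}\mathbf{C}_{k-1}$ is a constant vector and $b_k(i)$ converges to a constant vector $\mathbf{C}_k$ at geometric rate. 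More generally one shows by a secondary induction on $j$ (from $j=0$ down to... up to $j=k$) that $\mathbf{D}_k^{k-1-j}(i)$ is a vector polynomial in $i$ of degree $j+1$ with the leading coefficient a multiple of $\Pi\widetilde{\mathbf{Q}}\cdots$, plus a geometrically small remainder; each application of the difference operator $\mathbf{D}\mapsto\mathbf{D}(i)-\mathbf{D}(i-1)$ lowers the polynomial degree by one. Applying this $k$ times, $\mathbf{D}_k^k(i)=\mathbf{D}_k^{k-1}(i)-\mathbf{D}_k^{k-1}(i-1)$ kills the linear term $a_k i$, leaving $\mathbf{D}_k^k(i)=\mathbf{C}_k+O(\text{geometric rate})$.

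\textbf{The rate bookkeeping.} The one delicate bit is justifying the exponent $i/(k+1)$ rather than a clean $\nu^{i}$. The reason one cannot keep the rate $\nu^i$ is that the remainder terms $b_k(i)$ accumulate through the convolution with the full history $\mathbf{D}_{k-1}^{k-1}(i-1-t)$, and the geometric remainder of $\mathbf{D}_{k-1}^{k-1}$ gets "spread" across a window; after multiplying by a polynomial factor $i^{j}$ (which is $O(\nu^{-\epsilon i})$ for any $\epsilon>0$) and absorbing it into a slightly worse geometric rate, one loses a constant factor in the exponent at each level of the induction, giving rate $\nu^{i/k}$ at level $k-1$ and $\nu^{i/(k+1)}$ at level $k$. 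Concretely: a bound of the form $C(1+i)^{k}\nu^{i}$ is $O\!\big((\nu^{1/(k+1)})^{i}\big)$ since $(1+i)^{k}=o\big(\nu^{-\epsilon i}\big)$ — choosing $\epsilon$ so that $\nu^{1-\epsilon}=\nu^{1/(k+1)}$ works for $i$ large. So I would carry the induction hypothesis in the explicit form ``$\mathbf{D}_k^k(i)=\mathbf{C}_k+O((1+i)^{?}\nu^{i})$'' and only at the end weaken it to the stated $\nu^{i/(k+1)}$, which makes the constants transparent. The main obstacle is therefore not any single hard estimate but getting the bookkeeping of polynomial-times-geometric remainders through the double induction (on $k$ and on the difference order $j$) clean enough that the exponent degradation comes out to exactly $1/(k+1)$; the spectral input ($\widetilde{\mathbf{P}}^i=\Pi+O(\nu^i)$, which is just Perron--Frobenius plus a Jordan-form estimate) is standard and can be cited.
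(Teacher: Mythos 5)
Your proposal is correct and rests on the same two pillars as the paper's proof: induction on $k$, and the discrete Duhamel unrolling of the recursion of Lemma~\ref{lemma:1}, with the spectral gap $\widetilde{\mathbf{P}}^i=\Pi+O(\nu^i)$ as the only analytic input. The difference lies in where in the difference hierarchy you unroll, and hence in how the exponent $1/(k+1)$ appears. The paper unrolls at level $k$, writing $\mathbf{D}_k^k(i+\alpha)=\widetilde{\mathbf{P}}^i\mathbf{D}_k^k(\alpha)+\sum_{j}\widetilde{\mathbf{P}}^{i-j}\widetilde{\mathbf{Q}}\mathbf{D}_{k-1}^k(j-1+\alpha)$, whose driving term $\mathbf{D}_{k-1}^k$ is already small (it is one more difference than the induction hypothesis controls); it deduces $\mathbf{D}_k^{k+1}(i+\alpha)=O(\nu^i)+O(\nu^{\alpha/k})$, obtains $1/(k+1)$ by optimizing the split $\alpha\approx j\tfrac{k}{k+1}$, and finally sums the telescoping differences to define $\mathbf{C}_k$. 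You unroll one level lower, at $\mathbf{D}_k^{k-1}$, whose driving term $\mathbf{D}_{k-1}^{k-1}$ converges to a nonzero constant; this gives the linear-plus-convergent decomposition $a_k\,i+b_k(i)$ and a single difference then yields the claim, with $1/(k+1)$ emerging from absorbing polynomial factors into a weaker geometric rate. Your route, if one carries the sharp form of the remainder through the induction as you suggest at the end, in fact proves the stronger bound $\mathbf{D}_k^k(i)=\mathbf{C}_k+O(\mathrm{poly}(i)\,\nu^{i})$, of which the stated $O(\nu^{i/(k+1)})$ is a weakening. Two cosmetic slips, neither affecting validity: $\mathbf{C}_k$ is $a_k=\Pi\widetilde{\mathbf{Q}}\mathbf{C}_{k-1}$, the limit of $\mathbf{D}_k^k$, not the limit of $b_k$; and only one application of the difference operator, not $k$, is needed to pass from $\mathbf{D}_k^{k-1}$ to $\mathbf{D}_k^k$.
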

\begin{proof}
  It is clear that $\mathbf{D}_0^0(i)=\widetilde{\mathbf{P}}^i
  \mathbf{v}$, elementary linear algebra hence proves the lemma for
  $k=0$ with $\mathbf{C}_0=\widetilde{\mathbf{P}}^\infty
  \mathbf{v}$. We assume now that Equation~(\ref{eq:Ck}) is proved
  up to a fixed $k-1 \geqslant 0$. A recursive application of
  Lemma~\ref{lemma:1} gives for all $\alpha \geqslant k$ and $i
  \geqslant 0$ that
  $\mathbf{D}_k^k(i+\alpha)=\widetilde{\mathbf{P}}^i\mathbf{D}_k^k(\alpha)
  + \sum_{j=1}^{i} \mathbf{P}^{i-j} \widetilde{\mathbf{Q}}
  \mathbf{D}_{k-1}^k(j-1+\alpha)$.\\
  Thanks to \jgdhol{Equation~(\ref{eq:Ck})}
  it is clear that the second term of this sum is a
  $O(\nu^{\alpha/k})$, and we then have
  $\mathbf{D}_k^k(i+\alpha)=\widetilde{\mathbf{P}}^\infty\mathbf{D}_k^k(\alpha)+O(\nu^i)+O(\nu^{\alpha/k})$. Therefore
  we have
  $\mathbf{D}_k^{k+1}(i+\alpha)=O(\nu^i)+O(\nu^{\alpha/k})$. For any
  $j \geqslant k+1$, if we set $\alpha=j\frac{k}{k+1}$, then we obtain
  $\mathbf{D}_k^{k+1}(j)=O(\nu^{j/(k+1)})$, which finishes the proof.
\end{proof}

\begin{proposition}
  For all $j \geqslant 0$, $\alpha \geqslant 0$ and $i \geqslant j+\alpha$ we have:
  \begin{equation}\label{eq:Dk}
  \mathbf{D}_k^j(i)=\sum_{j'=j}^{k} {i-\alpha-k+j'-j  \choose k-j} \mathbf{D}_k^{j'}(\alpha+j')+{i-\alpha-j \choose k-j} O\left(\nu^{\alpha/(k+1)}\right)
  \end{equation}
and in particular for $j=0$ we have:
  \begin{equation}\label{eq:Fk}
  \mathbf{F}_k(i)=\sum_{j'=0}^{k} {i-\alpha-k+j'  \choose k} \mathbf{D}_k^{j'}(\alpha+j')+{i-\alpha \choose k} O\left(\nu^{\alpha/(k+1)}\right).
  \end{equation}
\end{proposition}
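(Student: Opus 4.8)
The plan is to prove \eqref{eq:Dk} by downward induction on $j$, starting from $j=k$ and descending to $j=0$, with \eqref{eq:Fk} being just the special case $j=0$. For the base case $j=k$, the sum on the right-hand side has a single term $j'=k$, the binomial coefficient $\binom{i-\alpha-k+j'-j}{k-j}=\binom{i-\alpha}{0}=1$, and the claimed identity reduces to $\mathbf{D}_k^k(i)=\mathbf{D}_k^k(\alpha+k)+O(\nu^{\alpha/(k+1)})$. This follows immediately from Lemma~\ref{lemma:2}: both $\mathbf{D}_k^k(i)$ and $\mathbf{D}_k^k(\alpha+k)$ equal $\mathbf{C}_k$ up to errors $O(\nu^{i/(k+1)})$ and $O(\nu^{(\alpha+k)/(k+1)})$ respectively, and since $i\geqslant j+\alpha=k+\alpha$, the dominant error is $O(\nu^{\alpha/(k+1)})$.

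For the inductive step, I would assume \eqref{eq:Dk} holds at level $j+1$ (for all valid $i$ and $\alpha$) and establish it at level $j$. The key tool is the telescoping definition $\mathbf{D}_k^{j}(i)=\mathbf{D}_k^{j}(\alpha+j)+\sum_{r=\alpha+j+1}^{i}\bigl(\mathbf{D}_k^{j}(r)-\mathbf{D}_k^{j}(r-1)\bigr)=\mathbf{D}_k^{j}(\alpha+j)+\sum_{r=\alpha+j+1}^{i}\mathbf{D}_k^{j+1}(r)$, valid because $\mathbf{D}_k^{j+1}(r)=\mathbf{D}_k^{j}(r)-\mathbf{D}_k^{j}(r-1)$ by definition. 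Into each summand $\mathbf{D}_k^{j+1}(r)$ I substitute the induction hypothesis at level $j+1$ (with the same $\alpha$), obtaining $\mathbf{D}_k^{j+1}(r)=\sum_{j'=j+1}^{k}\binom{r-\alpha-k+j'-j-1}{k-j-1}\mathbf{D}_k^{j'}(\alpha+j')+\binom{r-\alpha-j-1}{k-j-1}O(\nu^{\alpha/(k+1)})$. Then I sum over $r$ from $\alpha+j+1$ to $i$ and apply the hockey-stick (column-sum) identity $\sum_{r}\binom{r-c}{k-j-1}=\binom{i-c+1}{k-j}-\binom{\alpha+j-c+1}{k-j}$; with $c=\alpha+k-j'+j+1$ one checks the lower limit contributes a vanishing binomial $\binom{\alpha+j-c+1}{k-j}=\binom{j'-k-1+\cdots}{k-j}$ which is zero for the relevant range, leaving exactly $\binom{i-\alpha-k+j'-j}{k-j}\mathbf{D}_k^{j'}(\alpha+j')$. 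Collecting the $j'=j$ term (the leftover $\mathbf{D}_k^{j}(\alpha+j)$, whose coefficient $\binom{i-\alpha-k+j-j}{k-j}=\binom{i-\alpha-k}{k-j}$ must be reconciled — actually the $j'=j$ term has coefficient $1$ only when... ) I would recombine to match the stated sum from $j'=j$ to $k$, and likewise sum the error binomials via the same hockey-stick identity to get the single error term $\binom{i-\alpha-j}{k-j}O(\nu^{\alpha/(k+1)})$.

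The main obstacle I anticipate is the careful bookkeeping of the binomial-coefficient index shifts in the hockey-stick summation: one must verify that summing $\binom{r-\alpha-k+j'-j-1}{k-j-1}$ over $r\in\{\alpha+j+1,\dots,i\}$ yields precisely $\binom{i-\alpha-k+j'-j}{k-j}$ with no spurious boundary term, which requires checking that $\binom{\alpha+j-\alpha-k+j'-j}{k-j}=\binom{j'-k}{k-j}$ vanishes for $j\leqslant j'\leqslant k$ (it does, since $j'-k\leqslant 0<k-j$ unless $j'=k$ and $j=k$, an edge case already handled), and similarly that the leftover base term $\mathbf{D}_k^{j}(\alpha+j)$ correctly supplies the missing $j'=j$ summand with coefficient $\binom{i-\alpha-k}{k-j}$... wait, this needs care, since the stated $j'=j$ coefficient is $\binom{i-\alpha-k}{k-j}$ and not $1$, so in fact the base term must itself be folded in — meaning I should instead write the telescoping as $\mathbf{D}_k^{j}(i)=\sum_{r}\mathbf{D}_k^{j+1}(r)+\mathbf{D}_k^{j}(\alpha+j)$ and recognize $\mathbf{D}_k^{j}(\alpha+j)$ as the $j'=j$ term evaluated at the minimal index, then absorb it via the convention that the hockey-stick sum "starts" one step earlier. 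The cleanest route is probably to prove the identity first for the scalar binomial recurrence (pure combinatorics, ignoring the vectors) by the same induction, then tensor with the vector relations, isolating the analytic content entirely in Lemma~\ref{lemma:2} and the observation that $\binom{r-\alpha-j-1}{k-j-1}$ summed over $r$ gives $\binom{i-\alpha-j}{k-j}$ for the error term.
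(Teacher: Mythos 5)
Your overall strategy --- downward induction on $j$ from the base case $j=k$ (Lemma~\ref{lemma:2}), using the telescoping identity $\mathbf{D}_k^j(i)=\mathbf{D}_k^j(\alpha+j)+\sum_{r=\alpha+j+1}^{i}\mathbf{D}_k^{j+1}(r)$ --- is exactly the route the paper indicates (its proof is a one-line sketch citing precisely this identity and Lemma~\ref{lemma:2}), and your base case and your error-term bookkeeping (summing $\binom{r-\alpha-j-1}{k-j-1}$ over $r$ to get $\binom{i-\alpha-j}{k-j}$) are sound. The genuine gap is the point you flag and then wave away: the inductive step does not close. The telescoping produces the term $\mathbf{D}_k^j(\alpha+j)$ with coefficient exactly $1$, whereas the formula you are trying to prove assigns it the coefficient $\binom{i-\alpha-k}{k-j}$; no ``convention about where the hockey-stick sum starts'' turns $1$ into $\binom{i-\alpha-k}{k-j}$, and the discrepancy is not confined to $j'=j$. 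If you carry the induction through honestly, the coefficient of $\mathbf{D}_k^{j'}(\alpha+j')$ that the recursion actually yields is $\binom{i-\alpha-j}{j'-j}$ (the number of chains $\alpha+j'\leqslant r_{j'-j}\leqslant\cdots\leqslant r_1\leqslant i$), which agrees with the displayed $\binom{i-\alpha-k+j'-j}{k-j}$ only at $j'=k$ and in the error term. A concrete check: for $k=1$, $j=0$, telescoping plus Lemma~\ref{lemma:2} give $\mathbf{D}_1^0(i)=\mathbf{D}_1^0(\alpha)+(i-\alpha)\,\mathbf{D}_1^1(\alpha+1)+(i-\alpha)\,O(\nu^{\alpha/2})$, while \eqref{eq:Dk} would assign the first term the coefficient $i-\alpha-1$.

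So as written your argument proves a statement with coefficients $\binom{i-\alpha-j}{j'-j}$, not the stated ones, and your attempted reconciliation of the $j'=j$ term is not an argument. To complete the proof you should run the same induction with the coefficients it actually produces: the step then closes cleanly, since $\sum_{r=\alpha+j+1}^{i}\binom{r-\alpha-j-1}{j'-j-1}=\binom{i-\alpha-j}{j'-j}$ and the $j'=j$ coefficient is $\binom{i-\alpha-j}{0}=1$, while the dominant $j'=k$ coefficient and the error coefficient coincide with those displayed in the proposition. Until the coefficient discrepancy is either resolved or acknowledged as a correction to the statement, the proof attempt is incomplete.
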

\begin{proof}
  This is proved by induction, using repetitively Lemma~\ref{lemma:2}
  and the fact that
  $\mathbf{D}_k^j(i)=\mathbf{D}_k^j(\alpha+j)+\mathbf{D}_k^{j+1}(\alpha+j+1)+\ldots
  + \mathbf{D}_k^{j+1}(i)$.
\end{proof}

\begin{algorithm}[htp]\begin{small}
  \begin{algorithmic}
    \REQUIRE{The matrices $\mathbf{P},\mathbf{Q}$, the vectors
      $\mathbf{u},\mathbf{v}$, $n+1$ column-vectors
      $\mathbf{\Delta}_0$, $\mathbf{\Delta}_1$, $\ldots$, $\mathbf{\Delta}_n$ of dimension $R$, $n+1$ real numbers $R_0,R_1,\ldots,R_n$, and a real number $C$}
    \STATE \COMMENT{Compute $\lambda$ through the power method}
    \STATE $\mathbf{\Delta}_0 \leftarrow \mathbf{v}$, $\lambda \leftarrow 1$
    \WHILE{$\lambda$ has not converged with relative precision $\varepsilon$}
    \STATE $\lambda \leftarrow \mathbf{P}\mathbf{\Delta}_0 / \mathbf{\Delta}_0$ (point-wise division) and $\mathbf{\Delta}_0 \leftarrow \mathbf{P} \mathbf{\Delta}_0$
    \ENDWHILE
    \STATE \COMMENT{Normalize $\mathbf{P}$ and $\mathbf{Q}$}
    \STATE $\mathbf{P} \leftarrow \mathbf{P}/\lambda$ and $\mathbf{Q} \leftarrow \mathbf{Q}/\lambda$
    \STATE \COMMENT{Compute $\alpha$ such as $\mathbf{C}_n=\mathbf{D}_n^n(\alpha)$}
    \STATE $\mathbf{\Delta}_0 \leftarrow \mathbf{v}$
    \FOR{$i=1\ldots n$}
    \FOR{$k=i \ldots 1$}
    \STATE $\mathbf{\Delta}_k \leftarrow \mathbf{P}\mathbf{\Delta}_k +  \mathbf{Q}\mathbf{\Delta}_{k-1}-\mathbf{\Delta}_k$ \COMMENT{so that $\mathbf{\Delta}_k$ now contains $\mathbf{D}_k^i(i)$}
    \ENDFOR
    \STATE $\mathbf{\Delta}_0 \leftarrow \mathbf{P}\mathbf{\Delta}_0-\mathbf{\Delta}_0$ \COMMENT{so that $\mathbf{\Delta}_0$ now contains $\mathbf{D}_0^i(i)$}
    \ENDFOR
    \FOR{$i=n+1\ldots \ell-m$}
    \FOR{$k=n \ldots 1$}
    \STATE $\mathbf{\Delta}_k \leftarrow \mathbf{P}\mathbf{\Delta}_k +  \mathbf{Q}\mathbf{\Delta}_{k-1}$ \COMMENT{so that $\mathbf{\Delta}_k$ now contains $\mathbf{D}_k^n(i)$}
    \ENDFOR
    \STATE $\mathbf{\Delta}_0 \leftarrow \mathbf{P}\mathbf{\Delta}_0$ \COMMENT{so that $\mathbf{\Delta}_0$ now contains $\mathbf{D}_0^n(i)$}
    \IF {$\mathbf{\Delta}_n$ as converged towards $\mathbf{C}_n$ with
      relative precision $\eta$} \STATE $\alpha \leftarrow i$ and {\bf
      break} \ENDIF
    \ENDFOR
    \STATE \COMMENT{Compute all $\mathbf{D}_k^0(\alpha-n)$ for $0 \leqslant k \leqslant n$}
    \STATE $\mathbf{\Delta}_0 \leftarrow \mathbf{v}$
    \FOR{$i=1\ldots \alpha-n$}
    \FOR{$k=n \ldots 1$}
    \STATE $\mathbf{\Delta}_k \leftarrow \mathbf{P}\mathbf{\Delta}_k +  \mathbf{Q}\mathbf{\Delta}_{k-1}$ \COMMENT{so that $\mathbf{\Delta}_k$ now contains $\mathbf{D}_k^0(i)$}
    \ENDFOR
    \STATE $\mathbf{\Delta}_0 \leftarrow \mathbf{P}\mathbf{\Delta}_0$ \COMMENT{so that $\mathbf{\Delta}_0$ now contains $\mathbf{D}_0^0(i)$}
    \ENDFOR
    \STATE \COMMENT{Compute $R_k\stackrel{\text{def}}{=}P_{k,\ell-m}(\alpha-n+k)$ for all $0 \leqslant k \leqslant n$}
    \FOR{$k=0 \ldots n$}
    \STATE $R_k \leftarrow \mathbf{u}\mathbf{\Delta}_k$
    \ENDFOR
    \STATE $C \leftarrow 1.0$
    \FOR{$k=1 \ldots n$}
    \FOR{$j=n \ldots 1$}
    \STATE $\mathbf{\Delta}_j \leftarrow \mathbf{P}\mathbf{\Delta}_j +  \mathbf{Q}\mathbf{\Delta}_{j-1}-\mathbf{P}\mathbf{\Delta}_j$ \COMMENT{so that $\mathbf{\Delta}_j$ now contains $\mathbf{D}_j^k(\alpha-n+k)$}
    \ENDFOR
    \STATE $\mathbf{\Delta}_0 \leftarrow \mathbf{P}\mathbf{\Delta}_0-\mathbf{\Delta}_0$ \COMMENT{so that $\mathbf{\Delta}_0$ now contains $\mathbf{D}_0^k(\alpha-n+k)$}
    \STATE $C \leftarrow C \times (\ell-d-\alpha+n-k+1)/k$
    \FOR{$j=k \ldots n$}
    \STATE $R_k \leftarrow R_k+ C \mathbf{u}\mathbf{\Delta}_k$
    \ENDFOR
    \ENDFOR
    \ENSURE{return $\mathbb{P}(N_\ell=k)=R_k$ for all $0 \leqslant k \leqslant n$}
  \end{algorithmic}
  \end{small}
  \caption{Compute $\mathbb{P}(N_\ell=k)$ with relative error $\eta$
    for all $0 \leqslant k \leqslant n$ using floating point
    arithmetic with a relative error $\varepsilon$
    ($<\eta$).}\label{algo3}
\end{algorithm}

\begin{corollary}
  For any $\alpha \geqslant 0$, for any $k \geqslant 0$ and $\ell -m \geqslant 0$ we have:
  \begin{multline*}
  \mathbb{P}(N_\ell=k)= \underbrace{\lambda^{\ell-m} \sum_{j'=0}^{k} {\ell-m-\alpha-k+j'  \choose k} \mathbf{u}\mathbf{D}_k^{j'}(\alpha+j')}_{P_{k,\ell-m}(\alpha)}\\+\lambda^{\ell-m}{\ell-m-\alpha \choose k} O\left(\nu^{\alpha/(k+1)}\right)
  \end{multline*}
  and $P_{k,\ell-m}(\alpha)$ approximates $\mathbb{P}(N_\ell=k)$ with a relative error of:
  $$
  \left| \frac{P_{k,\ell-m}(\alpha)-\mathbb{P}(N_\ell=k)}{\mathbb{P}(N_\ell=k)} \right | =\frac{\lambda^{\ell-m}}{\mathbb{P}(N_\ell=k)}{\ell-m-\alpha \choose k} O\left(\nu^{\alpha/(k+1)}\right).
  $$
\end{corollary}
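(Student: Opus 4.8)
The plan is to treat the corollary as a one‑line specialization of the expansion already available for $\mathbf{F}_k$. Recall the identity $\mathbb{P}(N_\ell=k)=\lambda^{\ell-m}\,\mathbf{u}\,\mathbf{F}_k(\ell-m)$, obtained above directly from Equation~(\ref{eq:mgfGl}) together with the normalizations $\widetilde{\mathbf{P}}=\mathbf{P}/\lambda$, $\widetilde{\mathbf{Q}}=\mathbf{Q}/\lambda$. So the only thing to do is to substitute $i=\ell-m$ into Equation~(\ref{eq:Fk}) and left‑multiply by $\lambda^{\ell-m}\mathbf{u}$:
\[
\lambda^{\ell-m}\mathbf{u}\,\mathbf{F}_k(\ell-m)
=\lambda^{\ell-m}\sum_{j'=0}^{k}\binom{\ell-m-\alpha-k+j'}{k}\mathbf{u}\,\mathbf{D}_k^{j'}(\alpha+j')
+\lambda^{\ell-m}\binom{\ell-m-\alpha}{k}\,\mathbf{u}\,O\!\left(\nu^{\alpha/(k+1)}\right).
\]
The first summand on the right is by definition $P_{k,\ell-m}(\alpha)$; and since $\mathbf{u}$ is a fixed stochastic vector, applying it to a column‑vector that is $O(\nu^{\alpha/(k+1)})$ entrywise produces a scalar that is still $O(\nu^{\alpha/(k+1)})$, the implied constant absorbing $\|\mathbf{u}\|_1=1$. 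This is exactly the claimed identity $\mathbb{P}(N_\ell=k)=P_{k,\ell-m}(\alpha)+\lambda^{\ell-m}\binom{\ell-m-\alpha}{k}O(\nu^{\alpha/(k+1)})$. The relative‑error formula then follows immediately by transposing $P_{k,\ell-m}(\alpha)$, dividing by $\mathbb{P}(N_\ell=k)$, and taking absolute values.

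What genuinely needs attention is the range of validity of Equation~(\ref{eq:Fk}): the Proposition producing it assumes $i\geqslant j+\alpha$, so with $j=0$ one needs $\ell-m\geqslant\alpha$. I would either add this as an explicit hypothesis or note that for $\ell-m<\alpha$ the estimate is vacuous (one simply takes $P_{k,\ell-m}(\alpha)$ to be the exact recursion value), reading the binomial coefficients as generalized binomials throughout. I would also recall the chain of ingredients already established above — Lemma~\ref{lemma:1} for the one‑step recursion on the vectors $\mathbf{D}_k^j$, Lemma~\ref{lemma:2} for the geometric convergence $\mathbf{D}_k^k(i)=\mathbf{C}_k+O(\nu^{i/(k+1)})$, and the telescoping identity used in the proof of that Proposition — but since all of these are proved, nothing new is required for the corollary itself.

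The main obstacle I anticipate is purely the bookkeeping of the $O(\cdot)$ terms. One must check that the implied constant in Equation~(\ref{eq:Fk}) is uniform: it is, because the asymptotics in Lemma~\ref{lemma:2} hold as $i\to\infty$ with absolute implied constants and they propagate unchanged through the finitely many telescoping steps, the binomial prefactor $\binom{\ell-m-\alpha}{k}$ being precisely what Equation~(\ref{eq:Fk}) already carries — so no extra polynomial‑in‑$\ell$ factor is lost. A secondary point worth a sentence is that dividing by $\mathbb{P}(N_\ell=k)$ in the relative‑error expression is legitimate in the regime of interest, since the irreducibility, aperiodicity and positivity assumptions made on $\mathbf{P}$ just before the definition of $\widetilde{\mathbf{P}}$ guarantee $\mathbb{P}(N_\ell=k)>0$ for $\ell$ large enough.
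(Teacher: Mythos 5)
Your proposal is correct and follows exactly the route the paper intends: the corollary is the specialization $i=\ell-m$ of Equation~(\ref{eq:Fk}) left-multiplied by $\lambda^{\ell-m}\mathbf{u}$, using the identity $\mathbb{P}(N_\ell=k)=\lambda^{\ell-m}\mathbf{u}\,\mathbf{F}_k(\ell-m)$ already stated in the text (the paper gives no separate proof, treating it as immediate). Your added remarks on the hypothesis $\ell-m\geqslant\alpha$ and the positivity of $\mathbb{P}(N_\ell=k)$ are sensible refinements but not a different argument.
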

These results lead to Algorithm~\ref{algo3} \greg{which allows to compute $\mathbb{P}(N_\ell=k)$ for all $0 \leqslant k \leqslant n$ for a given $n \geqslant 0$.}
 The workspace complexity of this algorithm is $O(n \times R)$ and
since all matrix vector products exploit the sparse structure of the
matrices, the time complexity is $O(\alpha \times n \times
|\mathcal{A}| \times R)$ with 
\greg{$\alpha=O\left(n^2\log(\ell)/\log(\nu^{-1})\right)$

where $\nu$ is the magnitude of the second
largest eigenvalue.}

\section{High-order lifting}\label{sec:highorder}

An alternative appealing idea consists of using Equation (\ref{eq:mgfG}) to
compute explicitly the rational function $G(y,z)$ and then \jgdhol{perform}
(fast) Taylor expansions, first in $z$, then in $y$ in order to get
the appropriate 
values of $\mathbb{P}(N_\ell=n)$ \citep{nicodeme,lladser}.

\jgdhol{To compute $G(y,z)$}, a really naive approach would solve the
bivariate polynomial system \jgdhol{(\ref{eq:mgfG})}
over the rationals. In the solution, the polynomials would be of
degree at most $R$ in each variable and the rationals of size at most
$R\log_2(R)$, thus yielding a binary cost of $O(R^9\log^2_2(R))$ already for the
computation of $G(y,z)$ and a further cost of the same magnitude for
the extraction of the coefficient of degree $\ell$ in the
development \jgdhol{with, e.g., linear system solving}.

Since \jgdhol{the complexity of computing this generating function} is
usually prohibitive, we first  use modular methods and Chinese
remaindering to compute with polynomials and rationals so that the
cost of the arithmetic remains linear. 
\jgdhol{Also, to take advantage}
of the sparsity we do not invert the matrix but rather compute
directly a rational reconstruction of the solution from the first
iterates of the series, Equation (\ref{eq:mgfGl}). We thus only use sparse matrix-vector products
and do not \jgdhol{fill in} the matrix. Note that \jgdhol{this direct rational
reconstruction} is equivalent to solving the system using iterative
$z$-adic methods.
Finally, we compute only small
chunks of the  
development of Equation (\ref{eq:mgfG}) using the ``high-order'' lifting of 
\cite{Storjohann:2003:jsc}, or the method of \cite{Fiduccia:1985:EFLR}, {\em modulo} $y^{\nu+1}$.
We aim to compute only all the coefficients
$g_{\ell,n} = \mathbb P(N_\ell=n)$ for a given $\ell$ and a given $n\in
[\mu,\nu]$, for an interval $[\mu,\nu]$ with a small $\nu$, but where $\ell$ is
potentially large. Thus we want to avoid computing the whole Taylor
development of the fraction.

Indeed, let $G(y,z)= \frac{B(y,z)}{A(y,z)}$ with $B, A \in \Q[y,z]$ of
degrees $d_{Az}=\deg(A,z)$ and $d_{Bz}=\deg(B,z) \leq
d_{Az}-1$. Overall let us denote by $d$ a bound on $d_{Az}$ and thus
on $d_{Bz}$.  We can assume that $A$ and $B$ are polynomials since we
can always pre-multiply them both by the lowest common multiple of
their denominators.

\jgdhol{Thus we write $B = \sum_{i=0}^{d_{Bz}} b_i(y) z^i$ and, if we denote by
$[z^i]P(z)$ the $i$-th coefficient of the polynomial $P$, then we have:}
$$[z^\ell]G(y,z) = \sum_{i=0}^{d_{Bz}} b_i(y) \times
[z^{\ell-i}]\left(\frac{1}{A(y,z)}\right)$$

Now, the idea is that for a given coefficient $\ell$, the only
coefficients of the development of $1/A$ that are needed are the
coefficients of order $\ell-m$ to $\ell$,
denoted by $[A^{-1}]_{\ell-m}^\ell$.
This is precisely what Storjohann's ``High-order'' lifting can compute
quickly.\\

We will use several Chinese remaindering and rational
reconstructions.

In our cases, we have a series $\sum^\infty g_i z^i$ which is actually
equal to a fraction $\frac{B(z)}{A(z)}$ by definition. Therefore,
provided that we consider sufficiently many of the first coefficients in 
the development, the rational reconstruction of the truncated series
$\sum^{2d} g_i z^i$,
even with rational polynomial in $y$ as coefficients, {\em will} eventually yield $A(z)$ and $B(z)$ as
solutions.

\subsection{Rational reconstruction}

The first step is thus to recover both polynomials $B$ and $A$ from
the series development of Equation (\ref{eq:mgfG}).
Now, one could compute the whole rational
reconstruction of a polynomial over the domain of the rationals, and
then use $d^2\times d^2$ operations for the domain arithmetic, which
would yield a $d^6$ complexity to compute all the $d^2$ coefficients. We
rather use two modular projections, one for the rational
coefficients, one for the polynomials in $y$, in order to reduce the
cost of the arithmetic to only $d^2$. Then the overall cost will
be dominated by the cost of the computation of only $d$ coefficients
of the series, as shown in \jgdhol{Proposition~\ref{prop:fr}}.

Our algorithm has then four main steps:
\begin{enumerate}[1)]
\item We take the series in $z$ modulo some prime numbers (below $2^\gamma$ where $\gamma$ \jgdhol{is, e.g., close to word size}) and some evaluation
  points on the variable $y$;
\item We perform a univariate polynomial fraction reconstruction in $z$ over
  each prime field and for each evaluation point;
\item We interpolate the polynomials in $y$ over the prime field from
  their evaluations;
\item We finally Chinese remainder and rational reconstruct the
  rational coefficients from their modular values.
\end{enumerate}
\newcommand{\enn}{n}

The details of the approach are given in Algorithm~\ref{alg:rr} whose complexity is given by the following proposition.

\begin{algorithm}[htp]
\caption{Modular Bivariate fraction reconstruction over the rationals.}\label{alg:rr}
\begin{algorithmic}[1]
\REQUIRE The matrices $\mathbf{P}$, $\mathbf{Q}$ and the row and
column vectors $\mathbf{u}$, $\mathbf{v}$ defining $G(y,z)$.
\ENSURE Polynomials $B(y,z)$ and $A(y,z)$ of degree 
$\leq d$ with $G(y,z)=\frac{B(y,z)}{A(y,z)}$~\,
\STATE Let $G(y,z)=\mathbf{u} \mathbf{v}$, $v_0(y)=\mathbf{v}$;
\FOR{$\enn=1$ \UPTO $2d$}
	\STATE\label{lin:apply} $v_\enn(y)=(\mathbf{P}+y\mathbf{Q})v_{\enn-1}(y)$;
        \STATE\label{lin:dotp} $G(y,z)=G(y,z)+\mathbf{u} v_\enn(y) z^\enn$;
\ENDFOR
\STATE Let $\overline{d} = (2d+2) \log_{2^\gamma}( ||\mathbf{P},\mathbf{Q},\mathbf{u},\mathbf{v}||_\infty )$;
\FOR{$i=0$ \UPTO $\overline{d}$}
	\STATE Let $p_i\geq 2^\gamma>d$ be a prime, coprime with
        $p_0,\ldots,p_{i-1}$;
        \STATE $G_i(y,z) = G(y,z) \mod p_i$;
        \FOR{$j=0$ \UPTO $d$}
        	\STATE Let $y_j$ be an element modulo $p_i$, distinct from $y_0,\ldots,y_{j-1}$;
                \STATE $G_{i,j}(z) = G_i(y_j,z) \mod p_i$;
                \STATE\label{lin:fr}
                $\frac{B_{i,j}(z)}{A_{i,j}(z)}=${\bf FractionReconstruction}$(G_{i,j}(z)) \mod p_i \mod (y-y_j)$;
\\\hfill\COMMENT{$B_{i,j}(z)=\sum_{\enn=0}^d B_{i,j,\enn}z^\enn$ and
                $A_{i,j}(z)=\sum_{\enn=0}^d A_{i,j,\enn}z^\enn$~\,}

        \ENDFOR
        \FOR{$\enn=0$ \UPTO $d$}
        	\STATE\label{lin:intb} {\bf Interpolate} $B_{i,\enn}(y) \mod p_i$ from $B_{i,j,\enn}$ for $j=0..d$;
        	\STATE\label{lin:inta} {\bf Interpolate} $A_{i,\enn}(y) \mod p_i$ from $A_{i,j,\enn}$ for $j=0..d$;
        \ENDFOR                
                \hfill\COMMENT{$B_{i}(y,z)=\sum_{\enn=0}^d B_{i,\enn}(y)z^\enn$ and
                $A_{i}(y,z)=\sum_{\enn=0}^d A_{i,\enn}(y)z^\enn$~\,}
\ENDFOR
\FOR{$\enn=0$ \UPTO $d$}
	\FOR{$l=0$ \UPTO $d$}
		\STATE\label{lin:crab} {\bf ChineseRemainder} 
                $[y^\enn z^l]B(y,z)$ from
                $[y^\enn z^l]B_i(y,z)$ for $i=0..\overline{d}$;
		\STATE\label{lin:craa} {\bf ChineseRemainder} 
                $[y^\enn z^l]A(y,z)$ from
                $[y^\enn z^l]A_i(y,z)$ for $i=0..\overline{d}$;
                \STATE\label{lin:rr} {\bf RationalReconstruct} both obtained coefficients;
        \ENDFOR                
\ENDFOR
\end{algorithmic}
\end{algorithm}

\begin{proposition}\label{prop:fr}
Let $d=\max\{d_A,d_B\}$ be the degree in $z$ and $\nu$ be the largest
degree in $y$ of the coefficients of $A$ and $B$. Let $\Omega=|{\mathcal
  A}|R$ be the total number of nonzero coefficients of the $R\times R$
matrices $\mathbf{P}$ and $\mathbf{Q}$. If the coefficients of the matrices $\mathbf{P}$, $\mathbf{Q}$,
$\mathbf{u}$ and $\mathbf{v}$, and $|{\mathcal A}|$ are constants, then
the cost of the computation of $B$ and $A$ in 
Algorithm~\ref{alg:rr} is  
$$O\left( d^3 R \log(R) \right),$$
where the intermediate memory requirements are of the same order of magnitude.
\end{proposition}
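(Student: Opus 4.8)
The plan is to account separately for the cost of the four enumerated steps of Algorithm~\ref{alg:rr}, always keeping in mind that by Corollary~\ref{cor:s} the series coefficients $g_\enn(y) = \mathbf{u}(\mathbf{P}+y\mathbf{Q})^\enn \mathbf{v}$ are the building blocks, and that we need $2d$ of them to drive a fraction reconstruction in $z$ of a fraction whose numerator and denominator have $z$-degree at most $d$. First I would bound the number of moduli and evaluation points: the rational coefficients of $B$ and $A$ have numerator and denominator bounded in terms of $R\log(R)$ (as recalled in the naive-complexity discussion), so $\overline d = O(d\log(R))$ primes of bit-size $\gamma = O(\log R)$ suffice for the Chinese remaindering, and $d+1$ evaluation points $y_j$ suffice to interpolate the $y$-coefficients, which have degree $\nu \le d$ in $y$. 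Since $|\mathcal A|$ and the matrix/vector entries are treated as constants, $\gamma$ is a constant and each prime field operation costs $O(1)$.

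Next I would cost the series generation (lines~\ref{lin:apply}--\ref{lin:dotp}). Each of the $2d$ iterations applies $\mathbf{P}+y\mathbf{Q}$ to a vector $v_{\enn-1}(y)$ whose entries are polynomials in $y$ of degree at most $\enn \le 2d$; the sparse matrix--vector product uses $\Omega = |\mathcal A|R = O(R)$ scalar multiplications, each on polynomials of degree $O(d)$, for a cost of $O(dR)$ per step and $O(d^2R)$ over all $2d$ steps; reducing these modulo the $\overline d = O(d\log R)$ primes costs $O(d^2 R \cdot d\log R) = O(d^3R\log R)$, which already matches the claimed bound. (Alternatively one generates the series directly modulo each prime; either way the arithmetic is linear in the number of nonzeros and the exponents stay $O(d)$.) Then, for each of the $O(d\log R)$ primes and each of the $d+1$ evaluation points, the univariate fraction reconstruction in $z$ on $2d$ coefficients — a truncated extended-Euclidean / Padé computation — costs $\widetilde O(d)$, or $O(d^2)$ with the schoolbook half-gcd-free variant; summing gives $O(d\log R \cdot d \cdot d^2) = O(d^4\log R)$ in the worst case, so here I would invoke fast (quasi-linear) polynomial gcd to bring the per-reconstruction cost to $\widetilde O(d)$ and keep the total at $O(d^2 \cdot d\log R) = \widetilde O(d^3 \log R)$, absorbed into $O(d^3 R\log R)$ since $R \ge 1$; if the paper only claims the stated bound up to the constant-arithmetic convention, the schoolbook version still fits because $d \le R$.

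Then the interpolation step (lines~\ref{lin:intb}--\ref{lin:inta}): for each prime and each of the $d+1$ values of $\enn$, interpolating a degree-$\le d$ polynomial in $y$ from $d+1$ point-values costs $O(d^2)$ (or $\widetilde O(d)$ with fast interpolation), so $O(d\log R \cdot d \cdot d^2) = O(d^4\log R)$ worst case, again $\le O(d^3R\log R)$ since $d \le R$. Finally the Chinese remaindering and rational reconstruction over the $d^2$ pairs of $(y^\enn z^l)$-coefficients (lines~\ref{lin:crab}--\ref{lin:rr}): each CRT/rational-reconstruction on $\overline d = O(d\log R)$ residues of constant-size primes costs $O((d\log R)^2)$ naively, hence $O(d^2 \cdot d^2\log^2 R) = O(d^4\log^2 R)$, still dominated by $O(d^3 R\log R)$ whenever $R \ge d\log^2 R$ — or use fast CRT/rational reconstruction, $\widetilde O(d\log R)$ each, for a total $\widetilde O(d^2\cdot d\log R)$, comfortably within the bound. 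Collecting the four contributions, the dominating term is the series generation reduced modulo all primes, $O(d^3R\log R)$, which is the asserted complexity; the memory bound follows because at no point do we store more than $O(d\log R)$ residual copies of $O(d)$-sized objects over $O(d^2)$ coefficient slots, i.e. $O(d^3\log R)$ words, of the same order (and $O(dR)$ for the current series vector, which is smaller).

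The main obstacle I anticipate is the bookkeeping of the three nested "magnitudes" — the bit-size $\gamma$ of the primes, the number $\overline d$ of primes, and the $y$-degree $\nu$ — and confirming that under the stated "entries are constants" hypothesis the only surviving factors are powers of $d$, a single $R$ from the sparse matrix--vector product, and one $\log R$ from the height of the rational coefficients. In particular one must justify carefully that $2d$ series terms genuinely suffice for the $z$-fraction reconstruction to return $A$ and $B$ over every prime field and every specialization $y=y_j$ simultaneously (a good-reduction / non-vanishing-of-resultants argument, which the paragraph before the subsection already foreshadows), so that the algorithm is not merely heuristic; and that the degree bound $d$ and height bound $O(R\log R)$ quoted from the naive analysis are legitimate, which follows from $G(y,z) = \mathbf u(\mathbf I - z\mathbf P - yz\mathbf Q)^{-1}\mathbf v\, z^m$ in Equation~(\ref{eq:mgfG}) via Cramer's rule on an $R\times R$ matrix with entries linear in $y,z$.
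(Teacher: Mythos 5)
Your proposal is correct and follows essentially the same route as the paper's proof: $2d$ series terms, sparse matrix--vector products whose cost grows with the $y$-degree and coefficient height to give the dominant $O(d^3R\log R)$ term, and quadratic fraction reconstruction, interpolation, CRT and rational reconstruction all absorbed via $d\le R$ (you track the $\log R$ coming from the coefficient height slightly more explicitly than the paper, which parks it in $\log\|\mathbf{P},\mathbf{Q},\mathbf{u},\mathbf{v}\|_\infty=O(1)$, and you additionally raise the good-reduction issue, which the paper's proof does not discuss). One small slip: the current exact series vector $v_\enn(y)$ occupies $O(Rd^2\log R)$ words (each of its $R$ entries has degree $O(d)$ in $y$ with coefficients of bit-size $O(d\log R)$), not $O(dR)$, but this still sits within the claimed memory bound.
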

\begin{proof}
Polynomial fraction reconstruction of degree $d$ requires $2d$
coefficients. The computation of one coefficient of the evaluated
series modulo costs one matrix-vector product, $\Omega$ word operations,
and a dot product of size $R\leq \Omega$. 
By definition $\deg(g_j(y))=\deg((\mathbf{P}+y\mathbf{Q})^j)\leq j$, thus
$\nu\leq 2d$ and, similarly, the size of the rational coefficients
is bounded by
$(2d+2)\log(R ||\mathbf{P},\mathbf{Q},\mathbf{u},\mathbf{v}||_\infty^2)$.

Thus steps 3 and 4 in Algorithm
\ref{alg:rr} cost $$\sum_{\enn=0}^d O(\Omega \enn^2\log(R
||\mathbf{P},\mathbf{Q},\mathbf{u},\mathbf{v}||_\infty^2)=O(d^3
\Omega\log(R ||\mathbf{P},\mathbf{Q},\mathbf{u},\mathbf{v}||_\infty^2)$$ operations.

Then a fraction reconstruction of degree $d$ costs less than
$O(d^2)$ operations by Berlekamp-Massey or the extended Euclidean
algorithm and an interpolation of $d$ points costs less than $O(d^2)$
operations so that, overall, 

steps 12, 14 and 15
cost less than $O(d^4)$ operations.
Then Chinese remaindering and rational reconstruction of
size $d$ costs less than $O(d^2)$
for an overall cost of $$O(d^4\log^2(||\mathbf{P},\mathbf{Q},\mathbf{u},\mathbf{v}||_\infty).$$
As $d\leq R\leq \Omega$, if $\log_{2^\gamma}^2(||\mathbf{P},\mathbf{Q},\mathbf{u},\mathbf{v}||_\infty) =
O(1)$, then this latter term is dominated by $O( d^3\Omega\log(R)
)$. Finally, if $|{\mathcal A}|$ is constant, we have that $\Omega = O(R)$.

Now, the memory requirements are bounded by those of the series.
The vector  $v_\enn(y)$ is of size $R$, of degree $\enn$ in $y$ with
rational coefficients of size $\enn \log(R
||\mathbf{P},\mathbf{Q},\mathbf{u},\mathbf{v}||_\infty^2)$. Thus
$v_\enn(y)$ and the dot product $\mathbf{u} v_\enn(y)$ are of size $O( R \enn^2 
\log(R) )$ so that $G(y,z)= \sum_{\enn=0}^{2d} \mathbf{u}
v_\enn(y)$ is $O(R\log(R) d^3)$.
\end{proof}

Thus the dominant computation in Algorithm~\ref{alg:rr} is the computation
of the first terms of the series $G(y,z)$.

\subsection{Early termination strategy for the determination of the
  fraction degrees} 
\jgdhol{There remains to determine the value of the degree $d=\max\{d_A,d_B\}$
for the actual solutions $A$ and $B$}. As the series is
the solution of a polynomial linear system of size $R$ and degree $1$,
the determinant, and thus the denominator and numerator of the
solution are of degree bounded by $R$.
Now in practice, we will see that very often this degree is much
smaller than $R$. As the complexity is cubic in $d$ it is of major
importance to determine as accurately as possible this $d$ beforehand.

The strategy we propose is an early termination, probabilistic of
Las Vegas type, i.e. it is 
always correct, but sometimes slow.
We reconstruct the rational fraction only from a small number
$d_0$ of iterations, and then again after $d_0+1$ iterations. If both
fractions are identical with numerator and denominator of degrees
strictly less than $d_0$, then there is a good chance that the
recovered fraction is the actual one. This can be checked by just
applying $A$ to the obtained guess and verifying that it
corresponds to the right-hand side.
If the fractions disagree or if the check fails, one can then try
again after some more iterations. 

In our bivariate case over the rationals, we have a very fast strategy
which consists in finding first the degree at a single evaluation point
modulo a single prime and e.g. roughly doubling the number of
iterations at each failure. This search thus requires less than
$2\times 2d$ iterations and has then a marginal cost of $O(d\Omega+d^2)$.

\subsection{High-order lifting for polynomial fraction development} 
Once the bivariate fraction $\frac{B}{A}$ is recovered, the next step
is to compute the coefficients of degree $\ell \in [\alpha,\beta]$ of its
series development. 
The idea of the high-order lifting of \cite{Storjohann:2003:jsc} 
is to make use of some particular points in the
development, that are computable independently.
Then these points will enable fast computation of only
high-order terms of the development and not of the whole series.
In the following, we call these points {\em  residues}.

We first need the fundamental Lemma~\ref{lem:res}.
\begin{lemma}\label{lem:res} Let $A,B \in \mathbb{R}[z]$ be of respective degrees
  $d_A$ and $d_B
  \leq d_A-1$. Then for all $\ell \in \N$, there exists $B_\ell \in
  \mathbb{R}[z]$ of degree $d_{B_\ell}\leq d_A-1$ and $(g_i)_{i=0..\ell-1}\in
  \mathbb{R}^\ell$ such that
$$\frac{B(z)}{A(z)} = \sum_{i=0}^{\ell-1} g_i z^i + \frac{B_\ell(z)}{A(z)} z^\ell$$
\end{lemma}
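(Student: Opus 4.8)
The plan is to prove Lemma~\ref{lem:res} by a direct Euclidean-division argument on the truncated power series. First I would observe that the statement is essentially a normal-form result: it says that after peeling off the first $\ell$ terms of the Taylor development of $B/A$, the remaining tail is again a proper rational fraction with the \emph{same} denominator $A$ and a numerator of degree at most $d_A-1$. So the natural object to introduce is the polynomial $S_\ell(z) \eqdef \sum_{i=0}^{\ell-1} g_i z^i$, the degree-$(\ell-1)$ truncation of the series $B/A = \sum_{i\geq 0} g_i z^i$, whose coefficients $g_i$ are well defined by the formal power series inversion of $A$ (legitimate since $A(0)\neq 0$ because $d_B \leq d_A - 1$ forces $A$ to be a unit in $\mathbb{R}[[z]]$; if $A(0)=0$ one divides numerator and denominator by the appropriate power of $z$, but in our setting $A(0)\neq 0$ can be assumed).

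The key step is then to \emph{define} $B_\ell$ by the forced identity
\begin{equation}\label{eq:Bldef}
B_\ell(z) \eqdef \frac{B(z) - A(z) S_\ell(z)}{z^\ell},
\end{equation}
and to check two things: that this is actually a polynomial, and that its degree is at most $d_A-1$. For the first point, I would argue that $B(z) - A(z)S_\ell(z)$ vanishes to order at least $\ell$ at $z=0$: indeed, modulo $z^\ell$ we have $A(z)S_\ell(z) \equiv A(z)\cdot(B(z)/A(z)) \equiv B(z)$, since $S_\ell$ agrees with the power series $B/A$ up to order $\ell-1$. Hence $z^\ell \mid B - A S_\ell$ in $\mathbb{R}[z]$ and $B_\ell$ as defined by \eqref{eq:Bldef} lies in $\mathbb{R}[z]$. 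Dividing \eqref{eq:Bldef} through by $A$ and rearranging immediately gives the claimed identity $\frac{B}{A} = S_\ell + \frac{B_\ell}{A}z^\ell$.

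The main obstacle — though a mild one — is the degree bound $d_{B_\ell} \leq d_A - 1$. I would get it from the identity $B = A S_\ell + z^\ell B_\ell$ by comparing degrees: $\deg(A S_\ell) \leq d_A + (\ell-1)$, and $\deg B \leq d_A - 1 < d_A + \ell - 1$, so the top-degree part of $A S_\ell$ must be cancelled by $z^\ell B_\ell$, which constrains $\deg(z^\ell B_\ell) \leq d_A + \ell - 1$, i.e. $\deg B_\ell \leq d_A - 1$. A cleaner way to see the same thing is to note that $\frac{B_\ell}{A} = z^{-\ell}\bigl(\frac{B}{A} - S_\ell\bigr) = \sum_{i\geq 0} g_{\ell+i} z^i$ is a formal power series, so $B_\ell/A$ is a proper rational fraction, whence $\deg B_\ell < \deg A = d_A$. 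Finally I would remark that $B_\ell$ and the $g_i$ are uniquely determined (the $g_i$ by the series expansion, $B_\ell$ by \eqref{eq:Bldef}), which is what makes the "residues" of the subsequent high-order lifting well defined; this uniqueness is not strictly required by the statement but is worth recording for the application that follows.
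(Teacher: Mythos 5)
Your proof is correct and follows essentially the same route as the paper's: both define $B_\ell$ as $z^{-\ell}\bigl(B - A\sum_{i=0}^{\ell-1}g_i z^i\bigr)$, observe that this difference has valuation at least $\ell$ as a power series and degree at most $d_A+\ell-1$ as a polynomial, and conclude. Your version is in fact slightly more careful than the paper's (which elides the $A(0)\neq 0$ hypothesis needed for the series expansion and states the intermediate identity for $\overline{B_\ell}$ without the factor $A$), so no further changes are needed.
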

\begin{proof} We use the same construction as~\cite{Salvy:2009:cours7}:
the initial series \jgdhol{rewrites as 
$\frac{B}{A} = \sum_{i=0}^{\infty} g_i z^i =
\sum_{i=0}^{\ell-1} g_i z^i +  z^\ell \sum_{i=0}^{\infty} g_{i+\ell}
z^i$}.
Then let $\overline{B_\ell} = B - A \left(\sum_{i=0}^{\ell-1} g_i
  z^i\right)$.
By construction, $degree(\overline{B_\ell}) = d_A+\ell-1$, but we also
have that $\overline{B_\ell} = z^\ell \sum_{i=0}^{\infty} g_{i+\ell}
z^i$.
We thus let $B_\ell = \overline{B_\ell} z^{-\ell}$ which satisfies the
hypothesis.
\end{proof}

The question is how to compute efficiently the $\ell$th residue
$B_\ell$ defined in Lemma~\ref{lem:res}.
The idea is to use the high-order lifting of
\cite{Storjohann:2003:jsc}.

We follow the presentation of the lifting of \cite{Salvy:2009:cours7} but
define a slightly different bracket notation for chunks of a polynomial:
\begin{equation}\label{eq:chunk}
[A(z)]_\alpha^\beta = a_\alpha z^\alpha + \ldots + a_\beta z^\beta
\end{equation}

Roughly there are two main parts. The first one generalizes the
construction of Lemma~\ref{lem:res} using only $2d$ coefficients of
$A^{-1}$. The second part builds small chunks of size $2d$ of $A^{-1}$
at high-orders, each being close to a power of $2$.

The efficient computation of residues given in Algorithm~\ref{alg:bk}
\jgdhol{takes simple advantage} of the fact that a given residue has a small
degree and depends only
on a small part of the development of $A^{-1}$. We first give a
version where the adequate part of $A$ is given as input. We will
later detail the way to efficiently compute these coefficients.

\begin{algorithm}[ht]
\caption{Residue($A$,$B$,$j$,$V$).}\label{alg:bk}
\begin{algorithmic}[1]
\REQUIRE $A$, $B$, $j$ and $V= [A^{-1}]_{j-2d+1}^{j-1}$.
\ENSURE $B_j$ defined by Lemma~\ref{lem:res}.
\STATE Compute $U \stackrel{\text{def}}{=} [VB]_{j-d}^{j-1}$;
\STATE Return $B_j \stackrel{\text{def}}{=} z^{-j}  [B - AU]_{j}^{j+d-1} $;
\end{algorithmic}
\end{algorithm}

\begin{lemma} The arithmetic complexity of Algorithm~\ref{alg:bk} is
  $2d^2$ operations.
\end{lemma}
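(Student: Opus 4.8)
The plan is to count the arithmetic operations in each of the two lines of Algorithm~\ref{alg:bk}, using that every polynomial involved has degree $O(d)$ so that a single multiplication of two such polynomials costs $O(d^2)$ by the schoolbook method, and that taking a bracket $[\cdot]_\alpha^\beta$ is free (it only selects already-computed coefficients). The key observation is that although $V$, $B$ and $A$ are given, we never need to form a full product of two degree-$(2d)$ polynomials: in each step only a window of $d$ consecutive coefficients of a product is required, and each such coefficient is a convolution sum of length at most $d$.

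First I would treat line~1. Here $V=[A^{-1}]_{j-2d+1}^{j-1}$ has at most $2d-1$ nonzero coefficients, supported on exponents $j-2d+1,\dots,j-1$, and $B$ has degree $d_B\le d_A-1\le d-1$. We only want $U=[VB]_{j-d}^{j-1}$, i.e. the $d$ coefficients of the product $VB$ of degrees $j-d,\dots,j-1$. For each such target degree $e$, the coefficient $[z^e](VB)=\sum_{s} V_s B_{e-s}$ runs over at most $d$ values of $s$ for which $B_{e-s}\neq 0$ (since $B$ has only $d$ coefficients), so each coefficient costs at most $d$ multiply-adds, and there are $d$ of them: $d^2$ operations for line~1.

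Then line~2: we form $AU$ where $A$ has degree $d_A\le d$ and $U$ (from line~1) has $d$ coefficients, and we only need the window $[B-AU]_{j}^{j+d-1}$, again $d$ coefficients. Subtracting $B$ affects only the low-degree part and costs $O(d)$ at most, so the dominant work is the $d$ coefficients of $AU$ in that window, each a convolution sum of length at most $d$ (bounded by the number of nonzero coefficients of $U$): another $d^2$ operations. The final multiplication by $z^{-j}$ is just a relabeling of exponents and is free. Summing the two lines gives $2d^2$ operations, matching the claimed bound.

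The main obstacle — really the only subtlety — is being careful that the restricted brackets genuinely let us avoid the full products: one must check that the coefficients of $VB$ outside the window $[j-d,j-1]$ are never needed downstream (they are not: line~2 uses only $U$, which is already truncated), and that in line~2 the coefficients of $B-AU$ below degree $j$ or above degree $j+d-1$ are likewise irrelevant (they are discarded by the bracket, and the correctness of this truncation is exactly what Lemma~\ref{lem:res} together with the high-order lifting identity guarantees). Once that is granted, the count is the routine "each of two schoolbook products contributes $d^2$" argument sketched above.
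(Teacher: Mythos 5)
Your count is correct and matches the paper's (implicit) reasoning: the paper leaves this lemma unproved but, as in its proofs of the analogous lemmas for Algorithms~\ref{alg:dol} and~\ref{alg:hol}, the intended argument is exactly that each of the two lines is one truncated polynomial product of a window of $d$ coefficients, each coefficient a convolution of length at most $d$, giving $d^2+d^2=2d^2$. Your additional care about why the discarded coefficients outside the windows are never needed is a valid (and welcome) elaboration, not a departure.
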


Then, we define $\Gamma_\ell$ to be the high-order residue of the
expansion of $A^{-1}$, using Lemma~\ref{lem:res} with $B=1$:
\begin{equation}\label{eq:gamma}
\frac{1}{A(z)} = \sum_{i=0}^{\ell-1} g_i z^i +
\frac{\Gamma_\ell(z)}{A(z)} z^\ell
\end{equation}
The idea of the fast lifting is that when substituting $A^{-1}$ in the
right hand side of \jgdhol{Equation~(\ref{eq:gamma})} by this actual right hand
side, one gets:
$$\sum_{i=0}^{\ell-1} g_i z^i +
\Gamma_\ell(z) \left( \sum_{i=0}^{\ell-1} g_i z^i +
\frac{\Gamma_\ell(z)}{A(z)} z^\ell \right) z^\ell
= \sum_{i=0}^{2\ell-1} g_i z^i +
\frac{\Gamma_{2\ell}(z)}{A(z)} z^{2\ell}
$$
This shows that $\Gamma_{2\ell}$ depends only on $\Gamma_\ell$ and of
chunks of $A^{-1}$, of size $d$, at $0$ and around $\Gamma_\ell$; more
generally one gets the following formula:
\begin{equation}\label{eq:invchunk}
\left[A^{-1}\right]_\alpha^\beta = z^\ell \left[ \Gamma_\ell
  [A^{-1}]_{\alpha-\ell-d}^{\beta-\ell} \right]_{\alpha-\ell}^{\beta-\ell}
\end{equation}
\jgdhol{This formula states, from Equation~(\ref{eq:gamma})}, that the Taylor
coefficients of order greater than
$\ell$, can also be recovered from the Taylor development of
$\frac{\Gamma_\ell(z)}{A(z)}$.

Then the second part of the high-order lifting is thus
Algorithm~\ref{alg:dol} which gets a small chunk of $A^{-1}$ at a double
order of what it is given as input, as shown in Figure~\ref{fig:dbord}.

\begin{algorithm}[ht]
\caption{Double-Order($S, T, \Gamma,e$).}\label{alg:dol}
\begin{algorithmic}[1]
\REQUIRE $S = [A^{-1}]_{0}^{d-1}$, $V_e = [A^{-1}]_{2^e-2d+1}^{2^e-1}$
and $\Gamma_{2^e-d}$ defined by eq.~(\ref{eq:gamma}).
\ENSURE $\Gamma_{2^{e+1}-d}$ and $V_{e+1} = [A^{-1}]_{2^{e+1}-2d+1}^{2^{e+1}-1}$.
\STATE Compute $V_L \stackrel{\text{def}}{=} z^{2^{e}-d}[ 
\Gamma_{2^{e}-d} V_e ]_{2^{e}-d}^{2^{e}-1}$;
\hfill\COMMENT{eq. (\ref{eq:invchunk})~\,}
\STATE Compute $\Gamma_{2^{e+1}-d}\stackrel{\text{def}}{=} z^{-2^{e+1}+d}[1-A V_L ]_{2^{e+1}-d}^{2^{e+1}-1} $;
\hfill\COMMENT{Residue$(A,1,2^{e+1}-d)$~\,}
\STATE Compute $V_H\stackrel{\text{def}}{=}
z^{2^{e+1}-d} [\Gamma_{2^{e+1}-d}S] _{0}^{d-1} $;
\hfill\COMMENT{eq. (\ref{eq:invchunk})~\,}
\STATE Return $\Gamma_{2^{e+1}-d}$ and
$V_{e+1}=[A^{-1}]_{2^{e+1}-2d+1}^{2^{e+1}-1} \stackrel{\text{def}}{=}
[V_L]_{2^{e+1}-2d+1}^{2^{e+1}-d-1} +V_H$.
\end{algorithmic}
\end{algorithm}

\begin{figure}[htbp]
  \begin{center}
    \includegraphics[width=\textwidth]{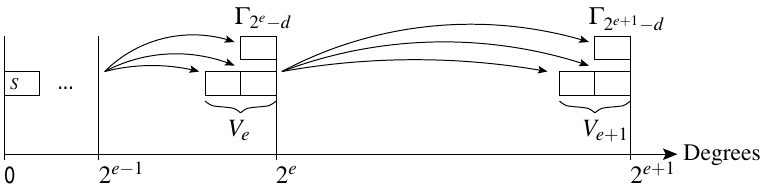}
\caption{Computing chunks at double order.}\label{fig:dbord}
  \end{center}
\end{figure}

\begin{lemma} The arithmetic complexity of Algorithm~\ref{alg:dol} is
  $3d^2$ operations.
\end{lemma}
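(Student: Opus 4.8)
The plan is a direct operation count over Algorithm~\ref{alg:dol}, in the same spirit as the complexity lemma for the \texttt{Residue} subroutine (Algorithm~\ref{alg:bk}): the algorithm performs three polynomial multiplications (Steps~1--3) followed by one polynomial addition (the return step), and the whole point is that each of the three multiplications is \emph{windowed} --- only $d$ consecutive coefficients of the product are ever needed --- so that, given the degree bounds carried along the recursion, each required coefficient is a convolution sum of at most $d$ scalar products and hence each step costs at most $d^2$ operations.

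First I would record the sizes involved. By Lemma~\ref{lem:res} used with numerator $1$, as in Equation~(\ref{eq:gamma}), the residues $\Gamma_{2^e-d}$ and $\Gamma_{2^{e+1}-d}$ have degree at most $d_A-1\leq d-1$, hence at most $d$ coefficients; $A$ has degree $d_A\leq d$; $S=[A^{-1}]_0^{d-1}$ has exactly $d$ coefficients; and the input chunk $V_e=[A^{-1}]_{2^e-2d+1}^{2^e-1}$ has $2d-1$ coefficients. Then I would treat the three steps. In Step~1, $V_L$ is (up to the cost-free shift by $z^{2^e-d}$) the length-$d$ window $[\Gamma_{2^e-d}V_e]_{2^e-d}^{2^e-1}$; each of its $d$ coefficients is a sum of at most $\deg(\Gamma_{2^e-d})+1\leq d$ products, so Step~1 costs at most $d^2$ operations and leaves $V_L$ with $d$ coefficients. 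Step~2 is precisely \texttt{Residue}$(A,1,2^{e+1}-d)$ with the relevant chunk of $A^{-1}$ already supplied as $V_L$; since the numerator is $1$, the chunk-extraction part of \texttt{Residue} is free and the only arithmetic is the length-$d$ window of $A\,V_L$, whose coefficients are sums of at most $d_A+1\leq d+1$ products, so Step~2 costs $d^2+O(d)$ operations and produces $\Gamma_{2^{e+1}-d}$ with at most $d$ coefficients. Step~3 forms $V_H$ as the bottom $d$ coefficients of $\Gamma_{2^{e+1}-d}S$, a product of two polynomials with at most $d$ coefficients each, hence at most $d^2$ operations. Finally the return step adds the two polynomials $[V_L]_{2^{e+1}-2d+1}^{2^{e+1}-d-1}$ and $V_H$, each of length at most $d$, for an extra $O(d)$ operations. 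Summing yields $3d^2+O(d)=3d^2$ operations to leading order, as claimed.

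The correctness of these truncations --- that the windowed products actually recover the intended chunks of $A^{-1}$ and the residue $\Gamma_{2^{e+1}-d}$ --- is not part of the count, being inherited from Equation~(\ref{eq:invchunk}) and Lemma~\ref{lem:res}. Consequently the only thing requiring care is the bookkeeping: verifying that in each of the three steps a window of just $d$ product-coefficients suffices (rather than a full convolution), and that the degree and length bounds above propagate correctly from one step to the next so that the ``at most $d$ products per coefficient'' estimate is uniform. That bookkeeping is the entirety of the (mild) obstacle; there is no deeper difficulty.
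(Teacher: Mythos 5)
Your proof is correct and follows essentially the same route as the paper: a step-by-step count of the three truncated (windowed) polynomial multiplications at cost $d^2$ each. The only cosmetic difference is at the final step, where you charge $O(d)$ for the addition of $[V_L]_{2^{e+1}-2d+1}^{2^{e+1}-d-1}$ and $V_H$ while the paper observes that these two chunks occupy disjoint coefficient ranges, so the combination is a no-op and the total is exactly $3d^2$.
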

\begin{proof}
\jgdhol{Below are the complexity of the successive steps of Algorithm~\ref{alg:dol}.}
\begin{enumerate}[1)]
\item One truncated polynomial multiplication of degree $d-1$, complexity $d^2$;
\item One truncated polynomial multiplication of degree $d-1$, complexity $d^2$;
\item One truncated polynomial multiplication of degree $d-1$, complexity $d^2$.
\item No-op, $V^{(L)}$ and $V^{(H)}$ do not overlap.
\end{enumerate}
\end{proof}

Then Algorithm~\ref{alg:hol} gives the complete precomputation to get
a sequence of doubling order $\Gamma$'s which will enable fast
computations of the chunks of the Taylor expansion.

\begin{algorithm}[ht]
\caption{High-Order($A,\alpha,\beta$).}\label{alg:hol}
\begin{algorithmic}[1]
\REQUIRE A polynomial $A(z)$ of degree $d$.
\REQUIRE A valuation $\alpha$ and a degree $\beta \geq d$.
\ENSURE $e_0$ s.t. $2^{e_0-1}<2d\leq2^{e_0}$ ; $e_\beta$ s.t. 
$2^{e_\beta}\leq \beta+d< 2^{e_\beta+1}$.
\ENSURE The Taylor development of $\frac{1}{A}$ up to
$\delta=\max\{2^{e_0}-1;\beta-\alpha\}$.
\ENSURE $\left(\Gamma_{2^{e_0}-d},\ldots,\Gamma_{2^{e_\beta}-d}\right)$.
\STATE Compute $e_0$ s.t. $2^{e_0-1}<2d\leq2^{e_0}$ ; $e_\beta$ s.t. 
$2^{e_\beta}\leq \beta+d< 2^{e_\beta+1}$;
\STATE Let $\xi_0 = k_{e_0}\stackrel{\text{def}}{=}2^{e_0}-d$ and $\delta\stackrel{\text{def}}{=}\max\{2^{e_0}-1;\beta-\alpha\}$;
\STATE Compute $S\stackrel{\text{def}}{=}[A^{-1}]_0^{\delta}$, via Taylor expansion of $A$;
\STATE $U_0 \stackrel{\text{def}}{=} [A^{-1}]_{\xi_0-d}^{\xi_0-1} = [S]_{\xi_0-d}^{\xi_0-1}$;
\STATE Compute $\Gamma_{\xi_0} \stackrel{\text{def}}{=} z^{-\xi_0}[I - A U_0]_{\xi_0}^{\xi_0+d-1}$;
\hfill\COMMENT{Residue$(A,1,\xi_0)$~\,}
\STATE $V_{e_0}\stackrel{\text{def}}{=}[A^{-1}]_{\xi_0-d+1}^{\xi_0+d-1}=[S]_{2^{e_0}-2d+1}^{2^{e_0}-1}$;
\FOR {$i=e_0+1$ \UPTO $e_\beta$}
	\STATE $k_i\stackrel{\text{def}}{=}2^i-d$;
	\STATE $\left(\Gamma_{k_i};V_{i}\right) \stackrel{\text{def}}{=}
        $Double-Order$([A^{-1}]_0^{d-1},V_{i-1},
        \Gamma_{k_{i-1}},i-1)$;
\ENDFOR
\end{algorithmic}
\end{algorithm}

Figure~\ref{tab:ho50} shows which high-order terms are recovered during
these giant steps of precomputation, for a computation using $50$
precomputed terms of the Taylor development with
Algorithm~\ref{alg:hol} and $degree(A)=6$.

\begin{figure}[htp]\center
$$ \xymatrix@C=20pt@R=25pt{
[A^{-1}]_0^{50} \ar[rd]\ar[rrd]\ar[rrrd]\ar[rrr] & & & [A^{-1}]_{53}^{63}\ar[r]\ar[rd] & [A^{-1}]_{117}^{127}\ar[r]\ar[rd] & [A^{-1}]_{245}^{255}\ar[r]\ar[rd] & [A^{-1}]_{501}^{511} \\
 & \Gamma_{10} \ar[r] & \Gamma_{26}\ar[r]\ar[ru]& \Gamma_{58}\ar[r]\ar[ru] & \Gamma_{122}\ar[r]\ar[ru] & \Gamma_{250}\ar[r]\ar[ru] & \Gamma_{506} \\
} 
$$
\caption{High-Order lifting to $2^{\lfloor \log_2(\beta) \rfloor}-1$
  computing $\Gamma_{2^4-6}$, $\Gamma_{2^5-6}$, $\Gamma_{2^6-6}$,
  $\Gamma_{2^7-6}$, $\Gamma_{2^8-6}$ and $\Gamma_{2^9-6}$.}\label{tab:ho50}
\end{figure}

\begin{lemma}\label{lem:hol}
The arithmetic complexity of Algorithm~\ref{alg:hol} is
  less than: $$3\log_2\left(\frac{\beta+d}{2d}\right)d^2+\max\{4d^2;d(\beta-\alpha+2d)\}$$
\end{lemma}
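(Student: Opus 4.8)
The plan is to sum up the costs of the individual pieces of Algorithm~\ref{alg:hol}, using the complexity results already established for the subroutines. The algorithm consists of three parts: the initial Taylor expansion of $A^{-1}$ to order $\delta$ (Step 3), the initial residue computation $\Gamma_{\xi_0}$ (Step 5, a call to Residue), and the main loop of $e_\beta-e_0$ iterations, each calling Double-Order exactly once.

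First I would handle the loop. By the lemma on Algorithm~\ref{alg:dol}, each call to Double-Order costs $3d^2$ operations, and the loop runs for $i=e_0+1,\ldots,e_\beta$, i.e. $e_\beta-e_0$ times. From the output specification, $2^{e_0-1}<2d\leq 2^{e_0}$ and $2^{e_\beta}\leq\beta+d<2^{e_\beta+1}$, so $e_\beta-e_0 < \log_2(\beta+d)-\log_2(2d)+1 = \log_2\!\left(\frac{\beta+d}{2d}\right)+1$; being slightly more careful with the floor/ceiling one gets $e_\beta-e_0\leq\log_2\!\left(\frac{\beta+d}{2d}\right)$, so the loop contributes at most $3\log_2\!\left(\frac{\beta+d}{2d}\right)d^2$. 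This accounts for the first term in the claimed bound.

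Next, the initial Taylor expansion in Step 3 computes the first $\delta+1$ coefficients of $1/A$ where $\delta=\max\{2^{e_0}-1;\beta-\alpha\}$; done iteratively from the recurrence $g_i = -\frac{1}{a_0}\sum_{k=1}^{d} a_k g_{i-k}$, this costs $O(d\,\delta)$, and with $\delta\leq\max\{2^{e_0}-1;\beta-\alpha\}\leq\max\{4d-1;\beta-\alpha\}$ (using $2^{e_0}\leq 4d$ from $2^{e_0-1}<2d$) one bounds this by roughly $d\cdot\max\{4d;\beta-\alpha+2d\}\leq\max\{4d^2;d(\beta-\alpha+2d)\}$ after absorbing the $O(d^2)$-sized residue call of Step 5 (cost $2d^2$ by the Residue lemma) and the cheap slicing in Steps 4 and 6 into this same max. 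Collecting the two groups gives the stated bound $3\log_2\!\left(\frac{\beta+d}{2d}\right)d^2+\max\{4d^2;d(\beta-\alpha+2d)\}$.

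The only slightly delicate point is the bookkeeping on $e_0$ and $e_\beta$: one must check that the crude estimates $2^{e_0}\leq 4d$ and $2^{e_\beta}\leq\beta+d$ are what is needed to make the max-term come out exactly as written, and that the number of loop iterations is $\leq\log_2\!\left(\frac{\beta+d}{2d}\right)$ rather than one more than that. Everything else is a direct substitution of the per-step costs already proved for Algorithms~\ref{alg:bk} and~\ref{alg:dol}, so I expect this index arithmetic to be the main (minor) obstacle; there is no conceptual difficulty.
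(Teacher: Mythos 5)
Your decomposition is exactly the paper's: the loop contributes $e_\beta-e_0\le\log_2\left(\frac{\beta+d}{2d}\right)$ calls to Double-Order (Algorithm~\ref{alg:dol}) at $3d^2$ each, and the rest is the initial Taylor expansion of $A^{-1}$ plus the first residue computation. The one place you leave open --- and correctly flag --- is the constant bookkeeping for that second group, and it is where the paper does something slightly sharper: it counts the Taylor expansion coefficient by coefficient as $\sum_{i=1}^{d}(2i-1)+\sum_{i=d+1}^{\delta}(2d-1)\le d(2\delta-d)$ (the $i$-th coefficient costs only $2i-1$ while $i\le d$), takes $\delta\le\max\{2d;\beta-\alpha\}$, and charges step 5 as a single truncated multiplication of cost $d^2$ rather than the full $2d^2$ of Algorithm~\ref{alg:bk} (the product $VB$ is trivial since $B=1$ there); this is how the second term lands at $\max\{4d^2;d(\beta-\alpha+2d)\}$. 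With your looser estimates ($O(d\delta)$ with $\delta\le 4d-1$ from $2^{e_0}<4d$, plus $2d^2$ for the residue) the $4d^2$ branch of the max is not actually attained, so to finish you would need either the paper's finer per-coefficient count or to accept a slightly larger constant in the first branch of the max.
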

\begin{proof}
\jgdhol{Below are the complexity of the successive steps of Algorithm~\ref{alg:hol}.}
\begin{itemize}
\item[3.] One Taylor expansion of an inverse of degree $d$ up to the
  degree $\delta\leq \max\{2d;\beta-\alpha\}$, complexity
  $\sum_{i=1}^{d} 2i-1+\sum_{i=d+1}^{\delta} 2d-1\leq d(2\delta-d) \leq \max\{3d^2;d(\beta-\alpha+d)\}$.
\item[5.] One truncated polynomial multiplication of degree $d-1$, complexity $d^2$.
\item[9.] $e_\beta-e_0 \leq \log_2\left(\frac{\beta+d}{2d}\right)$ calls to
  Algorithm~\ref{alg:dol}, complexity bounded by
  $3\log_2\left(\frac{\beta+d}{2d}\right)d^2$.
\end{itemize}
\end{proof}

Once the high-order terms are computed, one can get the development
chunks of $[\frac{B}{A}]_\alpha^\beta$ as shown in
Algorithm~\ref{alg:dchunk}.

\begin{algorithm}[ht]
\caption{DevelChunk$(A,B,(\Gamma_i),S,\alpha,\beta)$.}\label{alg:dchunk}
\begin{algorithmic}[1]
\REQUIRE \jgd{$A$, $B$, $(\Gamma_i)$ as defined in 
\jgdhol{Equation (\ref{eq:gamma}).}}
\REQUIRE \jgdhol{A valuation} $\alpha$, a degree $\beta$ and $S=[A^{-1}]_0^\delta$, with
$\delta \geq \beta-\alpha$.
\ENSURE $[BA^{-1}]_\alpha^\beta$
\IF{$\beta \leq \delta$}
\STATE Return $[BS]_\alpha^\beta$;
\ELSE
\STATE $B_\alpha\stackrel{\text{def}}{=}$Residue$(A,B,(\Gamma_i),S,\alpha)$;
\STATE Return $z^\alpha[B_\alpha S]_0^{\beta-\alpha}$;
\hfill\COMMENT{eq. (\ref{eq:invchunk})~\,}
\ENDIF
\end{algorithmic}
\end{algorithm}

Algorithm~\ref{alg:dchunk} uses a variant of Algorithm~\ref{alg:bk}, which
needs to compute on the fly the chunks of the inverse it
requires. This variant is shown in Algorithm~\ref{alg:residue}.

\begin{algorithm}[ht]
\caption{Residue$(A,B,(\Gamma_i),S,\alpha)$.}\label{alg:residue}
\begin{algorithmic}[1]
\REQUIRE \jgd{$A$, $B$, $(\Gamma_i)$, $S$, $\alpha$ as in \jgdhol{Algorithm~\ref{alg:dchunk}}.}
\ENSURE \jgd{$B_\alpha$ defined by Lemma~\ref{lem:res}.}
\IF{$\alpha = 0$}
\STATE Return $B$;
\ELSE
\STATE $V\stackrel{\text{def}}{=}$InverseChunk$(A,(\Gamma_i),S,\alpha-2d+1,\alpha-1)$;
\STATE $U\stackrel{\text{def}}{=}[V B]_{\alpha-d}^{\alpha-1}$;
\STATE Return $z^{-\alpha}[B-AU]_\alpha^{\alpha+d-1}$.
\hfill\COMMENT {Residue$(A,B,\alpha,V)$~\,}
\ENDIF
\end{algorithmic}
\end{algorithm}

The point is that these chunks of the development of the inverse are
recovered just like the chunks of any fraction, but with some
high-order residues already computed. 
Algorithm~\ref{alg:ichunk} is thus a variant of
Algorithm~\ref{alg:dchunk} with $B=1$ and a special residue call.

\begin{algorithm}[ht]
\caption{InverseChunk$(A,(\Gamma_i),S,\alpha,\beta)$.}\label{alg:ichunk}
\begin{algorithmic}[1]
\REQUIRE \jgd{$A$, $(\Gamma_i)$, $\alpha$, $\beta$,
  $S=[A^{-1}]_0^\delta$ with $\delta\geq\beta-\alpha$ as in \jgdhol{Algorithm~\ref{alg:dchunk}.}}
\ENSURE $[A^{-1}]_\alpha^\beta$
\IF{$\beta \leq \delta$}
\STATE Return $[S]_\alpha^\beta$;
\ELSE
\STATE $\Gamma_\alpha\stackrel{\text{def}}{=}$Get$\Gamma(A,(\Gamma_i),S,\alpha)$;
\hfill\COMMENT{Residue$(A,1,\alpha)$;~\,}
\STATE Return $z^{\alpha} [\Gamma_\alpha S ]_0^{\beta-\alpha}$;
\hfill\COMMENT{eq. (\ref{eq:invchunk})~\,}
\ENDIF
\end{algorithmic}
\end{algorithm}

Algorithm~\ref{alg:getg} is this special residue call, a variant of
Algorithm~\ref{alg:residue}, which uses the precomputed high-order
$\Gamma_i$ to get the actual $\Gamma_\alpha$ it needs, in a logarithmic
binary-like decomposition of $\alpha$.

\begin{algorithm}[ht]
\caption{Get$\Gamma(A,(\Gamma_i),S,\alpha)$.}\label{alg:getg}
\begin{algorithmic}[1]
\REQUIRE \jgd{$A$, $(\Gamma_i)$, $S$, $\alpha$ as in \jgdhol{Algorithm~\ref{alg:dchunk}.}}
\ENSURE $\Gamma_\alpha$;
\IF{$\alpha=0$}
\STATE Return $1$.
\ELSIF{$\alpha\leq \delta$}
\STATE $U\stackrel{\text{def}}{=}[S]_0^{\alpha-1}$;
\STATE Return $z^{-\alpha}[1-AU]_\alpha^{\alpha+d-1}$.
\hfill\COMMENT {Residue$(A,1,\alpha,S)$~\,}
\ELSE
\STATE \jgd{$a = \lfloor\log_2(\alpha+d)\rfloor$;} \COMMENT{so that
  $2^a \leq \alpha+d < 2^{a+1}$}
\STATE \jgd{Return Residue$(A,\Gamma_a,(\Gamma_i),\alpha+d-2^a)$;}
\ENDIF
\end{algorithmic}
\end{algorithm}

We have shown in Figure~\ref{tab:ho50} the high-order
precomputation of the different $\Gamma$'s required 
\jgdhol{for the computation, e.g., of $[BA^{-1}]_{950}^{1000}=[B_{950}
  A^{-1}]_0^{50}$}, with $A$ of degree $6$.
Then, Figure~\ref{tab:chunks1000} shows the
successive baby step calls of residue and inverse chunks needed to
compute the $950$th residue $B_{950}$
\begin{figure}[htp]
\jgd{$$
 \xymatrix@C=20pt@R=30pt{
[BA^{-1}]_{950}^{1000} & *+[F]{[A^{-1}]_{0}^{50}}\ar[l]&
[A^{-1}]_{939}^{949}\ar[ld]|*[@][*.6]!/^-7pt/{\hspace{-28pt}\text{i-chunk}}&
[A^{-1}]_{422}^{432}\ar[ld]|*[@][*.6]!/^-7pt/{\hspace{-28pt}\text{i-chunk}}&
[A^{-1}]_{161}^{171}\ar[ld]|*[@][*.6]!/^-7pt/{\hspace{-28pt}\text{i-chunk}}&
*+[F]{[A^{-1}]_{28}^{38}}\ar[ld]|*[@][*.6]!/^-7pt/{\hspace{-28pt}\text{i-chunk}}\\
 &
 B_{950}\ar[lu]|*[@][*.6]!/_4pt/{\hspace{-34pt}\text{Residue}} & 
 \Gamma_{939}\ar[u]_{\text{Get}\Gamma}&
 \Gamma_{422}\ar[u]_{\text{Get}\Gamma}&
 \Gamma_{161}\ar[u]_{\text{Get}\Gamma}\\
 & & *+[F]{ \Gamma_{506}}\ar[u]_{\text{Residue}}&
 *+[F]{\Gamma_{250}}\ar[u]_{\text{Residue}}&
 *+[F]{\Gamma_{122}}\ar[u]_{\text{Residue}}\\
}$$} 
\caption{\jgd{$DevelChunk$ expansion (alg.~\ref{alg:dchunk} from right to
  left) where some chunks (boxed) were precomputed by high-order
  lifting (alg.~\ref{alg:hol} and
  fig.~\ref{tab:ho50}).}}\label{tab:chunks1000}
\end{figure}

\begin{lemma}\label{lem:chunk} The arithmetic complexity of computing the
  $[\alpha,\beta]$ chunk of the development of $[B
  A^{-1}(z)]_\alpha^\beta$ via Algorithm~\ref{alg:dchunk} is
  less than: $$\log_2\left(\frac{\beta+d}{2d}\right)\left(3d^2+2(\beta-\alpha)^2\right)$$
\end{lemma}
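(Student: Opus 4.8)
The plan is to unfold the mutual recursion of Algorithms~\ref{alg:dchunk}, \ref{alg:residue}, \ref{alg:ichunk} and~\ref{alg:getg} into a single chain of calls, to charge a cost of at most $3d^2$ to each link of that chain, to bound the number of links by $\log_2(\frac{\beta+d}{2d})$, and finally to add the one remaining truncated product, of cost at most $2(\beta-\alpha)^2$.

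First I would dispose of the base case $\beta\le\delta$ of Algorithm~\ref{alg:dchunk} (which also covers $\alpha=0$): there the only arithmetic is the single truncated product $[BS]_\alpha^\beta$, and since $\deg B\le d-1$ this costs at most $d(\beta-\alpha+1)$, which is dominated by the claimed bound in the regime $\beta\ge 3d$ relevant to high-order chunk extraction (where $\log_2(\frac{\beta+d}{2d})\ge 1$). In the main case Algorithm~\ref{alg:dchunk} performs exactly one call to Residue$(A,B,(\Gamma_i),S,\alpha)$ and then forms $z^\alpha[B_\alpha S]_0^{\beta-\alpha}$; since only the coefficients of degree $\le\beta-\alpha$ are kept on both factors, this last product costs at most $(\beta-\alpha+1)^2\le 2(\beta-\alpha)^2$. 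It thus remains to bound the cost of the Residue call, which is where the recursion lives.

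Next I would isolate one ``round'' of the recursion and show it costs at most $3d^2$. A call Residue$(A,\cdot,(\Gamma_i),S,\alpha)$ with $\alpha\neq 0$ triggers one recursive call to InverseChunk for the width-$(2d-1)$ window $[A^{-1}]_{\alpha-2d+1}^{\alpha-1}$, together with the two truncated products $U=[VB]_{\alpha-d}^{\alpha-1}$ and $[B-AU]_{\alpha}^{\alpha+d-1}$; InverseChunk in turn triggers one recursive call to Get$\Gamma$ plus the truncated product $[\Gamma S]_0^{2d-2}$; and Get$\Gamma$, in its recursive branch, sets $a=\lfloor\log_2(j+d)\rfloor$ and calls Residue anew with the precomputed $\Gamma_a$ at index $j'=j+d-2^a$. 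Hence one round is exactly three truncated multiplications of polynomials of degree less than $d$ --- the same bookkeeping as in the complexity proof of Algorithm~\ref{alg:dol} --- for a cost of at most $3d^2$; the base cases of InverseChunk ($\beta\le\delta$, no arithmetic) and Get$\Gamma$ ($j=0$, none; $j\le\delta$, one degree-$<d$ product) only make a round cheaper.

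The hard part will be bounding the number of rounds, since this forces one to track the index through the mutual recursion. Along the chain the Residue index evolves as $\alpha\mapsto j=\alpha-2d+1\mapsto j'=j+d-2^a$ where $2^a\le j+d<2^{a+1}$; therefore $j'<2^a\le j+d$ and, since $2^a>\tfrac12(j+d)$, also $j'<\tfrac12(j+d)$. Thus the quantity ``current index plus $d$'' is at least halved at each round; it starts at most at $\alpha+d\le\beta+d$ and the chain terminates as soon as an index drops to $\le\delta$, i.e.\ as soon as the remaining window fits inside the precomputed $S=[A^{-1}]_0^\delta$, which is legitimate because $\delta\ge\beta-\alpha$ (and $\delta\ge 2d-1$ as produced by Algorithm~\ref{alg:hol} guarantees that every $\Gamma_a$ invoked, with $a=\lfloor\log_2(j+d)\rfloor\le e_\beta$, has indeed been precomputed). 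Consequently the number of rounds is at most $\log_2(\frac{\beta+d}{2d})$, so the Residue call costs at most $\log_2(\frac{\beta+d}{2d})\cdot 3d^2$; adding the single final product of cost $\le 2(\beta-\alpha)^2$ and absorbing it into the product with the round count (using $\log_2(\frac{\beta+d}{2d})\ge 1$) yields the announced bound $\log_2(\frac{\beta+d}{2d})(3d^2+2(\beta-\alpha)^2)$.
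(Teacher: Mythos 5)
Your proposal is correct and follows essentially the same route as the paper's proof: charge a constant number of degree-$<d$ truncated products to each round of the Residue/InverseChunk/Get$\Gamma$ cycle, bound the number of rounds by $\log_2\bigl(\frac{\beta+d}{2d}\bigr)$ via the (at least) halving of the current index, and account separately for the quadratic-in-$(\beta-\alpha)$ truncated products. In fact you make explicit the index-evolution argument ($\alpha\mapsto\alpha-2d+1\mapsto j+d-2^a$ with $2^a>\tfrac12(j+d)$) that the paper compresses into the phrase ``logarithmic binary decomposition of $\beta$''; the only difference is bookkeeping of where the $(\beta-\alpha)^2$ terms are charged (the paper charges one to each InverseChunk call, you charge them only at the top level), which does not change the stated bound.
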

\begin{proof}
Except for the calls to other algorithms, DevelChunk
(alg.~\ref{alg:dchunk}) has complexity $(\beta-\alpha)^2$, 
Get$\Gamma$ (alg.~\ref{alg:getg}) has complexity $d^2$, 
InverseChunk (alg.~\ref{alg:ichunk}) has complexity $(\beta-\alpha)^2$
and Residue (alg.~\ref{alg:residue}) has complexity $2d^2$.

Now the logarithmic binary decomposition of $\beta$ shows that
Get$\Gamma$, InverseChunk and Residue are each called less than
$\log_2\left(\frac{\beta+d}{2d}\right)$ times.
\end{proof}

\subsection{Fiduccia's algorithm for linear recurring sequences}
An alternative to the high-order lifting is to directly use 
Fiduccia's algorithm for
linear recurring sequences \citep{Fiduccia:1985:EFLR}. Its complexity
is slightly different and we show next that it can be interesting when
$\beta=\alpha$.

\subsubsection{Single coefficient recovery}
With the same notations as before,
one wants to solve $B = A \times T$ for $B$ and $A$ polynomials of degree
bounded by $d$ and $T$ a series development. We want to obtain
the coefficients of $T$ only between degrees $\alpha$ and $\beta$. 
The algorithm is as follows: solve directly for the first $d$ terms of
$T$ and afterwards, if $A = \sum_{i=0}^d a_i Z^i$, we obtain a recurring linear
sequence for the coefficients of $T=\sum^\infty t_i Z^i$:
\begin{equation}\label{eq:fiduc} a_0 t_\ell = - \sum_{i=1}^d a_i t_{\ell-i} \end{equation}

If $a_0 \neq 0$, let us define the characteristic polynomial $P(Z)=\text{rev}(A)/a_0=A(1/Z)Z^d/a_0$.
This induces the following linear system involving the companion
matrix of $P$, $C=\text{Companion}(P(Z))$:
\[
\left(
    \begin{array}{*{12}c}
     t_{\ell-d+1} \\
     \vdots\\
     t_{\ell}\\
     \end{array}
\right) =
\left(
    \begin{array}{*{12}c}
     0 & 1 & 0 & \ldots & 0 \\
     \vdots & & \ddots& \ddots & \vdots\\
     0 &  & \ldots &  0 & 1\\
     -\frac{a_d}{a_0} &  & \ldots & & -\frac{a_1}{a_0} \\
     \end{array}
\right) \times
\left(
    \begin{array}{*{12}c}
     t_{\ell-d} \\
     \vdots\\
     t_{\ell-1}\\
     \end{array}
\right) =
\text{C}^T \times
\left(
    \begin{array}{*{12}c}
     t_{\ell-d} \\
     \vdots\\
     t_{\ell-1}\\
     \end{array}
\right) 
\]
By developing the above system
$\ell-d$ times, we obtain one coefficient of $T$
with the simple dot product:
\begin{equation}
t_\ell = [0,\ldots,0,1] \times \left(C^T\right)^{\ell-d} \times
[t_1,\ldots,t_d]^T
= [t_1,\ldots,t_d] \times C^{\ell-d} \times [0,\ldots,0,1]^T
\end{equation}
The idea of Fiduccia is then to use the Cayley-Hamilton theorem, $P(C) = 0$, and
identify polynomials with a vector representation  of their
coefficients, in order to obtain
$C^{\ell-d} \times [0,\ldots,0,1]^T = Z^{\ell-d} \times Z^{d-1} = 
Z^{\ell-1} \mod P(Z)$.
Now the modular exponentiation is obtained by binary recursive
squaring in $\log_2(\ell-1)$ steps, each one involving 1 or 2 multiplications
and 1 or 2 divisions of degree $d$, depending on the bit pattern of
$\ell-1$.
Thus, the complexity of the modular exponentiation is bounded by $\log_2(\ell)(8d^2)$ with an average
value of $\log_2(\ell)(6d^2)$, exactly the same constant factor as for
the high-order lifting when $\beta=\alpha$. The additional
operations are just a dot product of the obtained polynomial with
$[t_1,\ldots,t_d]$ and the initial direct Taylor recovery of the latter
coefficients, thus yielding the overall complexity for a single
coefficient of the series of $\log_2(\ell)(6d^2)+d^2$ arithmetic operations.

\subsubsection{Cluster of coefficients recovery}
In the more generic case of several clustered coefficients, $\ell \in
[\alpha,\beta]$, one needs to modify the algorithm, in order to avoid
computing $\beta-\alpha$ products
by $[0,\ldots,0,1,0,\ldots,0]^T$. 
Instead one will recover $d$ coefficients at a time, in
$\frac{\beta-\alpha}{d}$ steps.

First
the binary recursive powering algorithm is used to get
$\frac{\beta-\alpha}{d}$ expressions 
of $C^{\alpha+(j-1)d} = \sum_{i=0}^{d-1} c_i^{(j)} C^i$, at an average
global arithmetic cost of
$\left(\frac{\beta-\alpha}{d}\log_2(d)+\log_2(\alpha)\right)\left(6d^2\right)$.
Then for $v=[t_1,\ldots,t_d]^T$, the sequence
$v,Cv,C^2v,\ldots,C^{d-1}v$ is computed once, iteratively. 
Finally
this sequence is combined with the coefficients $c_i^{(j)}$ to obtain
the $\beta-\alpha$ coefficients at an overall cost of 
$\left(\frac{\beta-\alpha}{d}\log_2(d)+\log_2(\alpha)\right)\left(6d^2\right)+4d^2+\max\{d^2;d(2(\beta-\alpha)-d\}$.

\subsubsection{High-Order and Fiduccia algorithm comparison}
We compare in Table~\ref{tab:compfidu} the arithmetic
complexity bound of Storjohann's high-order lifting and the average
complexity bound for Fiduccia's algorithm. 
\begin{table}[htb]\center\small\vspace{-15pt}
\begin{tabular}{|c|c||c|c|}
\hline
Algorithm &  $\ell=\beta=\alpha$ & $\ell \in [\alpha,\beta]$\\
\hline
\multirow{2}{*}{High-Order}& \multirow{2}{*}{$6d^2\log_2(\frac{\beta}{2d})+4d^2$} &
$\left(6d^2+2(\beta-\alpha)^2\right)\log_2(\frac{\beta}{2d})$
\\
& & \hfill$+d(\beta-\alpha+2d)$\\
\multirow{2}{*}{Fiduccia} & \multirow{2}{*}{$6d^2\log_2(\beta)+d^2$} &
$\left(6d^2\right)\left(\frac{\beta-\alpha}{d}\log_2(d)+\log_2(\alpha)\right)$\\
 & & \hfill$+d\left(2(\beta-\alpha)+3d\right)$\\
\hline
\end{tabular}
\caption{Complexities, for $\beta>2d$, of Storjohann's high-order lifting and
  Fiduccia's algorithm, the latter on average.}
 \label{tab:compfidu}
\end{table}

From this table we see that Storjohann's algorithm should be preferred
when $\beta \neq \alpha$ and that Fiduccia's algorithm should be
preferred when both conditions $\beta=\alpha$ and ``$d$ is small'' are
satisfied. 
In practice, on the bivariate 
examples of Section~\ref{sec:appli}, with $\beta=\alpha$, 
the differences remained within
$20\%$ and were always dominated by the fraction reconstruction. 
Therefore, in the following, we use preferably Storjohann's high-order
lifting.

\subsection{Bivariate lifting}

We come back to the bivariate case of Equation~(\ref{eq:mgfG}). $B$, $A$
and all the $g_l$ are polynomials in $y$ (not fractions).  Therefore
one can compute the lifting on $z$ using arithmetic modulo $y^{\enn+1}$
for the coefficients.  Operations in this latter domain thus costs no
more than $O(\enn^2)$ operations over $\Q$.
In the following we use the formalism of the high-order lifting, but
the algorithm of Fiduccia can be used as a replacement if needed.

Finally, for faster computations,
one can also convert the rational coefficients
to their floating point approximations in order to get
Algorithm~\ref{alg:bivlif}.
\begin{algorithm}[H]
\caption{Bivariate floating point lifting.}\label{alg:bivlif}
\begin{algorithmic}[1]
\REQUIRE $\frac{B(y,z)}{A(y,z)} \in \Q[y](z)$.
\ENSURE A floating point $\left[ [BA^{-1}]_\alpha^\beta \right]_0^\enn$.
\STATE $B_f(y,z)$ and $A_f(y,z)$ are the conversion of $B$ and $A$ in
floating points. 
\STATE $(\Gamma_i) = ${\bf High-Order}$(A_f,\alpha,\beta)$ modulo $y^{\enn+1}$;
\STATE $(g_j(y))_{j=\alpha,\ldots,\beta}= ${\bf DevelChunk}$(A_f,B_f,(\Gamma_i),\alpha,\beta)$ modulo $y^{\enn+1}$;
\STATE Return $([g_j(y)]_0^\enn)_{j=\alpha,\ldots,\beta}$.
\end{algorithmic}
\end{algorithm}

Then floating point arithmetic modulo $y^{\enn+1}$, together with
Lemmata~\ref{lem:chunk} and~\ref{lem:hol},
yield the following complexity for the computation of chunks of the
Taylor development of a bivariate polynomial fraction:
\begin{proposition}Let $G(y,z)=\frac{B(y,z)}{A(y,z)}$ be a rational
  fraction with $B$ and $A$ polynomials of degrees less than $d$ with
  floating point coefficients.  Suppose now that $\beta >> d$, and
  that $\beta-\alpha = O(d)$.  Then the overall complexity to compute
  $\left[ [BA^{-1}]_\alpha^\beta \right]_0^\enn$ with
  Algorithm~\ref{alg:bivlif} and classical polynomial arithmetic is
  $$O(\log(\beta)d^2\enn^2)$$
  rational operations.
\end{proposition}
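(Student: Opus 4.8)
The plan is to reduce the bivariate estimate to the two univariate complexity bounds already established, namely Lemma~\ref{lem:hol} for \textbf{High-Order} and Lemma~\ref{lem:chunk} for \textbf{DevelChunk}, and then to pay a uniform overhead for carrying coefficients in $\Q[y]$ truncated at order $\enn$. First I would make explicit the single observation on which everything rests: Algorithm~\ref{alg:bivlif} runs exactly the univariate lifting pipeline on $A_f$ and $B_f$, but every scalar it manipulates is now a polynomial in $y$ reduced modulo $y^{\enn+1}$. With classical polynomial arithmetic a multiplication in $\Q[y]/(y^{\enn+1})$ costs $O(\enn^2)$ operations in $\Q$ and an addition costs $O(\enn)$; since the dominant operations counted in Lemmata~\ref{lem:hol} and~\ref{lem:chunk} are truncated polynomial multiplications in $z$ of degree $d$, each of them amounts to $O(d^2)$ multiplications in $\Q[y]/(y^{\enn+1})$, that is $O(d^2\enn^2)$ operations in $\Q$. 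So the total count is obtained by multiplying the ambient ring-operation counts of the two lemmas by the $O(\enn^2)$ per-operation factor.

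Next I would plug in the hypotheses $\beta\gg d$ and $\beta-\alpha=O(d)$ to collapse the two bounds. For \textbf{High-Order}, Lemma~\ref{lem:hol} gives $3\log_2\!\big(\tfrac{\beta+d}{2d}\big)d^2+\max\{4d^2;d(\beta-\alpha+2d)\}$ ring operations; under the hypotheses the logarithmic factor is $O(\log\beta)$ and the $\max$ term is $O(d^2)$, so the whole precomputation costs $O(\log(\beta)d^2)$ ring operations, the initial Taylor expansion $S=[A^{-1}]_0^\delta$ with $\delta=O(d)$ being only $O(d^2)$ and hence absorbed. For \textbf{DevelChunk}, Lemma~\ref{lem:chunk} gives $\log_2\!\big(\tfrac{\beta+d}{2d}\big)\big(3d^2+2(\beta-\alpha)^2\big)$ ring operations; with $(\beta-\alpha)^2=O(d^2)$ and the logarithm $O(\log\beta)$ this is again $O(\log(\beta)d^2)$ ring operations. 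Adding the two, together with the negligible $O(d\enn)$ cost of converting $B$ and $A$ to floating point, the algorithm performs $O(\log(\beta)d^2)$ operations in $\Q[y]/(y^{\enn+1})$, hence $O(\log(\beta)d^2\enn^2)$ operations in $\Q$, which is the claimed bound.

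The one point that needs genuine care — and which I expect to be the main obstacle in a fully rigorous write-up — is justifying that \emph{every} step of the univariate algorithms, once lifted to $\Q[y]/(y^{\enn+1})$, still costs at most $O(\enn^2)$ per scalar operation, uniformly. Two subtleties arise: first, the degree in $y$ of the intermediate quantities really does stay under control, because $\deg_y(\mathbf{P}+y\mathbf{Q})^j\le j$ forces $\deg_y A,\deg_y B\le d$ and $\deg_y g_j\le j$, so truncation at $y^{\enn+1}$ is the only source of $y$-degree growth and it is imposed by construction; second, the divisions occurring in the pipeline (inversion of $A$ modulo powers of $z$, and the normalisation by the constant term in the Fiduccia variant) require the relevant $y$-constant terms to be units in $\Q[y]/(y^{\enn+1})$, which holds here since $A(y,z)$ has a nonzero scalar term, so no extra case analysis or cost is incurred. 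Once these points are dispatched, the argument is the routine counting above: sum the two lemma bounds, apply the hypotheses $\beta\gg d$ and $\beta-\alpha=O(d)$ to reduce every term to $O(\log(\beta)d^2)$, and multiply through by the $O(\enn^2)$ cost of a classical multiplication modulo $y^{\enn+1}$.
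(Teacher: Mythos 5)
Your proposal is correct and follows exactly the route the paper intends: combine the operation counts of Lemma~\ref{lem:hol} and Lemma~\ref{lem:chunk}, use $\beta\gg d$ and $\beta-\alpha=O(d)$ to reduce both to $O(\log(\beta)d^2)$ ring operations, and multiply by the $O(\enn^2)$ cost of classical arithmetic in $\Q[y]/(y^{\enn+1})$. The extra care you take about $y$-degree control and invertibility of the constant term is a welcome refinement but does not change the argument.
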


This improves e.g. on the algorithm of \citet{Knuth:1997:TAoCPSA} used
in \citet[][Algorithm 8]{nicodeme}, which has
complexity $O(\log(\beta)d^3\enn^2)$.

\jgd{Note that, with fast floating point polynomial arithmetic (for
instance using FFT), our complexity would reduce to   
$$O(\log(\beta)d\log(d)\enn\log(\enn))=O((\enn d)^{1+o(1)}\log(\beta)).$$}

\subsection{Overall complexity}
Another improvement can be made on Algorithm~\ref{alg:rr} when the
desired degree $\enn$ in $y$ of the development is smaller than the
degree $d$ of the obtained bivariate fraction: compute the series and the
fraction reconstruction also modulo $y^{\enn+1}$. Recall that we
consider computing
$\mathbb{P}(N_\ell=n)$ where the transition matrix is of dimension
$R$, with $O(R)$ nonzero coefficients, and the rational fraction is
of degree $d\leq R$.
Therefore, the overall complexity of Algorithms~\ref{alg:rr}-\ref{alg:bivlif}, with fast
arithmetic, computing the latter, is bounded by:
\jgd{\begin{equation}
O\left(\min\{\enn,d\}d^2 R\log(R)+\log(\ell)d^{1+o(1)}\enn^{1+o(1)}\right)
\end{equation}
}

\section{Applications}\label{sec:appli}
All the transition matrices arising from the DFA of the considered
examples are available from the ``Sparse Integer Matrix
Collection''\footnote{\url{http://ljk.imag.fr/membres/Jean-Guillaume.Dumas/Matrices/DFA/}}.
\subsection{Toy-examples}
We consider here an independent and identically distributed sequence of
letters that are uniformly distributed over the four letter alphabet
$\mathcal{A}=\{{\tt A},{\tt B},{\tt C},{\tt D}\}$. Partial recursion
was performed with a floating point-arithmetic precision of
$\varepsilon=1/2^{1024} \simeq 10^{-710}$ (implementation using the 
${\tt mpf}$ class from the GMP\footnote{GNU Multi-Precision library \url{http://gmplib.org/}}), and relative error
$\eta=10^{-15}$. \gregtwo{Note that the high $1024 bit$ precision was necessary to avoid numerical stability issues in the numerical convergence of the partial recursion algorithm. These issues seem to be directly connected with the order of magnitude of $1-\lambda$ and the chosen precision ($1024 bits$) solved these issues for all the computation considered. It is nevertheless obvious that this phenomenon requires further investigation. See discussion.}

 The bivariate polynomial fraction reconstruction was
implemented using \linbox\footnote{\url{http://linalg.org}} and
\givaro\footnote{\url{http://ljk.imag.fr/CASYS/LOGICIELS/givaro}} and
the high-order lifting using \givaro~and
\mpfr\footnote{\url{http://www.mpfr.org}} with the {\sc
  mpreal}\footnote{\url{http://www.holoborodko.com/pavel/mpfr}}
C++ wrapper. 
\jgd{All running times are expressed in seconds and MT stands
for Memory Thrashing.}

\begin{table}
{\setlength{\arraycolsep}{4pt}
$$
  \begin{array}{|c|cc|lr|cr|}
    \hline
    \multicolumn{1}{|c|}{\text{POSIX regex}}       & \multicolumn{1}{c}{R} & \multicolumn{1}{c|}{F}  & \multicolumn{1}{c}{1-\lambda} & \multicolumn{1}{c|}{t_1} & \multicolumn{1}{c}{\text{frac. deg.}} & \multicolumn{1}{c|}{t_2} \\
    \hline
    {\tt AD(A|D)\{0\}AD} & 5 & 1 & 3.7 \times 10^{-3} & 0.03 & 2/4 & 0.00 \\
    {\tt AD(A|D)\{2\}AD} & 12 & 2 & 9.5 \times 10^{-4} & 0.11 & 6/8 & 0.01 \\
    {\tt AD(A|D)\{5\}AD} & 50 & 8 & 1.2 \times 10^{-4} & 0.49 & 28/30 & 0.12 \\
    {\tt AD(A|D)\{10\}AD} & 555 & 89 & 3.7 \times 10^{-5} & 6.14 & 321/323 & 3.18 \\
    {\tt AD(A|D)\{15\}AD} & 6155 & 987 & 1.2 \times 10^{-7} & 73.46 &
    3570/3572 & 17035.18 \\
   \hline
  \end{array}
$$
}
\caption{Toy-example motifs over the alphabet $\mathcal{A}=\{{\tt
    A},{\tt B},{\tt C},{\tt D}\}$. $R$ (resp. $F$) is the number of
  states (resp. final states) of the minimal order $0$ DFA associated
  to the regular expression. $\lambda$ is the largest eigenvalue of
  $\mathbf{P}$, and $t_1$ the time to compute $\lambda$ using the
  power method. ``frac. deg.'' corresponds to the fractional degrees
  of $G(y,z)$ and $t_2$ is the time to compute $G(y,z)$ using
  Algorithm~\ref{alg:rr}.}
 \label{tab:toy_motifs}
\end{table}

In Table~\ref{tab:toy_motifs} we consider 5 example motifs of
increasing complexities. For the partial recursion approach, the
eigenvalue $\lambda$ is reported along with the corresponding
computational time. 
\jgdeig{One should note that the high cost of this part is mainly
due to the 1024 bits precision floating point computation rather than
to the crudeness of the power approach (For computing the largest
eigenvalue, Lanczos iterations, the power approach, multigrid methods
etc. display similar performance on the considered examples: that is
less than 1 second for any of them in double precision. Our first
implementation in arbitrary precision used the power method, there 
of course being room for improvement).}

We also report in this
table the fractional degrees of $G(y,z)$ computed through the rational
reconstruction. We can see that the limiting factor
of the series computation is memory. For example, for Motif ${\tt
  AD(A|D)\{15\}AD}$, only storing the first $2d=3570+3572=7142$
bivariate terms over the rationals of the series would require the
order of $4d^3 R\log_{2^8}(R)\approx 1.7 \times 10^6$ Gigabytes,
using the estimates of Proposition~\ref{prop:fr}. Note that for this
motif, the degrees in $z$ of the numerator and denominator of the
fraction are only probabilistic since they were computed modulo a
random word-size prime number at a random evaluation in $y$.

\begin{table}\vspace{-10pt}
{\setlength{\arraycolsep}{2pt}
\begin{small}$$
\begin{array}{|c|rrrl|rr|rr|rr|}
\hline

& \multicolumn{1}{c}{n} & \multicolumn{1}{c}{\alpha} & \multicolumn{1}{c}{\ell} & \multicolumn{1}{c|}{\mathbb{P}(N_\ell=n)} & \multicolumn{1}{c}{\text{e.r.}} & \multicolumn{1}{c|}{t_0} & \multicolumn{1}{c}{t_3} & \multicolumn{1}{c|}{+t_1} & \multicolumn{1}{c}{t_4} & \multicolumn{1}{c|}{+t_2} \\
\hline
\multirow{6}{*}{\begin{sideways}{\tt AD(A|D)\{0\}AD}\end{sideways}} &  10 & 90 & 2,000 & 9.12559 \times 10^{-2} & 0.234 & 0.50 & 0.04 & 0.07 & 0.01 & 0.01 \\
                     &     &    & 20,000 & 4.37982 \times 10^{-21} &
                     0.168 & 5.00 & 0.04 & 0.07 & 0.01 & 0.01 \\
                     &     &    & 200,000 & 3.82435 \times 10^{-302} & 0.063 & 49.92 & 0.04 & 0.07 & 0.01 & 0.01 \\
                     &  100 & 666 & 2,000 & 9.06698 \times 10^{-59} & 0.025 & 4.47 & 2.53 & 2.56 & 0.01 & 0.01 \\
                     &      &     & 20,000 & 2.95125 \times 10^{-3} & 0.586 & 46.04 & 2.53 & 2.56 & 0.01 & 0.01 \\
                     &      &     & 200,000 & 1.07460 \times 10^{-196} & 1.495 & 461.61& 2.53 & 2.56 & 0.01 & 0.01 \\
\hline
\multirow{6}{*}{\begin{sideways}{\tt AD(A|D)\{2\}AD}\end{sideways}}  &  10 & 128 & 2,000 & 6.06131 \times 10^{-5} & 0.025 & 1.12 & 0.13 & 0.24 & 0.01 & 0.02 \\
                     &    &     & 20,000 & 8.13580 \times 10^{-3} & 0.114 & 11.38 & 0.13 & 0.24 & 0.01 & 0.02 \\
                     &    &     & 200,000 & 2.54950 \times 10^{-67} & 0.158 & 113.13 & 0.13 & 0.24 & 0.01 & 0.02 \\
                     &  100 & 971 & 2,000 & 4.58582 \times 10^{-94} & 0.027 & 10.44 & 8.97 & 9.08 & 0.01 & 0.02 \\
                     &      &     & 20,000 & 1.14066\times  10^{-34} & 0.260 &  107.05 & 8.97 & 9.08 & 0.01 & 0.03 \\
                     &      &     & 200,000 &  5.92396\times  10^{-14} &  0.232 &  1075.90 & 8.97 & 9.08 & 0.01 & 0.03 \\
\hline
\multirow{6}{*}{\begin{sideways}{\tt AD(A|D)\{5\}AD} \end{sideways}} & 2 & 158 & 2,000 & 2.59931 \times  10^{-2} & 0.031 &  1.23 &  0.07 & 0.56 & 0.00 & 0.13 \\
                     &   &     & 20,000 & 2.55206 \times  10^{-1} & 0.040 &  12.80 &  0.07 & 0.56 & 0.01 & 0.13 \\
                     &   &     & 200,000 & 1.35276 \times  10^{-8} & 0.041 &  124.76 & 0.07 & 0.56 & 0.01 & 0.13 \\
                     & 20 & 278 & 2,000 & 1.59351 \times  10^{-22} & 0.055 &  8.76 & 2.18 & 2.67 & 0.02 & 0.64 \\
                     &    &     & 20,000 & 3.79239 \times  10^{-11} & 0. 126 &  88.19 & 2.18 & 2.67 & 0.03 & 0.65 \\
                     &    &     & 200,000 & 5.79753\times  10^{-2} & 0.044 &  912.11 & 2.18 & 2.67 & 0.04 & 0.66 \\
\hline
\multirow{6}{*}{\begin{sideways}{\tt AD(A|D)\{10\}AD}\end{sideways}}  & 2 & 75 & 2,000 & 2.38948 \times  10^{-4} & 0.017 &  14.38 &  1.05 & 7.19 & 0.13 & 27.90 \\
                      &   &    & 20,000 & 4.4012 \times  10^{-3} & 0.093 &  148.49 &  1.05 & 7.19 & 0.32 & 28.07 \\
                      &   &    & 200,000 & 1.33166 \times  10^{-1} &
                      \text{NA} & \text{NA} &  1.05 & 7.19 & 0.48 &
                      28.12 \\
                      & 20 & 380 & 2,000 & 1.24717 \times  10^{-27} & 0.000 &  100.45 &  34.41 & 40.55 & 0.80 & 261.21 \\
                      &    &     & 20,000 &  1.25298 \times  10^{-25} & \text{NA} &  \text{NA} & 34.41 & 40.55 & 1.84 & 263.35 \\
                      &    &     & 200,000 &  6.25326 \times  10^{-18} & \text{NA} &  \text{NA} & 34.41 & 40.55 & 2.72 & 264.05 \\
\hline
\multirow{6}{*}{\begin{sideways}{\tt AD(A|D)\{15\}AD}\end{sideways}}  & 2 & 87 & 2,000 &  6.74582\times  10^{-6} & 0.001 &  153.54 &  12.95 & 86.41 & 0.16 & 17035.34 \\
                      &   &    & 20,000 &  7.02066\times  10^{-5} & \text{NA} &  \text{NA} &  12.95 & 86.41 & \text{-} & \text{-} \\
                      &   &    & 200,000 &  9.09232\times  10^{-4} & \text{NA} &  \text{NA} &  12.95 & 86.41 & \text{-} & \text{-} \\
                      & 20 & 491 & 2,000 &  5.72720\times  10^{-30} & \text{NA} &  \text{NA} &  477.05 & 550.51 & \text{-} & \text{-} \\
                      &    &     & 20,000 & 6.39056 \times  10^{-29} & \text{NA} &  \text{NA} &  477.05 & 550.51 & \text{-} & \text{-} \\
                      &    &     & 200,000 &  1.42666\times  10^{-27} & \text{NA} &  \text{NA} &  477.05 & 550.51 & \text{-} & \text{-} \\
\hline
\end{array}
$$\end{small}
}
\caption{$\mathbb{P}(N_\ell=n)$ for the toy-example motifs over the alphabet $\mathcal{A}=\{{\tt A},{\tt B},{\tt C},{\tt D}\}$ using a i.i.d. and uniformly distributed background model. $\alpha$ is the rank of the partial recursion (depends only on $n$), ``e.r.'' is the ratio of the relative error of the computation divided by the targeted relative error $\eta=10^{-15}$, $t_0$ is the running time to perform the computation using the full recursion, $t_3$ is the running time to perform the computation using the partial recursion (``$+t_1$'' gives the total running time $t_1+t_3$), $t_4$ is the running time to perform the computation using the high-order lifting (``$+t_2$'' give the total running time $t_2+t_4$).}\label{tab:toy_res}
\end{table}

In Table~\ref{tab:toy_res} we perform the computation of
$\mathbb{P}(N_\ell=n)$ for the considered motifs for various ranges
of values for $\ell$ and $n$. For validation purposes, the results of
the partial recursion are compared to those of the slower full
recursion. The relative error between the two approaches is compared
to expected relative error $\eta$: in all cases but one the resulting
error ratio (e.r.) is below 1.0 thus proving that both results are
quite consistent. In the remaining case, e.r. is only slightly larger
than 1.0 (1.495) which remains acceptable. In terms of computational
time however, the partial recursion approach is dramatically faster
than the full recursion. This is especially the case for the more
complex motifs for which full recursion was not even performed in some
cases. 

With the high-order lifting approach (Algorithms
\ref{alg:rr}-\ref{alg:bivlif}) we see that whenever the degree of the
bivariate fraction remains small, the overall performance is very
good. Moreover, one could compute the fraction once and then use the
very fast high-order lifting to recover any coefficient at a
negligible cost. Now when the degrees and the size of the involved
matrices grows, memory becomes the limiting factor, just to store the
series, prior to any computation on it.

In empirical complexity, the full recursion increases at the
expected $O(n \times \ell)$ rate. On the other hand, the partial
recursion running time is consistent with an $O(\alpha \times n)$ rate
with $\alpha$ increasing at a roughly linear rate with $n$.

\subsection{Transcription factors in Human Chromosome 10}

In this section, we consider the complete DNA ($\mathcal{A}=\{{\tt
  A},{\tt C},{\tt G},{\tt T}\}$) sequence
  
  of the Human Chromosome 10. 
In order to take into account the codon (DNA words of size 3)
structure of the sequence \greg{(which is known to play a key role in
  coding sequences)}, we adjust a homogeneous order 2 Markov model on
the data\footnote{\greg{Homogeneous order $m\geqslant 0$ Markov model
    MLE uses the observed frequencies of $(m+1)$-words. Taking into
    account the codons' ($3$-letter words) frequency hence requires to
    consider at least an order $m=2$ Markov model.}}. 
\gregtwo{The tri-nucleotide frequencies are given in
  Table~\ref{tab:tri}; sequence length is $\ell=131,624,728$ and the
  sequence starts with the two letters ${\tt  GA}$.} 
The maximum likelihood estimate (MLE) of the transition matrix of the model is
directly obtained from the observed counts of all DNA words of size
3. For example, since $N({\tt TAA})=2632852$, $N({\tt TAC})=1451956$,
$N({\tt TAG})=1655432$ and $N({\tt TAT})=2565811$, we get the MLE:

$$\widehat{\mathbb{P}}(X_i={\tt C} | X_{i-2}X_{i-1} = {\tt
  TA})=\frac{1451956}{2632852+1451956+1655432+2565811}.$$ One should
note that our Markov parameters are then all rationals.

\begin{table}[htb]
\setlength{\arraycolsep}{3pt}
\begin{footnotesize}\begin{multicols}{4}
\gregtwo{
$$
\begin{array}{|c|c|}
\hline
{\tt AAA}  &   4925908\\
{\tt AAC}  &   1894781\\
{\tt AAG}  &   2608606\\
{\tt AAT}  &   3178544\\
{\tt ACA}  &   2643624\\
{\tt ACC}  &   1556255\\
{\tt ACG}  &   346765 \\
{\tt ACT}  &   2095819\\
{\tt AGA}  &   2893502\\
{\tt AGC}  &   1890960\\
{\tt AGG}  &   2394790\\
{\tt AGT}  &   2097405\\
{\tt ATA}  &   2563310\\
{\tt ATC}  &   1743008\\
{\tt ATG}  &   2414219\\
{\tt ATT}  &   3176591\\
\hline
\end{array}
$$
}

\gregtwo{
$$
\begin{array}{|c|c|}
\hline
{\tt CAA}  &   2467088\\
{\tt CAC}  &   2029304\\
{\tt CAG}  &   2749113\\
{\tt CAT}  &   2412170\\
{\tt CCA}  &   2494477\\
{\tt CCC}  &   1800468\\
{\tt CCG}  &   378469 \\
{\tt CCT}  &   2400823\\
{\tt CGA}  &   303297 \\
{\tt CGC}  &   325579 \\
{\tt CGG}  &   377677 \\
{\tt CGT}  &   347404 \\
{\tt CTA}  &   1654612\\
{\tt CTC}  &   2267140\\
{\tt CTG}  &   2751827\\
{\tt CTT}  &   2613317\\
\hline
\end{array}
$$
}

\gregtwo{
$$
\begin{array}{|c|c|}
\hline
{\tt GAA}  &   2581990\\
{\tt GAC}  &   1266423\\
{\tt GAG}  &   2263506\\
{\tt GAT}  &   1740603\\
{\tt GCA}  &   1946916\\
{\tt GCC}  &   1632319\\
{\tt GCG}  &   323812 \\
{\tt GCT}  &   1891590\\
{\tt GGA}  &   2085526\\
{\tt GGC}  &   1630198\\
{\tt GGG}  &   1794047\\
{\tt GGT}  &   1554034\\
{\tt GTA}  &   1452165\\
{\tt GTC}  &   1267140\\
{\tt GTG}  &   2031739\\
{\tt GTT}  &   1900980\\
\hline
\end{array}
$$
}

\gregtwo{
$$
\begin{array}{|c|c|}
\hline
{\tt TAA}  &   2632852\\
{\tt TAC}  &   1451956\\
{\tt TAG}  &   1655432\\
{\tt TAT}  &   2565811\\
{\tt TCA}  &   2572660\\
{\tt TCC}  &   2085193\\
{\tt TCG}  &   304911 \\
{\tt TCT}  &   2898664\\
{\tt TGA}  &   2570179\\
{\tt TGC}  &   1947900\\
{\tt TGG}  &   2497293\\
{\tt TGT}  &   2653181\\
{\tt TTA}  &   2635963\\
{\tt TTC}  &   2584156\\
{\tt TTG}  &   2470768\\
{\tt TTT}  &   4937918\\
\hline
\end{array}
$$
}
\end{multicols}\end{footnotesize}
\caption{\gregtwo{Tri-nucleotide frequencies in the human chromosome 10 ($\ell=131\,624\,728$).}}\label{tab:tri}
\end{table}

\begin{table}[htb]
{\setlength{\arraycolsep}{3pt}
\begin{footnotesize}$$
  \begin{array}{|c|cc|lr|cr|}
    \hline
    \multicolumn{1}{|c|}{\text{Transcription Factor}}       & \multicolumn{1}{c}{R} & \multicolumn{1}{c|}{F}  & \multicolumn{1}{c}{1-\lambda} & \multicolumn{1}{c|}{t_1} & \multicolumn{1}{c}{\text{frac. deg.}} & \multicolumn{1}{c|}{t_2} \\
    \hline
    {\tt CGCACCC} & 21 & 1 &  1.04130 \times 10^{-5} & 0.13 & 18/19 & 3.24 \\
    {\tt TCCGTGGA} & 22 & 1 & 5.06531 \times 10^{-6} & 0.13 & 19/20 & 3.62 \\
    {\tt ACAACAAC} & 23 & 1 & 1.16022 \times 10^{-5} & 0.15 & 21/22 & 5.71 \\
    {\tt (A|C)TAAA(C|T)AA} & 25 & 2 & 1.41728 \times 10^{-4} & 0.18 & 20/20 & 4.36 \\
    {\tt (A|T)\{3\}TTTGCTC(A|G)} & 30 & 2 & 1.05501 \times 10^{-5} & 0.20 & 23/23 & 5.50 \\
    {\tt A\{24\}} &  38 & 1 & 6.11979 \times 10^{-11} & 0.25 & 36/37 & 3.78 \\
    {\tt TA(A|T)\{4\}TAG(A|C)} & 54 & 2  & 6.87736 \times 10^{-5} & 0.45 & 21/22 & 1.41 \\
    {\tt (C|T)CCN(C|T)TN(A|G)\{2\}CCGN} & 66 & 4  & 3.21470 \times 10^{-6} & 0.63 & 24/25 & 9.57 \\
    {\tt GCGCN\{6\}GCGC} & 228 & 8 & 3.49649 \times 10^{-7} & 3.84 & 54/55 & 66.52 \\
    {\tt CGGN\{8\}CGG} & 419 & 13 & 8.20997 \times 10^{-6} & 10.12 & 81/82 & 283.61\\
    {\tt TTGACAN\{17\}TATAAT} &  2068 & 34 & 1.29222 \times 10^{-7} & 34.91 & 173/173 & 6392.23 \\
    {\tt TTGACAN\{16,18\}ATATAAT} &  2904 & 55 & 1.19636 \times 10^{-7} & 49.18 & 253/253 & 23727.28\\
    {\tt GCGCN\{15\}GCGC} &  6158 & 225 & 3.49683 \times 10^{-7} & 202.48 & 1079/1080 & \text{MT} \\
    \hline
  \end{array}
$$\end{footnotesize}
}
\caption{Regular expression of Transcription Factors (TFs) defined
  over the DNA alphabet $\mathcal{A}=\{{\tt A},{\tt C},{\tt G},{\tt
    T}\}$  using the IUPAC notation ${\tt N}={\tt (A|C|G|T)}$. $R$ (resp. $F$) is the number of states (resp. final states) of the minimal order $2$ DFA associated to the TFs. $\lambda$ is the largest eigenvalue of $\mathbf{P}$, and $t_1$ the time to compute $\lambda$ using the power method. ``frac. deg.'' corresponds to the fractional degrees of $G(y,z)$, and $t_2$ is the time to compute $G(y,z)$ using Algorithm~\ref{alg:rr}.}
 \label{tab:TF_motifs}
\end{table}

In Table~\ref{tab:TF_motifs} we consider a selection of various
Transcription Factors (TFs) motifs. These TFs are typically involved
in the regulation of gene expression. The selected motifs range from
simple patterns (e.g. ${\tt CGCACCC}$) to highly complex ones (e.g.
${\tt GCGCN\{15\}GCGC}$). For each motif, the precomputations
necessary for the partial recursion (computation of $\lambda$) and the
high-order lifting approach (computation of $G(y,z)$) are
indicated. As in Table~\ref{tab:toy_motifs} we see that the running
time increases with the motif complexity, eventually resulting in a
memory thrashing (MT) for the computation of the rational
reconstruction. One should note that time $t_2$ is larger for these
motifs than for the toy examples of the previous section even when the
fractional degrees are similar. This is explained by the more complex
nature of the model parameters (e.g. $1451956/8306051$ for the TFs {\it
  vs} $1/4$ for the toy-example).

\begin{table}\vspace{-10pt}
{\setlength{\arraycolsep}{3pt}
\begin{footnotesize}$$
\begin{array}{|c|rrl|rr|rr|}
\hline
\multicolumn{1}{|c|}{\text{Transcription Factor}} & \multicolumn{1}{c}{n} & \multicolumn{1}{c}{\alpha} & \multicolumn{1}{c|}{\mathbb{P}(N_\ell=n)} & \multicolumn{1}{c}{t_3} & \multicolumn{1}{c|}{+t_1} & \multicolumn{1}{c}{t_4} & \multicolumn{1}{c|}{+t_2} \\
\hline
{\tt CGCACCC} & 10 & 117 & 3.64365 \times 10^{-571} & 0.19 & 0.32 & 0.41 & 3.65 \\ 
              & 20 & 204 & 1.27159 \times 10^{-551} & 0.60 & 0.73 & 1.05 & 4.32 \\
              & 40 & 373 & 2.07574 \times 10^{-518} & 2.10 & 2.23 & 3.17 & 6.44 \\
\hline
{\tt TCCGTGGA} & 10 & 131 & 1.33747 \times 10^{-268} & 0.20 & 0.33 & 0.01 & 3.63 \\ 
               & 20 & 225 & 3.46367 \times 10^{-252} & 0.63 & 0.76 & 0.03 & 3.65 \\ 
               & 40 & 409 & 3.11336 \times 10^{-225} & 2.17 & 2.30 & 0.05 & 3.70 \\
\hline
{\tt AACAACAAC} & 10 & 142 & 3.86490 \times 10^{-170} &  0.25 & 0.40 & 0.02 & 5.73 \\
                & 20 & 258 & 1.22856 \times 10^{-155} & 0.88 & 1.03 & 0.03 & 5.76 \\
                & 40 & 492 & 1.69964 \times 10^{-132} & 3.24 & 3.39 & 0.06 & 5.79 \\
\hline
{\tt (A|C)TAAA(C|T)AA} & 10 & 136 & 6.76399 \times 10^{-8067} & 0.26 & 0.44 & 0.53 & 4.89 \\
                       & 20 & 240 & 4.79070 \times 10^{-8036} & 0.87 &
                       1.05 & 1.35 & 5.71 \\
                       & 40 & 449 & 3.22178 \times 10^{-7980} & 3.14 & 3.32 & 4.07 & 8.44 \\
\hline
{\tt (A|T)\{3\}TTTGCTC(A|G)} & 10 & 150 & 6.03263 \times 10^{-579} &
0.30 & 0.50 & 0.63& 6.13 \\
                             & 20 & 267 & 2.40165 \times 10^{-559} & 0.99 & 1.19 & 1.59 & 7.12 \\
                             & 40 & 500 & 5.10153 \times 10^{-526} & 3.58 & 3.78 & 4.87 & 10.42 \\
\hline
{\tt A\{24\}} &  5 & 171 & 1.16314 \times 10^{-4} & 0.31 & 0.56 & 0.02 & 8.73 \\
              & 10 & 310 & 1.09217 \times 10^{-6} & 1.01 & 1.26 & 0.02 & 17.75 \\
              & 20 & 589 & 9.62071 \times 10^{-11} & 3.68 & 3.93 & 0.04 & 40.27 \\
\hline
{\tt TA(A|T)\{4\}TAG(A|C)} &  5 & 93 & 1.60427 \times 10^{-3914} & 0.21 & 0.66 & 0.12 & 3.20 \\
                           & 10 & 148 & 3.23597 \times 10^{-3899} & 0.54 & 0.99 & 0.27 & 5.49 \\
                           & 20 & 256 & 1.79579 \times 10^{-3871} & 1.69 & 2.14 & 0.76 & 7.40 \\
\hline
{\tt (C|T)CCN(C|T)TN(A|G)\{2\}CCGN} &  5 & 150 & 1.94195 \times 10^{-173} & 0.43 & 1.06 & 0.02 & 9.59 \\
                                    & 10 & 215 & 8.71218 \times 10^{-165} & 1.00 & 1.63 & 0.02 & 8.60 \\
                                    & 20 & 342 & 2.39167 \times 10^{-150} & 3.01 & 3.64 & 0.05 & 8.63 \\
\hline
{\tt GCGCN\{6\}GCGC} & 1 & 65 & 4.73516 \times 10^{-19} & 0.24 & 4.08 & 0.62 & 67.09 \\
                     & 2 & 92 & 1.08880 \times 10^{-17} & 0.50 & 4.34 & 0.87 & 98.62 \\
                     & 4 & 138 & 1.91912 \times 10^{-15} & 1.22 & 5.06 & 1.49 & 155.99 \\
\hline

{\tt CGGN\{8\}CGG} & 1 & 82 & 5.21188 \times 10^{-467} & 0.59 & 10.71 & 1.47 & 284.93 \\
                   & 2 & 114 & 2.80818 \times 10^{-464} & 1.17 & 11.29 & 1.79 & 403.75 \\
                   & 4 & 169 & 2.71751 \times 10^{-459} & 2.74 & 12.86 & 3.12 & 651.20 \\

\hline
{\tt TTGACAN\{17\}TATAAT} & 1 & 92 & 6.97988 \times 10^{-7} & 3.06 & 37.97 & 2.49 & 6394.73 \\
                          & 2 & 137& 5.93598 \times 10^{-6} & 6.72 & 41.63 & 3.78 & 9378.03\\
                          & 4 & 199& 1.43106 \times 10^{-4} &15.23 & 50.14 & 6.63 & 15008.84 \\
\hline
{\tt TTGACAN\{16,18\}ATATAAT} & 1 & 96 & 2.28201 \times 10^{-6} & 4.86 & 54.04 & 5.30 & 23732.58 \\
                              & 2 & 129& 1.79676 \times 10^{-5} & 9.38 & 58.56 & 7.42 & 34949.45 \\
                              & 4 & 202& 3.71288 \times 10^{-4} &23.16 & 72.34 & 15.65 & 56832.04 \\
\hline
{\tt GCGCN\{15\}GCGC} & 1 & 119 & 4.71467 \times 10^{-19} & 12.62 & 215.10 & \text{-} & \text{MT} \\
                    & 2 & 173 & 1.08420 \times 10^{-17} & 27.15 & 229.63 & \text{-} & \text{MT} \\
                    & 4 & 255 & 1.91136 \times 10^{-15} & 63.45 & 265.93 & \text{-} & \text{MT} \\
\hline
\end{array}
$$\end{footnotesize}
}\vspace{-10pt}
\caption{$\mathbb{P}(N_\ell=n)$, \greg{with $\ell=131\,624\,728$}, for the TFs motifs over the alphabet $\mathcal{A}=\{{\tt A},{\tt C},{\tt G},{\tt T}\}$ using an order 2 homogeneous Markov model. $\alpha$ is the rank of the partial recursion (depends only on $n$), $t_3$ is the running time to perform the computation using the partial recursion (``$+t_1$'' gives the total running time $t_1+t_3$), $t_4$ is the running time to perform the computation using the high-order lifting (``$+t_2$'' give the total running time $t_2+t_4$).}\label{tab:TF_res}
\end{table}

In Table~\ref{tab:TF_res}, we can see the computed values of
$\mathbb{P}(N_\ell=n)$ for our TFs motifs and for various values of
$n$. Due to the large value of $\ell$, the full recursion was no longer
tractable and there is hence no reference value for the
probability of interest. However, the results of both approaches are
always the same (up to the requested relative precision). For low
complexity TFs, the high-lifting is always much faster than the
partial recursion when considering only the core
computations. However, we get the opposite results when we consider as
well the pre-computation time (i.e.: obtaining $G(y,z)$ for the
high-order lifting, or computing $\lambda$ for the partial
recursion). As for the toy-examples, we see that the high-order
lifting approach cannot cope with high complexity patterns since the
fractional reconstruction is not feasible for them.

\subsection{Protein signatures}

We consider here the complete human proteome build as the concatenation
of all human protein sequences over the 20 aminoacid alphabet (from
the Uniprot database\footnote{\url{http://www.uniprot.org}}) resulting in a
unique sequence of length $\ell=9,884,385$. We fit an order 0 Markov
model onto these data from the observed counts of all aminoacids:
{\setlength{\arraycolsep}{4pt}
\begin{footnotesize}$$
\begin{array}{lllll}
  N({\tt A})=691113, & N({\tt R})=555875, & N({\tt N})=357955, & N({\tt D})=472303, & N({\tt C})=227722, \\
  N({\tt E})=698841, & N({\tt Q})=469260, & N({\tt G})=649800, & N({\tt H})=258779, & N({\tt I})=432849, \\
  N({\tt L})=981769, & N({\tt K})=567289, & N({\tt M})=212203, & N({\tt F})=363883, & N({\tt P})=617242, \\
  N({\tt S})=816977, & N({\tt T})=529157, & N({\tt W})=119979, & N({\tt Y})=267663, & N({\tt V})=593726. \\
\end{array}
$$\end{footnotesize}
}
As a consequence, our MLE parameters are expressed as
rationals. For example: $\widehat{\mathbb P}(X_i={\tt
  W})=119979/9884385$.

\begin{table}\small
{\setlength{\arraycolsep}{4pt}
$$
  \begin{array}{|l|c|r|r|lr|cr|}
    \hline
    \multicolumn{1}{|c|}{\text{PROSITE signature}}      & \multicolumn{1}{c|}{\text{AC}} & \multicolumn{1}{c|}{R} & \multicolumn{1}{c|}{F} & \multicolumn{1}{c}{1-\lambda} & \multicolumn{1}{c|}{t_1} & \multicolumn{1}{c}{\text{degrees}} & \multicolumn{1}{c|}{t_2} \\
    \hline
    {\tt PILI\_CHAPERONE} & {\tt PS00635}& 46 & 1 & 1.7 \times 10^{-10} & 0.63 & 15/18 & 0.27 \\
    {\tt EFACTOR\_GTP}     & {\tt PS00301} & 52 & 4 & 1.0 \times 10^{-8} & 0.74 & 14/16 & 0.18 \\
    {\tt ALDEHYDE\_DEHYDR\_CYS} & {\tt PS00070} & 67 & 17 & 1.1 \times 10^{-6} & 0.91 & 11/12 & 0.21\\
    {\tt SIGMA54\_INTERACT\_2} & {\tt PS00676} & 85 & 1 & 8.8 \times 10^{-10} & 1.08 & 0/16 & 0.22\\
    {\tt ADH\_ZINC}& {\tt PS00059} & 87 & 8 &  2.2 \times 10^{-7} & 1.40 & 37/41 & 2.07 \\
    {\tt SUGAR\_TRANSPORT\_1}& {\tt PS00216} & 188 & 54 & 6.7 \times 10^{-7} & 3.48 & 17/18 & 1.05\\
    {\tt THIOLASE\_1}& {\tt PS00098}& 254 & 6 & 2.6 \times 10^{-15} & 5.21 &  37/38 & 1.76\\
    {\tt FGGY\_KINASES\_2}& {\tt PS00445}& 463 & 6 & 2.2 \times 10^{-7} & 11.52 & 26/30 & 2.39 \\
    {\tt PTS\_EIIA\_TYPE\_2\_HIS}&{\tt PS00372}& 756 & 46 & 1.3 \times 10^{-9} & 17.47 & 77/80 & 18.60\\
    {\tt SUGAR\_TRANSPORT\_2}&{\tt PS00217}& 1152 & 40 & 8.6 \times 10^{-7} & 36.95 & 149/151 & 68.36 \\
    \hline
  \end{array}
$$
}
\caption{Characteristics of some PROSITE signatures defined over the aminoacid alphabet. AC is the accession number in the PROSITE database, $R$ (resp. $F$) is the number of states (resp. final states) of the minimal order $2$ DFA associated to the signatures. $\lambda$ is the largest eigenvalue of $\mathbf{P}$, and $t_1$ the time to compute $\lambda$ using the power method. ``frac. deg.'' corresponds to the fractional degrees of $G(y,z)$, and $t_2$ is the time to compute $G(y,z)$ using Algorithm~\ref{alg:rr}.}
 \label{tab:PROSITE_motifs}
\end{table}

\begin{table}\small
{\setlength{\arraycolsep}{4pt}
$$
\begin{array}{|l|rrl|rr|rr|}
\hline
\multicolumn{1}{|c|}{\text{PROSITE signature}} & \multicolumn{1}{c}{n} & \multicolumn{1}{c}{\alpha} & \multicolumn{1}{c|}{\mathbb{P}(N_\ell=n)} & \multicolumn{1}{c}{t_3} & \multicolumn{1}{c|}{+t_1} & \multicolumn{1}{c}{t_4} & \multicolumn{1}{c|}{+t_2} \\
\hline
{\tt PILI\_CHAPERONE} &  5 & 125 & 1.20635 \times 10^{-16} & 0.98 & 1.16 &0.01  & 0.17 \\
                     & 10 & 221 & 5.78491 \times 10^{-35} & 3.17 & 3.80 & 0.01  & 0.28 \\
                     & 20 & 413 & 1.81452 \times 10^{-74} & 11.20 & 11.83 & 0.01  & 0.69\\
\hline
{\tt EFACTOR\_GTP} & 5 & 114 & 7.25588 \times 10^{-8} &  1.07 & 0.84 & 0.01 & 0.18\\
                   & 10 & 196 & 2.30705 \times 10^{-17} &  3.41 & 0.95 & 0.01&0.32\\
                   & 20 & 364 & 3.18090 \times 10^{-39}  & 12.23 & 1.28 & 0.01 &0.75\\
\hline
{\tt ALDEHYDE\_DEHYDR\_CYS} & 5 & 88 & 1.85592 \times 10^{-2} & 1.13 & 2.04 & 0.01  & 0.20\\
                            & 10 & 151 & 1.15181 \times 10^{-1} & 3.59 & 4.50 & 0.01  & 0.30\\
                            & 20 & 270 & 6.05053 \times 10^{-3} &  12.27 & 13.18 & 0.01 & 0.60\\
\hline
{\tt SIGMA54\_INTERACT\_2} & 5 & 106 & 4.09432 \times 10^{-13} &  1.45& 2.53& 0.01 &0.23\\
                           & 10 & 189 & 6.70971 \times 10^{-28} & 4.68 & 5.76& 0.01&0.29\\
                           & 20 & 350 & 2.45724 \times 10^{-60} & 16.17& 17.25& 0.01 &0.40\\
\hline
{\tt ADH\_ZINC} &  5 & 116 & 4.51132 \times 10^{-2} & 1.91& 3.31& 0.01  & 2.08\\
                & 10 & 196 & 6.99469 \times 10^{-5} & 5.98& 7.38& 0.02  & 3.80\\
                & 20 & 352 & 2.29397 \times 10^{-13}& 20.52& 21.92& 0.04  & 7.57\\
\hline
{\tt SUGAR\_TRANSPORT\_1} & 1 & 60 & 6.06925 \times 10^{-3} & 0.65 & 4.13 & 0.05 & 1.10\\
                          & 2 & 75 & 1.64759 \times 10^{-2} & 1.29 & 4.77 & 0.08 & 1.51\\
                          & 4 & 110 & 5.41084 \times 10^{-2} & 3.30 & 6.78 & 0.12  & 2.29\\
\hline
{\tt THIOLASE\_1} & 1 & 85 & 2.54343 \times 10^{-8} & 1.23 & 6.44 & 0.01 & 1.77\\
                  & 2 & 127 & 1.61364 \times 10^{-15} & 2.85 & 8.06 &
                  0.01  & 2.04 \\
                  & 4 & 207 & 6.25151 \times 10^{-30} & 7.96 & 13.17 & 0.01  & 2.38\\
\hline
{\tt FGGY\_KINASES\_2} & 1 & 73 & 2.43018 \times 10^{-1} & 2.08 & 13.60 & 0.01  & 2.40\\
                       & 2 & 97 & 2.68005 \times 10^{-1} & 4.21 & 15.73 & 0.01  & 3.32\\
                       & 4 & 142 & 1.08650 \times 10^{-1} & 10.27 & 19.48 & 0.01  & 4.54\\
\hline
{\tt PTS\_EIIA\_TYPE\_2\_HIS} & 1 & 76 & 1.23843 \times 10^{-2} & 3.41 & 20.88 & 0.41  & 19.02\\
                              & 2 & 105 & 7.76535 \times 10^{-5} &
                              7.32 & 24.79 & 0.61 & 27.96 \\
                              & 4 & 163 & 1.0177 \times 10^{-9} & 19.18 & 36.65 & 1.08 & 44.72\\
\hline
{\tt SUGAR\_TRANSPORT\_2} & 1 & 96 & 1.71124 \times 10^{-3} & 6.78 & 43.73 & 1.37  & 69.73\\
                          & 2 & 124 & 7.28305 \times 10^{-3} & 13.51 & 50.46& 2.01  & 103.56\\
                          & 4 & 177 & 4.39742 \times 10^{-2} & 32.81 &
                          69.76 & 3.57  & 172.62 \\
\hline
\end{array}
$$
}
\caption{$\mathbb{P}(N_\ell=n)$, \greg{with $\ell=9\,884\,385$}, for the PROSITE signatures over the aminoacid alphabet an order 0 homogeneous Markov model. $\alpha$ is the rank of the partial recursion (depends only on $n$), $t_3$ is the running time to perform the computation using the partial recursion (``$+t_1$'' gives the total running time $t_1+t_3$), $t_4$ is the running time to perform the computation using the high-order lifting (``$+t_2$'' give the total running time $t_2+t_4$).}\label{tab:PROSITE_res}
\end{table}

We also consider a selection of 10
PROSITE\footnote{\url{http://expasy.org/prosite/}} signatures which
correspond to known functional motifs in proteins. In
Table~\ref{tab:PROSITE_motifs}, the complexity of the considered
motifs are studied along with the computational time to obtain
$\lambda$ (time $t_1$) or to obtain $G(y,z)$ (time $t_2$). Motifs are sorted by
increasing complexities, from Signature ${\tt PILI\_CHAPERONE}$ (whose
minimal DFA has $R=46$ states including $F=1$ final state) to
Signature ${\tt SUGAR\_TRANSPORT\_2}$ (whose minimal DFA has $R=1152$
states including $F=40$ final states). For both approaches, the
running time for the precomputations is similar but, as for
previous applications, we observe a steeper increase for the
fractional reconstruction when considering high complexity motifs.

In Table~\ref{tab:PROSITE_res} we compute $\mathbb{P}(N_\ell=n)$ for
all considered PROSITE signatures and a range of values for $n$. For
each combination, both the partial recursion and the high-order
lifting are performed and the two methods agree perfectly in their
results. As for the TFs, the fast Taylor expansion (time $t_4$) is
much faster than the recursion part (time $t_3$) but the
precomputation of $G(y,z)$ (time $t_4$) has a high cost, especially
for the signatures of high complexity, which is consistent with
previous observations.

\section{Conclusion}\label{sec:conclusion}

We have developed two efficient approaches to obtain the exact distribution
of a pattern in a long sequence. 
\jgd{Table~\ref{tab:comp} recalls the
different obtained complexities.}

\begin{table}[htbp]\center\footnotesize \ADDITIONCOLOR
\begin{tabular}{|l|l||l|l|}
\hline
& Approach  & Memory & Time\\
\hline
\multirow{6}{*}{\begin{sideways}{\hspace{10pt}Approxim.}\end{sideways}}&&&\\[-7pt]
&Full recursion& $O\left(n R\right)$ & $O\left(n |\mathcal{A}| R \ell \right)$\\[5pt]&&&\\[-10pt]
&\cite{ribeca} & $O\left(nR^2\log(\ell)\right)$&
$O\left(n^2R^3\log(\ell)\right)$\\[5pt]&&&\\[-10pt]
&Partial recursion  & $O\left(n R\right)$ & 
$O\left( n^4 |\mathcal{A}| R\log(\ell)/\log(\nu^{-1})\right)$\\[2pt]
\hline 
\multirow{4}{*}{\begin{sideways}{\hspace{5pt}Exact}\end{sideways}}&&&\\[-7pt]
&\cite{nicodeme} & $O\left(n^3R^4\log(R)\right)$ &
$O\left(R^6\log(R)+n^5\log(\ell)\right)$ \\[5pt]&&&\\[-10pt]
&High-order & $O\left(nd^2R\log(R)\right)$ &
$O\left(nd^2R\log(R)+(nd)^{1+o(1)}\log(\ell)\right)$ \\[2pt]
\hline
\end{tabular}
\caption{Complexities of the different approaches to compute
  $\mathbb{P}(N_\ell = n)$. $R$ is the size of the automaton,
  $|\mathcal{A}|$ is the size of the alphabet, $d<R$ is the degree
  of the rational fraction and $0<\nu<1$ is the magnitude of the second
  largest eigenvalue of $\widetilde{\mathbf{P}}$.}\label{tab:comp}
\end{table}

The first approach uses a partial recursion and is suitable even
for high complexity patterns. Unfortunately, its quadratic complexity
in the observed number of occurrence $n$ makes it not recommended for
large values of~$n$.

The second approach has two steps: first obtaining $G(y,z)$ through
fraction reconstruction of the series $\sum G_\ell(y) z^\ell$ and then
performing fast Taylor expansion using high-order lifting. On the one
hand, in all examples, just computing the series appears to be the
bottleneck, especially for high complexity patterns. On the other
hand, the fast Taylor expansion is usually very fast, even if the
running time increases with the fractional degrees. 
\jgd{Moreover, once the
generating function has been obtained, the fast liftings can reveal the
distribution of {\em any} pattern count at a very low cost.}

Future work will include improvement of the precomputation of
$G(y,z)$, for instance reconstructing the rational fraction from
approximated evaluations of the series. 
\jgd{However, an exact reconstruction on approximate values could yield a
reasonable model, but only with a generic behavior. That is, $d$, the
obtained degree, would in general be equal to $R$, the size of the
input matrices. On the contrary, in the examples, this degree is much
lower in practice.}
\greg{One should also note that the distribution of the number of
  motif occurrences is known to be very sensitive to the parameters
  (the transition probabilities of the Markov chain) and that any
  approximation performed on these parameter might have large and
  uncontrolled consequences \citep{nuel2006pattern}.}

\jgd{Another solution could be to use regularization methods or the
  approximate gcd-like approaches of e.g. \cite{Kaltofen:2007:snc} for
  the pre-computation of $G(y,z)$. This could yield significant
  improvements both in terms of memory and computational time if the
  small degrees were preserved.}

\gregtwo{Concerning the partial recursion, it is clear that the need of high precision floating point computations to avoid numerical convergence instability is a major issue and that a careful investigation of the overall stability of the algorithm in floating point arithmetic is a top priority for further investigations.}

Overall, the high-order lifting approach is very efficient for low or
median complexity motifs, but cannot deal efficiently with the highly
complex motifs.  In our examples, we dealt with two real applications
(TFs in Human Chromosome 10, and PROSITE signatures in the Human complete
proteome) which demonstrate the practical interest of our approaches.
Finally, dealing with fully exact computations for frequent (large
$n$) and high complexity (large $R$) motifs yet remains an open
problem. \greg{At the present time, for such challenging problems, it
  is likely that one can only rely on approximations like the Edgeworth
  expansions or the Bahadur-Rao approximations \citep[see][for more
  details]{nuel2011pattern}.}

\bibliographystyle{plainnat}   
\bibliography{biblio}   

\begin{thebibliography}{49}
\providecommand{\natexlab}[1]{#1}
\providecommand{\url}[1]{\texttt{#1}}
\expandafter\ifx\csname urlstyle\endcsname\relax
  \providecommand{\doi}[1]{doi: #1}\else
  \providecommand{\doi}{doi: \begingroup \urlstyle{rm}\Url}\fi

\bibitem[Aho and Corasick(1975)]{aho1975efficient}
A.V. Aho and M.J. Corasick.
\newblock Efficient string matching: an aid to bibliographic search.
\newblock \emph{Communications of the ACM}, 18\penalty0 (6):\penalty0 333--340,
  1975.

\bibitem[Allauzen and Mohri(2006)]{allauzen2006unified}
C.~Allauzen and M.~Mohri.
\newblock A unified construction of the glushkov, follow, and antimirov
  automata.
\newblock \emph{Mathematical Foundations of Computer Science 2006}, pages
  110--121, 2006.

\bibitem[Antzoulakos(2001)]{Ant01}
D.~L. Antzoulakos.
\newblock Waiting times for patterns in a sequence of multistate trials.
\newblock \emph{J. Appl. Prob.}, 38:\penalty0 508--518, 2001.

\bibitem[Beaudoing et~al.(2000)Beaudoing, Freier, Wyatt, Claverie, and
  Gautheret]{BFWCG00}
E.~Beaudoing, S.~Freier, J.R. Wyatt, J.-M. Claverie, and D.~Gautheret.
\newblock Patterns of variant polyadenylation signal usage in human genes.
\newblock \emph{Genome Res.}, 10\penalty0 (7):\penalty0 1001--1010, 2000.

\bibitem[Boeva et~al.(2005)Boeva, Cl\'ement, R\'egnier, and
  Vandenbogaert]{boeva}
V.~Boeva, J.~Cl\'ement, M.~R\'egnier, and M.~Vandenbogaert.
\newblock Assessing the significance of sets of words.
\newblock In \emph{Combinatorial Pattern Matching 05, Lecture Notes in Computer
  Science, vol. 3537}, pages 358--370. Springer-Verlag, 2005.

\bibitem[Boeva et~al.(2007)Boeva, Clement, Regnier, Roytberg, and
  Makeev]{boeva2}
V.~Boeva, J.~Clement, M.~Regnier, M.A. Roytberg, and V.J. Makeev.
\newblock Exact p-value calculation for heterotypic clusters of regulatory
  motifs and its application in computational annotation of cis-regulatory
  modules.
\newblock \emph{Algorithms for Molecular Biology}, 2:\penalty0 13, 2007.

\bibitem[Brazma et~al.(1998)Brazma, Jonassen, Vilo, and Ukkonen]{BJVU98}
A.~Brazma, I.~Jonassen, J.~Vilo, and E~Ukkonen.
\newblock Predicting gene regulatory elements in silico on a genomic scale.
\newblock \emph{Genome Res.}, 8\penalty0 (11):\penalty0 1202--1215, 1998.

\bibitem[Chang(2005)]{Cha05}
Y.-M. Chang.
\newblock Distribution of waiting time until the rth occurrence of a compound
  pattern.
\newblock \emph{Statistics and Probability Letters}, 75\penalty0 (1):\penalty0
  29--38, 2005.

\bibitem[Cormen et~al.(1990)Cormen, Leiserson, and Rivest]{CLR90}
T.~H. Cormen, C.~E. Leiserson, and R.~L. Rivest.
\newblock \emph{Introduction to Algorithms}, chapter~34, pages 853--885.
\newblock MIT Press, 1990.

\bibitem[Cowan(1991)]{Cow91}
Cowan.
\newblock Expected frequencies of {DNA} patterns using {Whittle}'s formula.
\newblock \emph{J. Appl. Prob.}, 28:\penalty0 886--892, 1991.

\bibitem[Crochemore and Hancart(1997)]{CH97}
M.~Crochemore and C.~Hancart.
\newblock \emph{{Handbook of Formal Languages, Volume 2, Linear Modeling:
  Background and Application}}, chapter {Automata for Matching Patterns}, pages
  399--462.
\newblock Springer-Verlag, Berlin, 1997.

\bibitem[Crochemore and Stefanov(2003)]{stefanov}
M.~Crochemore and V.~Stefanov.
\newblock Waiting time and complexity for matching patterns with automata.
\newblock \emph{Info. Proc. Letters}, 87\penalty0 (3):\penalty0 119--125, 2003.

\bibitem[Denise et~al.(2001)Denise, R\'egnier, and Vandenbogaert]{DRV01}
A.~Denise, M.~R\'egnier, and M.~Vandenbogaert.
\newblock Assessing the statistical significance of overrepresented
  oligonucleotides.
\newblock \emph{Lecture Notes in Computer Science}, 2149:\penalty0 85--97,
  2001.

\bibitem[El~Karoui et~al.(1999)El~Karoui, Biaudet, Schbath, and Gruss]{KBSG99}
M.~El~Karoui, V.~Biaudet, S.~Schbath, and A.~Gruss.
\newblock Characteristics of chi distribution on different bacterial genomes.
\newblock \emph{Res. Microbiol.}, 150:\penalty0 579--587, 1999.

\bibitem[Erhardsson(2000)]{Erh00}
T.~Erhardsson.
\newblock {Compound {Poisson} approximation for counts of rare patterns in
  {Markov} chains and extreme sojourns in birth-death chains}.
\newblock \emph{Ann. Appl. Probab.}, 10\penalty0 (2):\penalty0 573--591, 2000.

\bibitem[Fiduccia(1985)]{Fiduccia:1985:EFLR}
Charles~M. Fiduccia.
\newblock An efficient formula for linear recurrences.
\newblock \emph{SIAM Journal on Computing}, 14\penalty0 (1):\penalty0 106--112,
  February 1985.

\bibitem[Frith et~al.(2002)Frith, Spouge, Hansen, and Weng]{Spouge02}
M.~C. Frith, J.~L. Spouge, U.~Hansen, and Z.~Weng.
\newblock Statistical significance of clusters of motifs represented by
  position specific scoring matrices in nucleotide sequences.
\newblock \emph{Nucl. Acids. Res.}, 30\penalty0 (14):\penalty0 3214--3224,
  2002.

\bibitem[Fu(1996)]{Fu96}
J.~C. Fu.
\newblock Distribution theory of runs and patterns associated with a sequence
  of multi-state trials.
\newblock \emph{Statistica Sinica}, 6\penalty0 (4):\penalty0 957--974, 1996.

\bibitem[Geske et~al.(1995)Geske, Godbole, Schaffner, Skrolnick, and
  Wallstrom]{GGSS95}
M.~X. Geske, A.~P. Godbole, A.~A. Schaffner, A.~M Skrolnick, and G.~L.
  Wallstrom.
\newblock Compound {Poisson} approximations for word patterns under markovian
  hypotheses.
\newblock \emph{J. Appl. Probab.}, 32:\penalty0 877--892, 1995.

\bibitem[Godbole(1991)]{God91}
A.~P. Godbole.
\newblock Poissons approximations for runs and patterns of rare events.
\newblock \emph{Adv. Appl. Prob.}, 23, 1991.

\bibitem[Hampson et~al.(2002)Hampson, Kibler, and Baldi]{HKB02}
S.~Hampson, D.~Kibler, and P.~Baldi.
\newblock {Distribution patterns of over-represented k-mers in non-coding yeast
  DNA}.
\newblock \emph{Bioinformatics}, 18\penalty0 (4):\penalty0 513--528, 2002.

\bibitem[Hopcroft(1971)]{hopcroft1971n}
J.~Hopcroft.
\newblock An n log n algorithm for minimizing states in a finite automaton.
\newblock \emph{Reproduction}, pages 189--196, 1971.

\bibitem[Hopcroft et~al.(2001)Hopcroft, Motwani, and Ullman]{HMU01}
J.~E. Hopcroft, R.~Motwani, and J.~D. Ullman.
\newblock \emph{Introduction to the automata theory, languages, and
  computation, 2d ed.}
\newblock ACM Press, New York, 2001.

\bibitem[Kaltofen and Yang(2007)]{Kaltofen:2007:snc}
Erich Kaltofen and Zhengfeng Yang.
\newblock On exact and approximate interpolation of sparse rational functions.
\newblock In Christopher~W. Brown, editor, \emph{Proceedings of the 2007 ACM
  International Symposium on Symbolic and Algebraic Computation, Waterloo,
  Canada}, pages 203--210. ACM Press, New York, July 29 -- August 1 2007.

\bibitem[Karlin et~al.(1992)Karlin, Burge, and Campbell]{KBC92}
S.~Karlin, C.~Burge, and A.M. Campbell.
\newblock {Statistical analyses of counts and distributions of restriction
  sites in DNA sequences}.
\newblock \emph{Nucl. Acids. Res.}, 20\penalty0 (6):\penalty0 1363--1370, 1992.

\bibitem[Kleffe and Borodovski(1997)]{KlB97}
J.~Kleffe and M.~Borodovski.
\newblock First and second moments of counts of words in random texts generated
  by {Markov} chains.
\newblock \emph{Bioinformatics}, 8\penalty0 (5):\penalty0 433--441, 1997.

\bibitem[Knuth(1997)]{Knuth:1997:TAoCPSA}
Donald~E. Knuth.
\newblock \emph{Seminumerical Algorithms}, volume~2 of \emph{The Art of
  Computer Programming}.
\newblock Ad{\-d}i{\-s}on-Wes{\-l}ey, Reading, MA, USA, $2^{nd}$ edition, 1997.
\newblock ISBN 0-201-89684-2.

\bibitem[Le~Maout(2011)]{le2011regular}
V.~Le~Maout.
\newblock Regular expressions at their best: a case for rational design.
\newblock \emph{Implementation and Application of Automata}, pages 310--320,
  2011.

\bibitem[Leonardo Mariño-Ramírez and Landsman(2004)]{Spouge04}
Gavin C.~Kanga Leonardo Mariño-Ramírez, John L.~Spouge and David Landsman.
\newblock Statistical analysis of over-represented words in human promoter
  sequences.
\newblock \emph{Nuc. Acids Res.}, 32\penalty0 (3):\penalty0 949--958, 2004.

\bibitem[Lladser(2007)]{lladser}
M.~E. Lladser.
\newblock Mininal {Markov} chain embeddings of pattern problems.
\newblock In \emph{Information Theory and Applications Workshop}, pages
  251--255. IEEE, 2007.
\newblock \url{http://ita.ucsd.edu/workshop/07/files/paper/paper_505.pdf}.

\bibitem[Lothaire(2005)]{Lot05}
M.~Lothaire, editor.
\newblock \emph{{Applied Combinatorics on Words}}.
\newblock Cambridge University Press, Cambridge, 2005.

\bibitem[Nicod{\`e}me et~al.(2002)Nicod{\`e}me, Salvy, and Flajolet]{nicodeme}
Pierre Nicod{\`e}me, Bruno Salvy, and Philippe Flajolet.
\newblock Motif statistics.
\newblock \emph{Theoretical Computer Science}, 287\penalty0 (2):\penalty0
  593--617, September 2002.
\newblock ISSN 0304-3975.

\bibitem[Nuel(2004)]{Nue04}
G.~Nuel.
\newblock Ld-spatt: Large deviations statistics for patterns on {Markov}
  chains.
\newblock \emph{J. Comp. Biol.}, 11\penalty0 (6):\penalty0 1023--1033, 2004.

\bibitem[Nuel(2006{\natexlab{a}})]{Nue06}
G.~Nuel.
\newblock {Effective p-value computations using Finite {Markov} Chain Imbedding
  (FMCI): application to local score and to pattern statistics}.
\newblock \emph{Algorithms for Molecular Biology}, 1\penalty0 (1):\penalty0 5,
  2006{\natexlab{a}}.

\bibitem[Nuel(2006{\natexlab{b}})]{nuel}
G.~Nuel.
\newblock Numerical solutions for patterns statistics on {Markov} chains.
\newblock \emph{Stat. App. in Genet. and Mol. Biol.}, 5\penalty0 (1):\penalty0
  26, 2006{\natexlab{b}}.

\bibitem[Nuel(2006{\natexlab{c}})]{nuel2006pattern}
G.~Nuel.
\newblock Pattern statistics on markov chains and sensitivity to parameter
  estimation.
\newblock \emph{Algorithms for Molecular Biology}, 1\penalty0 (1):\penalty0 17,
  2006{\natexlab{c}}.

\bibitem[Nuel(2008)]{Nue08}
G.~Nuel.
\newblock {Pattern {Markov} chains: optimal {Markov} chain embedding through
  deterministic finite automata}.
\newblock \emph{J. of Applied Prob.}, 45\penalty0 (1):\penalty0 226--243, 2008.

\bibitem[Nuel(2011)]{nuel2011pattern}
G.~Nuel.
\newblock \emph{Bioinformatics: Trends and Methodologies}, chapter Significance
  Score of Motifs in Biological Sequences.
\newblock Intech, 2011.
\newblock ISBN: 978-953-307-282-1. Available at
  \url{http://www.intechopen.com/books/bioinformatics-trends-and-methodologies/significance-score-of-motifs-in-biological-sequences}.

\bibitem[Nuel et~al.(2010)Nuel, Regad, Martin, and Camproux]{gregory2010exact}
G.~Nuel, L.~Regad, J.~Martin, and A.-C. Camproux.
\newblock Exact distribution of a pattern in a set of random sequences
  generated by a markov source: applications to biological data.
\newblock \emph{Algorithms for Molecular Biology}, 5, 2010.

\bibitem[Pevzner et~al.(1989)Pevzner, Borodovski, and Mironov]{PBM89}
P.A. Pevzner, M.Y. Borodovski, and A.A. Mironov.
\newblock Linguistic of nucleotide sequences: The significance of deviation
  from mean statistical characteristics and prediction of frequencies of
  occurrence of words.
\newblock \emph{J. Biomol. Struct. Dyn.}, 6:\penalty0 1013--1026, 1989.

\bibitem[Prum et~al.(1995)Prum, Rodolphe, and de~Turckheim]{Pru95}
B.~Prum, F.~Rodolphe, and E.~de~Turckheim.
\newblock Finding words with unexpected frequencies in {DNA} sequences.
\newblock \emph{J. R. Statist. Soc. B}, 11:\penalty0 190--192, 1995.

\bibitem[Reignier(2000)]{reignier}
M.~Reignier.
\newblock A unified approach to word occurrences probabilities.
\newblock \emph{Discrete Applied Mathematics}, 104\penalty0 (1):\penalty0
  259--280, 2000.

\bibitem[Reinert and Schbath(1999)]{ReS99}
G.~Reinert and S.~Schbath.
\newblock Compound {Poisson} and {Poisson} process approximations for
  occurrences of multiple words in markov chains.
\newblock \emph{J. of Comp. Biol.}, 5:\penalty0 223--254, 1999.

\bibitem[Ribeca and Raineri(2008)]{ribeca}
P.~Ribeca and E.~Raineri.
\newblock {Faster exact Markovian probability functions for motif occurrences:
  a DFA-only approach}.
\newblock \emph{Bioinformatics}, 24\penalty0 (24):\penalty0 2839--2848, 2008.

\bibitem[Salvy(2009)]{Salvy:2009:cours7}
Bruno Salvy.
\newblock Solutions rationnelles de syst\`emes lin\'eaires \`a coefficients
  polynomiaux.
\newblock In \emph{Algorithmes en Calcul Formel et en Automatique}, chapter~7.
  2009.
\newblock \url{http://algo.inria.fr/chyzak/mpri/poly-20091201.pdf}.

\bibitem[Stefanov and Pakes(1997)]{StP97}
V.~Stefanov and A.~G. Pakes.
\newblock Explicit distributional results in pattern formation.
\newblock \emph{Ann. Appl. Probab.}, 7:\penalty0 666--678, 1997.

\bibitem[Stefanov and Szpankowski(2007)]{StS07}
V.~T. Stefanov and W.~Szpankowski.
\newblock {Waiting Time Distributions for Pattern Occurrence in a Constrained
  Sequence}.
\newblock \emph{Discrete Mathematics and Theoretical Computer Science},
  9\penalty0 (1):\penalty0 305--320, 2007.

\bibitem[Storjohann(2003)]{Storjohann:2003:jsc}
Arne Storjohann.
\newblock High-order lifting and integrality certification.
\newblock \emph{Journal of Symbolic Computation}, 36\penalty0 (3-4):\penalty0
  613--648, 2003.

\bibitem[van Helden et~al.(1998)van Helden, Andr\'e, and Collado-Vides]{Hel98}
J.~van Helden, B.~Andr\'e, and J.~Collado-Vides.
\newblock Extracting regulatory sites from the upstream region of yeast genes
  by computational analysis of oligonucleotide frequencies.
\newblock \emph{J. Mol. Biol.}, 281\penalty0 (5):\penalty0 827--842, 1998.

\end{thebibliography}

\end{document}